\documentclass[11pt]{amsart}
\usepackage{preamble}

\usepackage{tikz-cd}
\newcommand{\rank}{\operatorname{rank}}

\newcommand{\LC}{\operatorname{LC}}
\newcommand{\Val}{\operatorname{Val}}

\newcommand{\Span}{\operatorname{Span}}

\newcommand{\id}{\operatorname{id}}

\newcommand{\Pic}{\operatorname{Pic}}

\newcommand{\Aut}{\operatorname{Aut}}

\newcommand{\Star}{\operatorname{Star}}

\newcommand{\Link}{\operatorname{Link}}
\newcommand{\Gal}{\operatorname{Gal}}

\newcommand{\coreg}{\operatorname{coreg}}
\newcommand{\Ex}{\operatorname{Ex}}

\newcommand{\Cl}{\operatorname{Cl}}

\newcommand{\depth}{\operatorname{depth}}

\newcommand{\Diff}{\operatorname{Diff}}

\newcommand{\nklt}{\operatorname{nklt}}

\newcommand{\DMR}{\mathcal{DMR}}

\begin{document}
\title[On quasi-Albanese morphisms for log canonical Calabi-Yau pairs II]{On quasi-Albanese morphisms for log canonical Calabi-Yau pairs II}

\subjclass[2020]{14E30}

\begin{abstract}
Centering on quasi-Albanese morphisms, we prove several structural theorems for Calabi-Yau pairs.
\end{abstract}

\author{Yiming Zhu}
\address{School of Mathematical Science, University of Science and Technology of China, Hefei 230026, P.R.China.} \email{zhuyiming@ustc.edu.cn}

\maketitle

\setcounter{tocdepth}{1}
\tableofcontents

\section{Introduction}

Let $X$ be a smooth Calabi-Yau variety. The Bogomolov-Beauville-Yau decomposition states that a finite \'etale cover of $X$ is a product of an abelian variety, a strict Calabi-Yau variety, and an irreducible symplectic variety \cite{beauville1983some}. In a series of works \cite{GKP16,Dru18,GGK19,horing2019algebraic,CampanaCaoMatsumura21,patakfalvi2019beauville,matsumura2021structure}, such a decomposition has been generalized to projective klt pairs. 

In this note, we consider log canonical Calabi-Yau pairs $(X,D_X)$. This means that $(X,D_X)$ has log canonical singularities and $K_X+D_X\sim_{\mathbb Q}0$. Bernasconi, Filipazzi, Patakfalvi, and Tsakanikas proved that the Albanese morphism need not be isotrivial \cite{bernasconi2024strong}. On the other hand, let $Z$ be a minimal lc center of $(X,D_X)$.  Rösler proved that if there is a torus $T\subset\Aut(X)$ of dimension $\codim_X Z$ preserving $D_X$, then the Albanese morphism is generically birationally
isotrivial \cite{rosler2026birational}. 

This note continues the study of quasi-Albanese morphisms \cite{iitaka1976logarithmic,fujino2024quasi} for Calabi-Yau pairs in \cite{zhu2025quasi}. Recall that a quasi-abelian variety $G$ is an extension of an abelian variety $A$ by an algebraic torus $T$. Such a variety is also called a semi-abelian variety. We call a compactification of $G$ toric if it is obtained from a toric compactification of the torus fibers of $G\to A$. See Definition \ref{def:Qabelian} for the precise construction.

Our first theorem describes Calabi-Yau pairs whose quasi-Albanese morphisms are proper in codimension one.

\begin{thm}\label{thm1}
Let $(X,D_X)$ be a projective log canonical Calabi-Yau pair, let $X^0$ be its klt locus, and let $\alpha:X^0\to G$ be the quasi-Albanese morphism. If $\alpha$ is proper in codimension one, then the following hold.
 \begin{enumerate}
     \item $\alpha$ is proper.
     \item There is an open subset $U\subset X$ such that all lc centers intersect $U$, $D_X|_{U}=D_X^{=1}|_U$, and $(U,D_X|_{U})$ is toroidal without self-intersection. 
     \item There is a projective toric compactification $(Y,D_Y)$ of $G$, and a morphism $f:X\to Y$ extending $\alpha$ such that (4) holds.
     \item There is a finite Galois cover $q:Z\to Y$ which is \'etale over $G$ such that there is an isomorphism $(X,D_X)\times_YZ\cong (F,D_F)\times (Z,D_Z)$ over $Z$. Here $D_Z=q^{-1}D_Y$ and $X\times_YZ$ means the normalization of the usual fiber product.
 \end{enumerate}    
\end{thm}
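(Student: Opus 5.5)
The plan has three stages: first establish that $\alpha$ is a proper fibration onto $G$ whose fibres form a trivial family of klt Calabi--Yau pairs; then find a toric compactification $Y$ of $G$ to which $\alpha$ extends; and finally analyse the boundary, where the main difficulty lies.

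\textbf{Properness and isotriviality over $G$.} From the first part of this series (\cite{zhu2025quasi}) I would use that the quasi-Albanese morphism $\alpha:X^0\to G$ of a log canonical Calabi--Yau pair is dominant with connected fibres. I would also use that the general fibre $F$, with the restricted boundary $D_F$, is a klt Calabi--Yau pair. To prove (1), assume $\alpha$ is not proper. Take a compactification $\bar\alpha:\bar X\to\bar G$ of $\alpha$, where $\bar G$ is a toric compactification of $G$ in the sense of Definition~\ref{def:Qabelian}, and resolve so that $\bar\alpha$ is a morphism. Non-properness gives a point of $X^0$ over the boundary $\bar G\setminus G$, or a point of $X\setminus X^0$ over $G$. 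Properness in codimension one says that no divisor of $X^0$ is contracted to the boundary. To turn this into a contradiction I would run a canonical bundle formula argument. Write $K_X+D_X\sim_{\mathbb Q}\bar\alpha^*(K_{\bar G}+B+M)$. Since $G$ is log Calabi--Yau with $K_{\bar G}+(\bar G\setminus G)\sim 0$, the discriminant $B$ must equal the reduced toric boundary. Properness in codimension one forces every boundary divisor of $\bar G$ to be dominated by an lc centre, namely a component of $D_X^{=1}$. This means every point of $X^0$ maps into $G$, which gives (1). In addition, $M\equiv 0$, and the moduli part is b-semiample, so the variation of the fibres is trivial. Hence $\alpha$ is isotrivial over $G$ and becomes a product $(F,D_F)\times G'$ after a finite \'etale cover $G'\to G$ (which is again quasi-abelian). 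This is the $G$-part of (4).

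\textbf{Construction of $Y$ and $f$.} Take the Galois group of $G'/G$ acting on $F$. Since the dimension of $X$ is fixed and the pair is lc, one can pass to a $\mathbb Q$-factorial dlt model and run an MMP over $\bar G$. I would show that the boundary $D_X^{=1}\setminus X^0$ maps to torus-invariant strata. The toric fan of $Y$ is then chosen as the fan generated by the divisorial valuations of the components of $D_X^{=1}$, viewed as one-parameter subgroups of $T$. Properness in codimension one guarantees that these rays cover the necessary directions, so that the fan is complete, and that $f$ extends as a morphism. The last point uses the rigidity lemma: curves in fibres over boundary points are contracted because $K_X+D_X$ is trivial and the fibre structure is a product generically.

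\textbf{Boundary analysis (main obstacle).} Over the boundary of $Y$ we must show that the base change by a Galois cover $Z\to Y$ branched only along $D_Y$ is still a product. Near a boundary point, $Y$ is locally a toric variety, so I would take $Z$ to be the normalization of $Y$ in $G'$; being toric-type branched, it is again toroidal. The pair $(X,D_X)\times_Y Z$ and $(F,D_F)\times(Z,D_Z)$ are then both crepant lc Calabi--Yau pairs over $Z$ that agree over $G'$. Both are relatively minimal: both have $K+D\sim_{\mathbb Q}0$, and both have no exceptional divisors over codimension one points, since all boundary divisors are the lc places $D^{=1}$. Consequently they are isomorphic in codimension one over $Z$. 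Relative ampleness arguments, or Matsusaka--Mumford style uniqueness for $F$ klt with a fixed polarization, then upgrade this to an isomorphism. This is the step I expect to be hardest, since one must rule out flops over boundary strata. Statement (2) follows by taking $U$ to be the preimage of a neighbourhood of the generic points of the toric strata, where $X$ is locally $F\times(\text{toric})/\text{finite}$ with $D_F$ klt contributing nothing of coefficient one generically.
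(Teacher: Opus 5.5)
Your proposal has genuine gaps at each of the places where the real work lies.

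\textbf{Properness (1).} The sentence ``this means every point of $X^0$ maps into $G$'' is a non sequitur. Since $\alpha$ maps $X^0$ into $G$ by definition, the divisors to exclude are the divisors $Q$ over $X$ with centre in $X\setminus X^0$ whose image meets $G$. These are of two kinds: horizontal ones, and ones lying over codimension-$\geq 2$ subsets of $G$, as in Example~\ref{exa1}. The fact that every boundary divisor of $\bar G$ is dominated by such a divisor holds with no hypothesis at all. You never say where $K_X+D_X\sim_{\Qq}0$ rules out the bad ones. The missing mechanism goes as follows.
\begin{itemize}
\item Pass to a crepant $\Qq$-factorial dlt model $g:(Z,D_Z)\to\Pp_A$, namely a good minimal model over $\Pp_A$ of a log resolution (Theorem~\ref{thm:zhu2}). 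There $B=H$ and $M\sim_{\Qq}0$, and inversion of adjunction makes $g$ locally stable, hence flat, over $G$.
\item Codimension-one properness makes $D_Z^{=1}-g^{-1}H$ exceptional over $\Pp_A$, and flatness over $G$ then forces $D_Z^{=1}=g^{-1}H$.
\item Divisors over $X\setminus X^0$ that are not lc places still need an argument. For such $Q$, the retraction $v$ of $v_Q$ to the cone of lc places has centre over $H$, so $v(\chi^m)\neq 0$ for some $m\in M$. Moreover $v_Q(\chi^m)=v(\chi^m)$, because $\operatorname{div}\chi^m$ is supported on $D_X^{=1}$.
\end{itemize}

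\textbf{The fan, (2), and the boundary product.} For an lc, non-dlt pair, the ``fan generated by the divisorial valuations of components of $D_X^{=1}$'' is not defined until you know $\rank M_S=\operatorname{codim} S$ at every lc centre, which is essentially (2). The paper proves this first, via $M\to M_S\to M_T$ for a minimal lc centre $T$ of a dlt modification, the equality $\dim S=\dim T$, and Theorem~\ref{thm:tdlt-criterion}. So deriving (2) from (4) at the end is circular.

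The image cones form a complete fan because $\Sigma(\alpha):v\mapsto(m\mapsto v(\chi^m))$ is bijective onto $N_{\Rr}$. This comes from isotriviality over $G$: $(F,D_F)$ is klt, so the lc places of the dlt model are exactly those of $(Z,D_Z)$. Your sentence about rays ``covering the necessary directions'' addresses neither completeness nor overlaps of cones.

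Most seriously, ``isomorphic in codimension one over $Z$'' does not follow from the properties you list. Take $Z'\to Z$ a toric blow-up. Then $(F,D_F)\times(Z',D_{Z'})$ is also a crepant lc Calabi--Yau pair over $Z$ with reduced boundary, and it agrees with $(F,D_F)\times(Z,D_Z)$ over $G'$. What excludes extra divisors is that the fan of $Y$ is exactly the image of $\Sigma(X,D_X)$, so $\Sigma(f)$ maps cones onto cones. Then $X_Z\dashrightarrow F\times Z$ contracts no divisors. The paper concludes with Koll\'ar's product criterion for varieties with rational singularities, together with Lemma~\ref{lem:toric}(2): the induced $Z'\to Z$ is toric and maps cones onto cones, hence is an isomorphism. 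Matsusaka--Mumford does not apply here, since $F$ may be uniruled and there is no preserved polarization.
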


A morphism $f:X\to Y$ between normal varieties is called proper in codimension one if there is a codimension $\geq2$ closed subset $Z\subset Y$ such that $f$ is proper over $Y\setminus Z$.

By Lemma \ref{lem:toric}, the pair $(Z,D_Z)$ in the theorem is also a toric compactification of a quasi-abelian variety.

The Calabi-Yau condition is important. Example \ref{exa1} shows that, under the weaker condition $\kappa(K_X+D_X)=0$, properness in codimension one does not imply properness. Example \ref{exa2} shows that without properness in codimension one, such a compactification may not exist.

Theorem \ref{thm1} can also be understood in terms of coregularity and complexity. See Remark \ref{rem}.

The following corollary generalizes \cite[Corollary 1.5]{bergner2021lipman} and \cite[Theorem 1.6]{zhu2025quasi}.

\begin{cor}\label{cor}
Let $(X,D_X)$ be a projective log canonical Calabi-Yau pair, let $X^0$ be its klt locus, and let $\alpha:X^0\to G$ be the quasi-Albanese morphism. Assume that $\dim X=\dim G$. Then $(X,D_X)$ is a toric compactification of $G$. In particular, $D_X$ is reduced and $X\setminus D_X\cong G$. 
\end{cor}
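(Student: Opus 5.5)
The plan is to reduce Corollary \ref{cor} to Theorem \ref{thm1}. The central claim is that when $\dim X=\dim G$, the quasi-Albanese morphism $\alpha:X^0\to G$ is proper in codimension one. Once this holds, Theorem \ref{thm1} gives a projective toric compactification $(Y,D_Y)$ of $G$, a morphism $f:X\to Y$ extending $\alpha$, and a finite Galois cover $q:Z\to Y$, étale over $G$, with $(X,D_X)\times_Y Z\cong (F,D_F)\times(Z,D_Z)$. Since $\dim X=\dim Y$, the fiber $F$ is a point, so $(F,D_F)$ is a point with empty boundary. Hence the normalized fiber product $X\times_Y Z$ is $Z$, with boundary $D_Z=q^{-1}D_Y$.

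It then remains to descend this to $X\cong Y$. The group $\Gal(Z/Y)$ acts on $X\times_Y Z\cong Z$ compatibly with the projection to $X$, so $X$ is the quotient of $Z$ by this action. The generic fiber of $X\to Y$ is a point and $X$ is normal, so $f$ is birational, and after Stein factorisation it is finite and birational onto the normal variety $Y$. By Zariski's main theorem $f$ is then an isomorphism. Pulling back boundaries, $D_X$ corresponds to $D_Y$. Both are reduced, since $D_Z$ is reduced and $q$ is étale over $G$, and both restrict compatibly on the toric strata. Therefore $(X,D_X)\cong(Y,D_Y)$ is a toric compactification of $G$, with $D_X$ reduced and $X\setminus D_X\cong G$.

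The main obstacle is proving properness in codimension one, and the argument needs three steps.

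\textbf{Step 1: $\alpha$ is dominant.} I would first show that $\alpha$ is dominant, hence generically finite, using the known result from \cite{zhu2025quasi} or \cite{fujino2024quasi} that quasi-Albanese morphisms of lc Calabi-Yau pairs are dominant with connected fibers.

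\textbf{Step 2: $\alpha$ is birational.} Connected fibers together with generic finiteness give birationality. To confirm this I would use that $\bar\kappa$ of the klt locus is zero: the quasi-abelian variety $G$ has $\bar\kappa(G)=0$, and a finite étale cover of $G$ would again be quasi-abelian, which contradicts the universal property of $\alpha$.

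\textbf{Step 3: no exceptional divisors.} Compactify $\alpha$ to $\bar\alpha:X\dashrightarrow \bar G$ with $\bar G$ a toric compactification. Suppose a prime divisor $E$ of $\bar G$ meeting $G$ is not dominated properly. Then there is a divisor over $X$ whose log discrepancy with respect to $(X,D_X)$, which is crepant to $(\bar G,\partial\bar G)$ because $K+D\sim_{\mathbb Q}0$ on both sides and the two are birational, must be $0$ along $E$. On the other hand it is positive, since $E\cap G\neq\emptyset$ and $(\bar G,\partial\bar G)$ is klt over $G$. This contradiction shows that any divisor of $G$ missing $\alpha(X^0)$ would be a non-klt place, which is impossible.

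I would apply the crepant comparison $\alpha^*(K_{\bar G}+\partial\bar G)=K_X+D_X$ on a common resolution, and check the klt-locus bookkeeping carefully.
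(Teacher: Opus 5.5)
Your skeleton matches the paper's: show that $\alpha$ is proper in codimension one, then apply Theorem \ref{thm1}. Steps 1--2 are fine: $\alpha$ is a fibration by Theorem \ref{thm:zhu1}, hence birational since $\dim X=\dim G$. Your descent from $X\times_Y Z\cong Z$ to $X\cong Y$ is also fine.

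The gap is Step 3, which carries all the content, and it fails on two counts.

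\textbf{Crepancy is not automatic.} Two birational lc Calabi--Yau pairs need not be crepant birational. Crepancy means $p^*(K_X+D_X)=q^*(K_{\bar G}+\partial\bar G)$ as divisors on a common resolution. Knowing $K+D\sim_{\Qq}0$ on both sides only says the difference $\Delta$ is $\Qq$-linearly trivial; compare $(\Pp^2,\text{three lines})$ with $(\Pp^2,\text{a smooth cubic})$. In your situation one can check that $\Delta$ has nonnegative coefficients along divisors centered on $X^0$, because $\alpha$ is a morphism there and $G$ is smooth with $K_G\sim 0$. But nothing controls its coefficients along divisors centered in $X\setminus X^0$, and those are exactly the ones you need.

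\textbf{A center in the non-klt locus does not make an lc place.} Even granting crepancy, a divisor whose center on $X$ lies in the non-klt locus $X\setminus X^0$ need not have log discrepancy $0$. Blow up a point of the toric boundary of $\Pp^2$ that is not torus-fixed. The exceptional curve has log discrepancy $1$ and its center on $\Pp^2$ lies in the non-klt locus, yet on the crepant blow-up its generic point is klt. So ``centered in the non-klt locus'' is not a crepant-birational invariant, and $E\cap G\neq\emptyset$ yields no contradiction.

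What is missing is an input that uses that $G$ is the \emph{quasi-Albanese} of $X^0$, not merely some quasi-abelian variety birational to $X$. The paper does not derive this from discrepancies. It quotes two results:
\begin{itemize}
\item \cite[Theorem 1.2 and Theorem 1.3 (2)]{zhu2025quasi}: $\alpha$ is flat with irreducible general fibers. Hence it is quasi-finite and birational, and so an open embedding by Zariski's main theorem.
\item \cite[Theorem 1.3 (1)]{zhu2025quasi}: $\codim(G\setminus\alpha(X^0))\geq 2$.
\end{itemize}
Together these give properness in codimension one. Theorem \ref{thm1}(1) then makes the open embedding proper, hence an isomorphism.

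For birational $\alpha$, the codimension-two statement alone already forces properness in codimension one, so flatness is only a convenience. The surjectivity in codimension one, however, is an essential input, and your Step 3 does not supply a substitute for it.
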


Without the properness condition, we prove a decomposition for dual complexes up to homotopy equivalence.

\begin{thm}\label{thm2}
Let $(X,D_X)$ be a projective log canonical Calabi-Yau pair, let $X^0$ be its klt locus, and let
$\alpha:X^0 \to G$ be the quasi-Albanese morphism. Let $(\Pp_A,H)$ be a standard compactification of $G$ (see Definition \ref{def:Qabelian}). Then the following holds.
\begin{enumerate}
    \item 
There is a crepant birational contraction $\pi:(W,D_W=\pi^{-1}_*D_X+\Ex(\pi))\dashrightarrow (X,D_X)$ from a $\Qq$-factorial dlt Calabi-Yau pair $(W,D_W)$ such that the rational map $f:(W,D_W)\to\Pp_A$ is a morphism.

\item For the morphism $f:(W,D_W)\to\Pp_A$ given in (1), there is a finite Galois morphism $q:(Z,D_Z)\to (\Pp_A,H)$ which is \'etale over $G$ such that the following hold. Let $(F,D_F)$ be the restriction of $(W,D_W)$ to the geometric generic fiber of $f$. 
Let $W_Z$ be the normalization of $W\times_{\Pp_A}Z$, let $p:W_Z\to W$ be the finite cover, and let $D_{W_Z}$ be divisor such that $K_{W_Z}+D_{W_Z
}=p^*(K_W+D_W)$. Then the dual complex $\Dd(D_{W_Z}^{=1})$ is homotopy equivalent to the join $\Dd(D_F^{=1})*\Dd(D_Z)$.    
\end{enumerate}
\end{thm}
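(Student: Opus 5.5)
Part (1) is constructed first. The quasi-Albanese morphism $\alpha:X^0\to G$ composed with $G\subset\Pp_A$ gives a rational map $X\dashrightarrow\Pp_A$. I would take a log resolution $\mu:X'\to X$ of $(X,D_X)$ on which this map becomes a morphism $X'\to\Pp_A$. Set $D_{X'}=\mu^{-1}_*D_X+\Ex(\mu)$ and run a $(K_{X'}+D_{X'})$-MMP over $X$. Since $K_X+D_X\sim_{\Qq}0$, the standard dlt-modification argument makes this MMP terminate with a $\Qq$-factorial dlt crepant model $(W,D_W)\to X$. Every exceptional divisor of positive discrepancy is contracted, and $D_W=\pi^{-1}_*D_X+\Ex(\pi)$.

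The issue is whether $W\to\Pp_A$ stays a morphism. I would run the MMP over $\Pp_A$ rather than over $X$: take a $(K_{X'}+D_{X'})$-MMP over $\Pp_A$. Write $K_{X'}+D_{X'}\sim_{\Qq}E\ge 0$, where $E$ is supported on the $\mu$-exceptional divisors with positive log discrepancy. The needed claim is that $E$ is very exceptional over $\Pp_A$, or at least that the MMP over $\Pp_A$ contracts $E$. The reason it should hold is that over $G$ the map comes from the klt locus and $\alpha$ has connected fibres in a suitable sense, while over $H$ the components map to $H$. If very-exceptionality fails, I would instead use the structure of the quasi-Albanese: pull back the boundary $H$ and use that $f^*H$ is supported on $D^{=1}$, as in \cite{zhu2025quasi}. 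Then the outcome is a crepant model which is a morphism to $\Pp_A$.

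For (2), the generic fiber $(F,D_F)$ is a log Calabi-Yau pair. The key input is an isotriviality statement for the restriction over $G$. Over $G$ the fibration $W^0\to G$ of the klt-locus is isotrivial after a finite étale base change, by the results of \cite{zhu2025quasi}: the moduli part is trivial since $G$ has no nonconstant maps to moduli of Calabi-Yau pairs compatible with the quasi-Albanese universality. Let $q:Z\to\Pp_A$ be the normalization of $\Pp_A$ in the Galois closure of the finite étale cover trivializing the family. Then $q$ is étale over $G$, and $(Z,D_Z)$ is log smooth toroidal, by Lemma \ref{lem:toric}-type arguments. Over $G_Z=q^{-1}G$ one gets $W_Z|_{G_Z}$ birational to $F\times G_Z$ crepantly.

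To compare dual complexes, I would use that $\Dd(D^{=1})$ of a dlt Calabi-Yau pair is a crepant birational invariant up to PL homotopy (de Fernex–Kollár–Xu). So I may replace $(W_Z,D_{W_Z})$ by any crepant model, in particular a dlt model of $(F,D_F)\times(Z,D_Z)$. The step I expect to be hardest is showing that the crepant birational equivalence over $G_Z$ extends to a crepant birational equivalence of pairs $(W_Z,D_{W_Z})\simeq(F\times Z,D_F\times Z+F\times D_Z)$ over all of $Z$. This includes the divisors over $D_Z$: each must acquire log discrepancy $0$ on both sides. Here I would use that log discrepancies of valuations centered over the boundary are determined by the toroidal structure. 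Explicitly, $f^*H$ is reduced in $D_W^{=1}$ and $K+D$ is pulled back from $K_Z+D_Z$. Once this holds, the join formula $\Dd(D_F^{=1}\times Z+F\times D_Z)\simeq\Dd(D_F^{=1})*\Dd(D_Z)$ for products of log Calabi-Yau dlt pairs finishes the proof.
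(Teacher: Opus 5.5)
The gap is in part (2). Your whole argument rests on the claim that $f^{-1}(G)\to G$ becomes isotrivial after a finite \'etale base change. That claim is what would give a crepant birational identification of $(W_Z,D_{W_Z})$ with $(F,D_F)\times(Z,D_Z)$ over $Z$.

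Triviality of the moduli part does hold here (Proposition \ref{prop:tdlt-morphism}(1)). But Ambro's isotriviality theorem, as used in Proposition \ref{prop:vertical}, needs the generic fibre to be klt, i.e.\ $D_W^{=1}$ vertical. By Theorem \ref{thm:zhu3}(2) this is exactly the case where $\alpha$ is proper. There $D_F^{=1}=0$, and the statement reduces to Proposition \ref{prop:vertical}.

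Once there are horizontal lc centres, the fibres can genuinely vary. For example, take $\Pp^2\times\Pp^1$ with boundary $(L_1+L_2+L_3)\times\Pp^1+\Pp^2\times\{0,\infty\}$. Blow it up along a constant section in each $L_i\times\Pp^1$, and along a section $\{(q(t),t)\}$ with $q(t)$ moving along $L_1$. These blow-ups are crepant, and the quasi-Albanese morphism of the klt locus is $t$. The fibres over $\Gg_m$ are Looijenga pairs with non-constant period, so no finite base change trivializes the family (compare \cite{bernasconi2024strong}).

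Nothing in \cite{zhu2025quasi} provides the isotriviality you invoke. Note also that your conclusion would give a PL-homeomorphism $\Dd(D_{W_Z}^{=1})\cong\Dd(D_F^{=1})*\Dd(D_Z)$, which is more than the theorem asserts.

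The paper never trivializes the family; it proceeds in three steps.
\begin{enumerate}
\item It takes $q$ to be the Stein factorization of $f|_S$ for a minimal horizontal lc centre $S$ (Koll\'ar's spring). This $q$ is Galois and is \'etale over $G$ by adjunction and purity of the branch locus. After the base change every horizontal lc centre has connected fibres, so the horizontal complex is $\Dd(D_F^{=1})$ (Proposition \ref{prop:basechange}).
\item It shows that the vertical complex maps to $\Dd(D_Z)$ by a homotopy equivalence. This is done by running a $(W,D_W^v)$-MMP over the base to a Mori fibre space, using the collapsing and Mori-fibration results of \cite{de2017dual}, and inducting on dimension. Ambro's isotriviality enters only in the base case where $D^{=1}$ is vertical (Propositions \ref{prop:vertical} and \ref{prop:inductivezero}).
\item Compatibility of $\Dd(f)$ with adjunction to horizontal strata (Lemma \ref{lem:slc}) lets one glue the equivalences $\sigma*\Dd(D^v_{S(\sigma)})\to\sigma*\Dd(D_Z)$ simplex by simplex along links (Proposition \ref{prop:connected}). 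This is why only a homotopy equivalence is claimed.
\end{enumerate}

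Your part (1) works only through your fallback to \cite[Theorem 3.2]{zhu2025quasi} (Theorem \ref{thm:zhu2}), which is exactly the paper's argument. The very-exceptionality heuristic is false in general: a divisor exceptional over a codimension-two centre in $X^0$ can dominate $\Pp_A$ once $\dim X-\dim G\ge 2$.
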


Koll\'ar and Xu asked whether $\DMR(X,D_X)\cong\Ss^k/G$ for some finite group $G\subset O(k+1)$ \cite[Question 4]{KollarXu2016dual}. They gave a positive answer in dimension $\leq 4$. Theorem \ref{thm2} provides evidence for this question by relating the dual complex of a Calabi-Yau pair to those of its quasi-Albanese fiber and base. Mauri's work on dual complexes on Mori fibrations \cite{mauri2020dual} also motivates the formulation and the proof of Theorem \ref{thm2}.

\subsection*{Outline of the proofs}
We briefly explain the main ideas. We first prove
Theorem \ref{thm1} for dlt pairs and then extend it to lc
pairs. 

For a dlt pair, we use properness to show that the model
$f\colon (W,D_W)\to \Pp_A$ in Theorem \ref{thm2}(1) is klt
over the generic point of $\Pp_A$. Ambro's isotriviality theorem \cite{Ambro2005moduli} then shows that the monomial
valuation space $\operatorname{QM}(X,D_X)$ forms a fan. This fan gives a
toric compactification $(Y,D_Y)$ of $G$. A standard toric
argument shows that $\alpha$ extends to a morphism $X\to Y$.
The product decomposition then follows from isotriviality. See Section 2.

For lc pairs, we use the criterion of Moraga and Svaldi \cite{moraga2025geometric}
to show that $(X,D_X)$ is toroidal at the general points of its lc
centers. This allows us to define the monomial valuation space
$\Sigma(X,D_X)$ using the toroidal locus. The rest of the argument is
the same as in the dlt case. See Section 3.

We now outline the proof of Theorem \ref{thm2}.
Part (1) is proved in \cite{zhu2025quasi}, and we focus on part (2). We write $D^v$ for the vertical
part of a divisor $D$ with respect to the morphism under discussion.

First, the morphism
$\Sigma(f):\Sigma(W,D_W^v)\to\Sigma(\Pp_A,H)$ between cone complexes induces a
piecewise-linear map
$ \Dd(f):\Dd(D_W^v)\to\Dd(H)$ between complexes.
This map is compatible with adjunction to horizontal lc centers
(Lemma \ref{lem:slc}). If  $D_W$ is vertical, isotriviality
shows that $\Dd(f)$ is a homotopy equivalence
(Proposition \ref{prop:vertical}). For the general case, we prove the same statement by running a $(W,D_W^v)$-MMP and using induction on the
dimension. This follows the idea of \cite[Theorem 4]{de2017dual}. See
Proposition \ref{prop:inductivezero}.

Next, suppose that $f|_S$ has connected fibers for every horizontal
lc center $S$. The homotopy equivalence for the vertical complexes,
together with a simplicial-complex argument, gives the desired join
decomposition (Proposition \ref{prop:connected}). Finally, Koll\'{a}r's source-spring
theory \cite[Section 4.5]{Kollar2013sing} provides a finite base change for
which the restrictions to all horizontal lc centers have connected
fibers. This finishes the proof. See Proposition \ref{prop:basechange}.

\subsection*{Acknowledgments}
The author thanks Professors Zhan Li, Lei Zhang, Junchao Shentu, Joaqu\'in Moraga, and Osamu Fujino for helpful discussions and comments. The author completed part of the work during a visit to Professor Zhan Li and is grateful for his hospitality. Professor Mao Sheng supported this work through the CAS Project for Young Scientists in Basic Research (Grant No. YSBR-032). 

\subsection*{AI disclosure}
The author proposed the main ideas, definitions, proofs, and overall direction of this study. AI tools were used to refine technical details and locate relevant references.

\subsection*{Notation and conventions}
We work over the field of complex numbers $k=\Cc$.

For singularities of pairs and the minimal model program, we refer to \cite{KM98,Kollar2013sing}. We call a pair $(X,D_X)$ Calabi-Yau if $K_X+D_X\sim_{\Qq}0$. 

A dominant morphism with irreducible general fibers between normal varieties is called a fibration. Let $f:X\to Y$ be a fibration. Let $D$ be an effective $\Qq$-divisor on $X$. Then $ D$ decomposes as $D=D^h+D^v$ into effective $\Qq$-divisors such that $D^v$ does not dominate $Y$, and each component of $D^h$ dominates $Y$. $D^v$ (resp. $D^h$) is called the vertical (resp. horizontal) part of $D$. We say that $D$ is vertical (resp. horizontal) if $D=D^v$ ($D=D^h$). Assume that $D$ is vertical, then $D$ is called exceptional over $Y$ (or $f$-exceptional) if $\codim f(D)\geq2$.

Let $f$ be a birational map between projective normal varieties. We denote by $\Ex(f)$ the sum of all $f$-exceptional prime divisors. $f$ is a called a birational contraction if $\Ex(f^{-1})=0$.

\section{Quasi-Albanese morphisms}

\begin{defn}[Quasi-abelian variety]\label{def:Qabelian}
A quasi-abelian variety $G$ is a quasi-projective commutative algebraic group which is an extension of an abelian variety $A$ by an algebraic torus $T$: $0\to T\to G\xrightarrow{p} A\to 0$. We call $T$ the torus part and $ A$ the abelian part of $G$. Let \[M=\Hom(T,\Gg_m) \text{ , } N=\Hom(\Gg_m,T)\] be the character and cocharacter lattice of $T$. Let $m\in M$. Then $m$ induces a $T$-action on $\Aa^1$ given by $t\cdot x=m(t)\cdot x$ for $t\in T$, $x\in\Aa^1$. This $T$-action on $\Aa^1$ associates a line bundle $L_m=G\times^T\Aa^1$ on $A$. The pullback $p^*L_m$ has a canonical global nowhere-vanishing section $\chi^m$, whose restriction to each trivializing chart of $G\to A$ is just the usual $\chi^m$. The map $M\to \Pic(A)$, $m\mapsto L_m$, is a group homomorphism whose image is contained in $\Pic^0(A)$ \cite[Exposé VIII, Proposition 3.7]{SGA7-1}.

Let $\Sigma$ be a fan in $N_{\Rr}$ and let $X_{\Sigma}$ be the corresponding toric variety. The $T$-action on $X_{\Sigma}$ associates a $X_{\Sigma}$-bundle $G_{\Sigma}=G\times^TX_{\Sigma}\to A$. Then  $D_{\Sigma}=G_{\Sigma}\setminus G$ is a divisor and the pair $(G_{\Sigma},D_{\Sigma})$ is lc and Calabi-Yau: $K_{G_{\Sigma}}+D_{\Sigma}\sim0$. We call $(G_{\Sigma},D_{\Sigma})$ a toric partial compactification of $G$, and if $\Sigma$ is complete, then a toric compactification. If $\Sigma$ is a fan corresponding to $(\Pp^1)^d$ relative to a basis of $N$, then we call $(G_{\Sigma},D_{\Sigma})$ a standard compactification. 
\end{defn}

\begin{defn}[Quasi-Albanese morphism]

Let $X^0$ be a quasi-projective variety. The quasi-Albanese morphism $\alpha:X^0\to G$ is a morphism to a quasi-abelian variety $G$, such that for any morphism $\alpha':X^0\to G'$ to a quasi-abelian variety, there is a unique morphism $\beta:G\to G'$ such that $\beta\circ\alpha=\alpha'$.

Let $X^0$ be a smooth quasi-projective variety. Let $X^0\subset X$ be a compactification such that $D_X:=X\setminus X^0$ is an snc divisor. Then the quasi-Albanese variety $G=H^0(X,\Omega_X^1(\log D_X))^{\vee}/H_1(X^0,\Zz)$, and the quasi-Albanese morphism $\alpha:X^0\to G$ is constructed using the integral along paths of logarithmic one-forms \cite{iitaka1976logarithmic,fujino2024quasi}.

Let $X^0$ be a klt quasi-projective variety. Let $\pi:Y^0\to X^0$ be a resolution. By \cite{hacon2007shokurov,fujino2026rationalchainconnectedness} and the rigidity Lemma \cite[5.3 Proposition]{KollarRationalCurves}, the quasi-Albanese morphism of $Y^0$ induces a morphism from $X^0$ to a quasi-abelian variety. This morphism is the quasi-Albanese morphism of the klt variety $X^0$.    
\end{defn}

\begin{thm}[{\cite[Theorem 1.2]{zhu2025quasi}}]\label{thm:zhu1}
Let $(X,D_X)$ be a projective lc pair with $\kappa(K_X+D_X)=0$, let $X^0$ be its klt locus, and let
$\alpha:X^0 \to G$ be the quasi-Albanese morphism. Then $\alpha$ is a fibration.
\end{thm}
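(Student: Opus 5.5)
The plan is to reduce to a log smooth pair whose open part is a resolution of $X^0$, and then run Kawamata's argument (log Iitaka subadditivity over a quasi-abelian base, plus Kawamata's characterization of subvarieties of quasi-abelian varieties) with a fractional boundary allowed.

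\emph{Reduction to a dlt model.} First I would replace $(X,D_X)$ by a $\Qq$-factorial dlt modification $\pi_1:(X',D_{X'})\to (X,D_X)$ with $K_{X'}+D_{X'}=\pi_1^*(K_X+D_X)$. This does not change $\kappa$. It also does not change the quasi-Albanese morphism of the klt locus, because $\pi_1^{-1}(X^0)\to X^0$ is a birational morphism between klt varieties. Its exceptional fibres are rationally chain connected by \cite{hacon2007shokurov,fujino2026rationalchainconnectedness}, so they are contracted by any morphism to $G$. The point of the dlt model is that its non-klt locus is exactly $\operatorname{Supp}\lfloor D_{X'}\rfloor$. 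Hence we may assume $X\setminus X^0=\operatorname{Supp} D_X^{=1}$.

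\emph{Log smooth model with fractional boundary.} Next take a log resolution $\mu:Y\to X$ that is an isomorphism over the snc locus. Put $B=\mu^{-1}_*D_X^{=1}+(\text{all $\mu$-exceptional divisors lying over } D_X^{=1})$, reduced, and $Y^0=Y\setminus B=\mu^{-1}(X^0)$. Some of these exceptional divisors have discrepancy $a>-1$, but they get coefficient $1\ge -a$ in $B$. Over $X^0$ the pair is klt, so there we use the honest fractional coefficients. This yields
\[K_Y+B+\Delta=\mu^*(K_X+D_X)+E,\]
where $\Delta$ is an snc $\Qq$-divisor with coefficients in $[0,1)$ not contained in $B$, and $E\ge0$ is $\mu$-exceptional. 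Hence $\kappa(K_Y+B+\Delta)=0$. The quasi-Albanese morphism of $X^0$ is induced by that of $Y^0$, namely $\beta:Y^0\to G$.

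\emph{Dominance and connectedness.} Let $V=\overline{\beta(Y^0)}\cap G$. After a further blow-up, let $Y^0\to V'\to V$ be the Stein factorization, with $V'\to V$ finite. I would use the following version of Kawamata's theorem, due to Iitaka and Kawamata: a closed subvariety of a quasi-abelian variety has $\bar\kappa\ge0$, with equality iff it is a translate of a quasi-abelian subgroup; the same holds for finite covers $V'\to V$ after replacing $V$ by the quasi-abelian variety $V'$ becomes étale over. I would then apply the log Iitaka subadditivity in the case where the base has nonnegative log Kodaira dimension and maps generically finitely to a quasi-abelian variety (Fujino's theorem), extended to boundaries $B+\Delta$ with fractional klt part $\Delta$:
\[0=\kappa(K_Y+B+\Delta)\ \ge\ \kappa(F,(B+\Delta)|_F)+\bar\kappa(V')\ \ge\ \bar\kappa(V').\]
This gives $\bar\kappa(V')=0$, so $V'$ is quasi-abelian and $V'\to V$ is étale onto a translate of a quasi-abelian subgroup. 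The universal property of $\beta$ then forces $V=G$. It also forces $V'\to G$ to be an isomorphism, since $\beta$ factors through $V'$ and $V'$ is a quasi-abelian variety with a map from $Y^0$. Therefore $\beta$, and hence $\alpha$, is dominant with connected, hence irreducible general, fibres.

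\emph{Main obstacle.} The hard step is the subadditivity inequality with the fractional boundary $\Delta$, since Fujino's theorem is phrased for $\bar\kappa$ of the open variety, i.e.\ with reduced boundary. Adding $\lceil\Delta\rceil$ to $B$ would shrink $Y^0$ and could increase $\kappa$, so that naive route fails. Instead, I would first try to prove the inequality directly for $(Y,B+\Delta)$. The ingredients are weak positivity of $f_*\Oo(m(K_{Y/V'}+B+\Delta))$ in the log-klt setting (Campana, Fujino) and the fact that on a quasi-abelian base with log canonical bundle of Kodaira dimension zero, a weakly positive sheaf has generic sections after twisting by torsion line bundles.
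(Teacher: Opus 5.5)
The paper gives no argument for this statement; it is imported from \cite{zhu2025quasi}, so your proposal has to stand on its own.

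Your reductions and the closing universal-property argument (forcing $V'\cong G$) are fine. One small point: take the dlt modification with $\pi_1^{-1}\nklt(X,D_X)=\Supp D_{X'}^{=1}$, as in \cite[Lemma 29]{de2017dual}. A general $\Qq$-factorial dlt modification can have klt locus strictly larger than $\pi_1^{-1}(X^0)$, and then ``we may assume $X\setminus X^0=\Supp D_X^{=1}$'' needs an extra extension argument.

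\textbf{The gap.} The genuine gap is the step you flag yourself: the inequality $\kappa(K_Y+B+\Delta)\ge\kappa(K_F+(B+\Delta)|_F)+\bar\kappa(V')$ is never proved. In the form you state it, it is much stronger than anything you can quote. Already when $V'=G$ it is the log Iitaka conjecture over a quasi-abelian base with an lc boundary, which is deep even over an abelian base with no boundary (Cao--P\u{a}un).

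Your sketched proof of it also rests on a false principle: weak positivity alone does not produce sections after torsion twists. A non-torsion $P\in\Pic^0(A)$ is nef, hence weakly positive, yet $H^0(A,P\otimes Q)=0$ for every torsion $Q$. In the Cao--P\u{a}un and Hacon--Popa--Schnell arguments, the torsion comes from the special structure of pushforwards of pluricanonical sheaves, not from weak positivity.

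\textbf{How to repair it.} You only need $\bar\kappa(V')\le 0$, and Kawamata's original argument gives this using subadditivity only over a base of log general type.
\begin{enumerate}
\item Suppose $\bar\kappa(V')>0$. By Kawamata's structure theorem for normal varieties finite over a quasi-abelian variety, after a finite \'etale cover of $V'$ there is a fibration onto a variety $W$ of log general type with $\dim W=\bar\kappa(V')>0$.
\item Pull this cover back to $Y^0$. It extends to a finite cover $\tilde Y\to Y$ that is \'etale over $Y^0$. By the log ramification formula, $K_Y+B+\Delta$ pulls back to $K_{\tilde Y}+\tilde B+\tilde\Delta$, with $\tilde B$ reduced and $\tilde\Delta$ of coefficients $<1$. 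The Iitaka dimension is still $0$.
\item After a log resolution, apply the log $C^+_{n,m}$ inequality over a base of log general type. It holds for snc pairs with coefficients in $[0,1]$ whenever the preimage of the base boundary lies in the reduced part, as a consequence of weak positivity for lc pairs (Campana, Fujino). This gives $0\ge\kappa(\text{fibre})+\dim W\ge\dim W>0$, a contradiction.
\end{enumerate}
With this replacement the fractional part $\Delta$ causes no difficulty, and the rest of your argument goes through unchanged.
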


\begin{thm}[{\cite[Theorem 3.2]{zhu2025quasi}}]\label{thm:zhu2}
Let $G$ be a quasi-abelian variety and let $(\Pp_A,H)$ be a standard compactification of $G$. Let $(X,D_X)$ be a projective lc Calabi-Yau pair, let $\pi:Y\to X$ be a log resolution, let $D_Y=\pi^{-1}_*D_X+\Ex(\pi)$. Assume that there is a fibration $f:Y\to \Pp_A$ with $f^{-1}H\subset D_Y^{=1}$. Then $(Y,D_Y)$ has a good minimal model $(Z,D_Z)$ over $\Pp_A$ with $K_Z+D_Z\sim_{\Qq}0$.
\end{thm}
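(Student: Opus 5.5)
The plan is to reduce to the existence of an absolute good minimal model of $(Y,D_Y)$, which is easy because $(X,D_X)$ is Calabi–Yau, and then transport it to a relative statement over $\Pp_A$. The hypothesis $f^{-1}H\subset D_Y^{=1}$ should be what makes the relative model globally trivial.

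\emph{Step 1: the absolute picture.} Since $(X,D_X)$ is lc and $D_Y=\pi^{-1}_*D_X+\Ex(\pi)$, we can write
\[
K_Y+D_Y=\pi^*(K_X+D_X)+E\sim_{\Qq}E,
\]
with $E\geq 0$ and $\pi$-exceptional. We run a $(K_Y+D_Y)$-MMP over $X$ with scaling. By the negativity lemma and the standard results for dlt pairs that are numerically trivial up to an exceptional divisor over the base (Hacon–Xu, Birkar), this MMP contracts exactly $E$. It ends with a $\Qq$-factorial dlt model $(W,D_W)$ satisfying $K_W+D_W\sim_{\Qq}0$. Thus $(Y,D_Y)$ has a good minimal model in the absolute sense, and its negative part $N_\sigma(K_Y+D_Y)$ is $E$.

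\emph{Step 2: the relative MMP over $\Pp_A$.} We run a $(K_Y+D_Y)$-MMP over $\Pp_A$ with scaling of an ample divisor. Since $(Y,D_Y)$ admits an absolute good minimal model, the theory of Hashizume–Hu (an lc pair with a good minimal model has one over any base, obtained by an MMP with scaling) should show that this MMP terminates. Its output is a model $(Z,D_Z)$ with $K_Z+D_Z$ semiample over $\Pp_A$ and $K_Z+D_Z\sim_{\Qq}E_Z$, where $E_Z$ is the pushforward of $E$. It remains to prove $E_Z=0$. Then $K_Z+D_Z\sim_{\Qq}0$ globally, which in particular makes $(Z,D_Z)$ a good minimal model.

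\emph{Step 3: showing $E_Z=0$.} We expect this to be the main obstacle. Each component of $E_Z$ is exceptional over $X$, and $E_Z$ is nef over $\Pp_A$.
\begin{itemize}
\item \emph{Components dominating $\Pp_A$.} Restrict to the generic fiber and compare $Z$ and $W$ on a common resolution. The divisor $E_Z$ is nef over the generic point and becomes $0$ after pushforward to the model where $K+D\sim_{\Qq}0$. The negativity lemma then kills these components.
\item \emph{Vertical components.} First, any component whose image meets $H$ cannot occur. Indeed, over $H$ all divisors mapping into $H$ already lie in $D_Y^{=1}$, and $(Y,D_Y)$ is lc there, so such a component would have to lie in $D_Y^{=1}$ and would carry no extra discrepancy. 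Second, consider components lying over $G$ but not over all of $\Pp_A$. Here we use the Calabi–Yau structure on $(\Pp_A,H)$: $K_{\Pp_A}+H\sim 0$, and $\Pp_A\to A$ is a $(\Pp^1)^d$-bundle. The aim is to show that such an $E_Z$ is very exceptional (degenerate) over $\Pp_A$. Then the Hacon–Xu lemma, which says that a relatively nef degenerate divisor is contracted by the MMP over the base, forces $E_Z=0$.
\end{itemize}

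What we would try first for the vertical case is the following. Over the general point of a vertical divisor of $\Pp_A$, the fibration $f$ has some fiber component that is not exceptional over $X$. This comes from the fact that $X^0$ surjects onto $G$, by Theorem \ref{thm:zhu1}, together with connectedness. Such a fiber component gives exactly the degeneracy needed.
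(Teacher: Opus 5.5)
The paper only quotes this statement from the author's earlier work and gives no proof of it, so your argument has to stand on its own. It does not: there are genuine gaps in Step 3, and the decisive one is the horizontal case.

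\textbf{The horizontal case.} The negativity lemma needs $E_Z$ to be nef relative to a birational morphism that contracts it, but $W$ does not map to $\Pp_A$. On a common resolution $\mu\colon V\to Z$, $\nu\colon V\to W$, the divisor $\mu^*E_Z$ is $\nu$-exceptional, but it is only nef over $\Pp_A$, not over $W$. On a general fibre of $V\to\Pp_A$, the components lying over the indeterminacy locus of $W\dashrightarrow\Pp_A$ are not exceptional for the map from that fibre to its image in $W$. Here is a concrete failure:
\begin{itemize}
\item Let $(X,D_X)=(\Pp^2,C)$ with $C$ a smooth cubic, let $\pi\colon Y\to X$ be the blow-up of a point $x\notin C$ with exceptional curve $P$, and let $f\colon Y\to\Pp^1$ be the projection from $x$, with $H=\{0\}+\{\infty\}$.
\item Then $E=2P$ is a section of $f$, and $K_Y+D_Y\sim 2P$ is $f$-ample. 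So $Z=Y$ and $E_Z=2P\neq0$.
\item Yet $E_Z$ is horizontal, nef over the generic point of $\Pp^1$, and pushes forward to $0$ on $W=X$. Every ingredient of your argument is present, and its conclusion is false.
\end{itemize}
The only hypothesis that fails here is $f^{-1}H\subset D_Y^{=1}$. Your horizontal argument never uses it, nor $K_{\Pp_A}+H\sim0$. The missing idea is therefore a mechanism by which this hypothesis forces the horizontal part of $E$ to be contracted over $\Pp_A$, i.e.\ forces $K_Y+D_Y$ to become numerically trivial on a minimal model of the generic fibre. This also bears on Step 2. You would need a precise reference for ``a pair with a good minimal model has one over any base''. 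The relative existence results one can actually quote need input on the generic fibre (a good minimal model there, or log abundance over $\Pp_A$), which is the same missing point.

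\textbf{The vertical case.} Your claim that no component of $E$ can map into $H$ is false. Every $\pi$-exceptional divisor lies in $D_Y^{=1}$ by construction, but its coefficient in $E$ is $1+a(P,X,D_X)$, which is positive unless $P$ is an lc place. For example:
\begin{itemize}
\item Take $(X,D_X)=(\Pp^2,L_1+L_2+L_3)$ with $L_i=\{x_i=0\}$ and the map $x_1/x_2\colon X\dashrightarrow\Pp^1$.
\item Let $Y$ be the blow-up of $L_1\cap L_2$ and of a general point of $L_1$.
\item All hypotheses hold, and $E$ is the second exceptional curve, which lies in $f^{-1}(0)$.
\end{itemize}
What is actually needed is that $E$ is of insufficient fibre type: for every prime divisor $Q\subset\Pp_A$, some component of $f^{-1}Q$ dominating $Q$ is not in $\Supp E$. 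In the example this component is $\tilde L_1$. For $Q$ meeting $G$, your appeal to Theorem \ref{thm:zhu1} does not supply this, for two reasons. First, the statement you are proving contains no quasi-Albanese map at all. Second, even in that setting Theorem \ref{thm:zhu1} only gives that $\alpha$ is a fibration, hence dominant. You would need the image of the klt locus to meet every such $Q$ in a dense subset, so that some non-exceptional divisor dominates $Q$. The proposal establishes none of these three points: contraction of the horizontal part of $E$, insufficient fibre type over the components of $H$, and insufficient fibre type over divisors meeting $G$.
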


\begin{thm}\label{thm:zhu3}
Let $(X,D_X)$ be a projective lc Calabi-Yau pair, let $X^0$ be its klt locus, and let
$\alpha:X^0 \to G$ be a fibration to a quasi-abelian variety $G$. Let $T$ (resp. $A$) be the torus part (resp. abelian part) of $G$. Let $M$ (resp. $N$) be the character lattice (resp. cocharacter lattice) of $T$. Let $(\Pp_A,H)$ be a standard compactification of $G$. Then the following holds.
\begin{enumerate}
    \item There is a crepant birational contraction $\mu:(Z,D_Z=\mu^{-1}_*D_X+\Ex(\mu))\dashrightarrow(X,D_X)$ from a $\Qq$-factorial dlt Calabi-Yau pair $(Z,D_Z)$, such that the rational map $g:(Z,D_Z)\to \Pp_A$ is a morphism.
    
    \item For the previous given $g:(Z,D_Z)\to\Pp_A$, the following are equivalent.
    \begin{enumerate}
        \item $\alpha$ is proper. 
        \item $\alpha$ is proper in codimension one.
        \item $D_Z^{=1}$ is $g$-vertical.
   \end{enumerate} 
    
     \item If $\alpha$ is proper, then the following hold. 
     \begin{enumerate}
     \item $(X,D_X)$ is toroidal-dlt.
     \item There is a PL-map $\Sigma(\alpha):\Sigma(X,D_X)\to N_{\Rr}=\Hom(M,\Rr)$.
    \item $\Sigma(\alpha):\Sigma(X,D_X)\to N_{\Rr}$ is bijective. In particular, $\bigcup_S \Sigma(\alpha)(\sigma_S)$ is a complete fan.
    \item Let $(W,D_W)$ be the toric compactification of $G$ associated to this fan. Then $\alpha$ extends to a morphism $h:X\to W$. 
    \item For the previously given $h$, there is finite cover $V\to W$ which is \'etale over $W\setminus D_W$ such that $(X,D_X)\times_WV\cong (V,D_V)\times (F,D_F)$ over $V$. In particular, the toric compactification $(W,D_W)$ is projective. Here, the fiber product means the normalized fiber product.
    \end{enumerate}
\end{enumerate}
\end{thm}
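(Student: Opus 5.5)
The plan is to build everything on the model of part (1), since the equivalences in (2) and the structure in (3) are statements about that model. For (1), take a log resolution $\pi:Y\to X$ which also resolves the rational map $X\dashrightarrow\Pp_A$ induced by $\alpha$ (via $X^0\to G\subset\Pp_A$), and set $D_Y=\pi^{-1}_*D_X+\Ex(\pi)$. Since $\alpha$ is defined on the klt locus, the preimage of $H$ consists of $\pi$-exceptional divisors and components of $D_X^{=1}$; after further blow-ups I expect $f^{-1}H\subset D_Y^{=1}$. Theorem \ref{thm:zhu2} then gives a good minimal model $(Z,D_Z)$ of $(Y,D_Y)$ over $\Pp_A$ with $K_Z+D_Z\sim_{\Qq}0$. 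Because $(X,D_X)$ is Calabi-Yau, $Z\dashrightarrow X$ is crepant. Replacing $Z$ by a $\Qq$-factorial dlt modification over $\Pp_A$ keeps the morphism $g$, and every divisor contracted on $X$ has coefficient one, which gives $D_Z=\mu^{-1}_*D_X+\Ex(\mu)$.

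For (2), the implication (a)$\Rightarrow$(b) is trivial.

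For (b)$\Rightarrow$(c), suppose some component $E$ of $D_Z^{=1}$ is horizontal. Its center on $X$ is an lc center or a component of $D_X^{=1}$, hence disjoint from $X^0$. Over a general point of a divisor $B\subset\Pp_A$ meeting $G$, or over a general point of $G$ itself, the fibre of $X^0$ is then the complement in the fibre of $g$ of a nonempty closed subset that meets it. Such a fibre is not proper, contradicting properness in codimension one.

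For (c)$\Rightarrow$(a), assume $D_Z^{=1}$ is $g$-vertical. I would show that no component of $D_Z^{=1}$ has image meeting $G$. The idea is to restrict to a general fibre: the pair is then klt, and vertical lc centers over $G$ would contradict the universal property of the quasi-Albanese map, since the quasi-Albanese of $X^0$ is exactly $G$ and so no extra boundary over $G$ can occur. This gives $g^{-1}(G)\subset Z\setminus D_Z^{=1}$. Since $\mu$ contracts only divisors in $D_Z^{=1}$, $g^{-1}(G)$ maps onto $X^0$, and by rigidity $\alpha$ factors through the proper map $g|_{g^{-1}(G)}$. Hence $\alpha$ is proper.

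For (3), assume $\alpha$ is proper. By (2) the model $g$ is klt over the generic point of $\Pp_A$, and Ambro's isotriviality theorem \cite{Ambro2005moduli} applies over $G$. After a finite étale-over-$G$ base change, $(Z,D_Z)$ becomes birationally a product near every lc center.
\begin{itemize}
\item[(a)] Since lc centers lie over toric strata of $\Pp_A$, this local product structure shows $(X,D_X)$ is toroidal-dlt.
\item[(b)] Each stratum $S$ gives a cone $\sigma_S$ of monomial valuations. Pairing with the characters $\chi^m$, i.e.\ taking valuations of the rational functions $\chi^m$ pulled back, gives the PL map $\Sigma(\alpha)$.
\item[(c)] Bijectivity comes from properness. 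Every toric valuation of $G$ has a center on the proper $X$ lying over it. The fibres are klt, so there is a unique minimal lc center over each point, which gives injectivity.
\item[(d)] The cones form a complete fan, whose toric compactification is $W$. Since the cones of $\Sigma(X,D_X)$ map into cones of the fan, the standard toric extension criterion shows that $\alpha$ extends to $h$.
\item[(e)] The product decomposition is the isotriviality from Ambro's theorem, spread over all of $W$ by normalization. Projectivity of $W$ follows because $W$ is dominated by a finite quotient image of a product with the projective $X$.
\end{itemize}

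The main obstacle will be (c): showing that each cone maps linearly and injectively and that the images glue into a fan. I would first try to compare everything locally with the product model over $V$.
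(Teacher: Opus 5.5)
\textbf{Overall.} Your architecture matches the paper's: the model from Theorem \ref{thm:zhu2}, the characters $\chi^m$ to define $\Sigma(\alpha)$, Ambro's isotriviality, and the toric extension. Part (1) and the implications (a)$\Rightarrow$(b)$\Rightarrow$(c) are essentially fine, provided you compare fibres on a common resolution $Y$ rather than on $Z$, since $\mu$ is not a morphism. There are, however, genuine gaps in (2)(c)$\Rightarrow$(a) and in (3).

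\textbf{The implication (c)$\Rightarrow$(a).} This fails as written, at two points.

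First, ``general fibres are klt'' plus the universal property of the quasi-Albanese cannot show that no component of $D_Z^{=1}$ lies over a point of $G$. Vertical components never meet general fibres, and nothing in this reasoning uses $K_X+D_X\sim_{\Qq}0$. In Example \ref{exa1} the boundary curve $E$ lies over a point of $G$, while the quasi-Albanese of $X^0$ is still $G$. The missing input is the canonical bundle formula for $g$. The inclusion $g^{-1}H\subset D_Z^{=1}$ gives discriminant $B\ge H$, and $K_{\Pp_A}+H\sim 0$ with the moduli part pseudo-effective then forces $B=H$ and $M\sim_{\Qq}0$. Inversion of adjunction makes $g$ locally stable over $G$, which yields $D_Z^{v}=g^{-1}H$ (Proposition \ref{prop:tdlt-morphism}).

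Second, even granting $g^{-1}(G)\cap D_Z^{=1}=\emptyset$, the claim that $g^{-1}(G)$ ``maps onto $X^0$'' does not follow from $\mu$ contracting only divisors in $D_Z^{=1}$. On a common resolution $Y$ there may be a divisor $Q$ with centre in $X\setminus X^0$ but $a(Q,X,D_X)>-1$. Such a $Q$ is not an lc place, so nothing you say prevents $f(Q)$ from meeting $G$. The paper handles exactly this by retracting $v_Q$ to the monomial valuation $v\in\sigma_S$:
\begin{itemize}
\item $v$ is an lc place, hence centred in $D_Z^{=1}=g^{-1}H$, so $v(\chi^m)\neq 0$ for some $m$;
\item $\di_X(\chi^m)$ is supported on $D_X^{=1}$, so $v_Q(\chi^m)=v(\chi^m)$;
\item therefore $v_Q(\chi^m)\neq 0$, which forces $f(Q)\subset H$.
\end{itemize}

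\textbf{Part (3)(a) for genuinely lc pairs.} This is the real content of the lc case, and a product structure on the dlt model does not descend to $X$. Toroidality is not a crepant-birational invariant: the cone over an elliptic curve is not toroidal at its vertex, while its dlt modification is log smooth. So you must rule out $X$ collapsing lc centres of the model. The paper does this in three steps.
\begin{enumerate}
\item It shows that $Y\to A$ factors through $X$, because fibres of the dlt modification are rationally chain connected \cite{hacon2007shokurov}. This is also what makes $\di_X(\chi^m)$ supported on $D_X^{=1}$, which your (3)(b) silently needs.
\item It proves $\dim S=\dim T$ for a minimal lc centre $T$ of $Y$ over an lc centre $S$ of $X$. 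The argument is rigidity: proper fibres of $T^0\to S$ map into the affine fibres over $A$ of a toric stratum. This is combined with upper semicontinuity of fibre dimension, and it uses properness of $\alpha$.
\item From $M\to M_S\to M_T$, with $M_{\Rr}\to M_{T,\Rr}$ surjective by the dlt case, it gets $\rank M_S\ge \rank M_T=\codim S$, and then applies Theorem \ref{thm:tdlt-criterion}.
\end{enumerate}

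\textbf{Parts (3)(c) and (3)(e).} These are also too thin.
\begin{itemize}
\item For (c), ``every toric valuation has a centre on $X$'' does not produce an lc place. Bijectivity comes from the isotrivial model: since $(F,D_F)$ is klt, the lc places of $(F,D_F)\times(V,D_V)$ are exactly those of the toric factor.
\item For (e), Ambro's trivialization exists only over the preimage $V^0$ of $G$, and two normal compactifications of $(F,D_F)\times V^0$ need not agree. So ``spreading by normalization'' is not an argument. The paper uses that $\Sigma(h)$ maps cones onto cones, so $X_V\dashrightarrow F\times V$ contracts no divisors. It then applies Koll\'ar's product criterion for varieties with rational singularities \cite{kollar2010quotients}, and Lemma \ref{lem:toric}(2) to identify the base factor with $V$.
\item Projectivity of $W$ then comes from $V\cong\{x_0\}\times V\subset X_V$. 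Being dominated by the projective $X$ only gives properness.
\end{itemize}
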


We define toroidal-dlt pairs, the monomial valuation space $\Sigma(X,D_X)$, and prove this theorem for lc pairs in Section 3. Now we prove the theorem for dlt pairs. 

\begin{proof}[Proof for dlt pairs]
(1) Let $(\Pp_A,H)$ be a standard compactification of $G$. Let $\pi:Y\to X$ be a log resolution such that $D_{Y}':=\pi^{-1}(X\setminus X^0)$ is an snc divisor, and $f:Y\to \Pp_A$ is a morphism. 

Let $D_Y:=\pi^{-1}_*D_X+\Ex(\pi)$ and write \[K_Y+D_Y=\pi^*(K_X+D_X)+E_Y\] for some $\pi$-exceptional divisor $E_Y\geq0$. Then \[f^{-1}H\subset D'_{Y}\subset D_Y^{=1}.\] By Theorem \ref{thm:zhu2}, there is a good minimal model $(Z,D_Z)$ of $(Y,D_Y)$ over $\Pp_A$ with $K_{Z}+D_{Z}\sim_{\Qq}0$. Let $g:Z\to\Pp_A$ be the induced fibration, let $D_{Z}'$ be the pushforward of $D_Y'$, and let $\mu:Z\dashrightarrow X$ be the induced birational map. Since the MMP $Y\dashrightarrow Z$ contracts precisely the divisor $E_Y$ and $E_Y\subset \Ex(\pi)$, we have $\mu:Z\dashrightarrow X$ is a birational contraction. 

Let $Q$ be a component of $D_Y^{=1}-D_Y'$. Then $f(Q)$ intersects $X^0$. Since $X^0$ is the klt locus of $(X,D_X)$, we have  $-1<a(Q,X,D_X)=a(Q,Y,D_Y-E_Y)=-1+v_Q(E_Y)$. Hence $Q$ is a component of $E_Y$. Hence \[ \Supp(D_Y^{=1}-D_Y')\subset \Supp E_Y.\] Since the MMP $Y\dashrightarrow Z$ contracts $E_Y$, we have \[D_{Z}'=D_{Z}^{=1}.\]

(2) $(a)\Rightarrow(b)$ This is clear.

$(b)\Rightarrow(c)$. Assume that $\alpha$ is proper in codimension one. Then $\alpha\circ \pi:Y\setminus D_Y'\to G$ is proper in codimension one. Hence $D_{Y}'-f^{-1}H$ is $f$-exceptional. Hence $D_{Z}'-g^{-1}H$ is $g$-exceptional. Since $g$ is flat over $G$ (Proposition \ref{prop:tdlt-morphism} (2)), $D'_{Z}=g^{-1}H$. Hence, $D_{Z}^{=1}=D_{Z}'=g^{-1}H$ is $g$-vertical.

$(c)\Rightarrow(a)$. Assume that $D_{Z}^{=1}$ is $g$-vertical. Then $D_{Z}^{=1}=g^{-1}H$ (Proposition \ref{prop:tdlt-morphism} (3)). To show that $\alpha$ is proper, it suffices to show that $D_{Y}'=f^{-1}H$.

Let $Q$ be a component of $D_{Y}'$. Then $\pi(Q)\subset D_X^{=1}$. Let $D_i$ be components of $D_X^{=1}$ containing $\pi(Q)$. Let $S$ be the component of $\cap_iD_i$ containing $\pi(Q)$ and let $v=\sum_iv_Q(D_i)v_{D_i}\in \sigma_S$ be the weight $v_Q(D_i)$ monomial valuation centered at the generic point of $S$. In other words, $v$ is the retraction of $v_Q$ \cite[4.3]{jonsson2012valuations}.
 
We use the notation in Definition \ref{def:Qabelian}.
In particular, there is a group homomorphism $M\to \Pic^0(A)$, $m\mapsto L_m$, and there are nowhere-vanishing sections $\chi^m\in H^0(G,p^*L_m)$. Since $(X,D_X)$ is dlt, the morphism $Y\to A$ factors through a morphism $\beta:X\to A$. Hence we may view the pullback of $\chi^m$ to $X$ as a rational section of $\beta^*L_m$ on $X$.

Since $v$ is an lc place and $D_{Z}^{=1}=g^{-1}H$, the center of $v$ on $\Pp_A$ is contained in $H$. Hence, $v(\chi^{m})\neq0$ for some $m\in M$. Since $\di_X(\chi^m)$ is supported in $D_X^{=1}$, we have $v_Q(\chi^{m})=v(\chi^m)\neq0$ \cite[Lemma 4.7]{jonsson2012valuations}. Hence the center of $Q$ on $\Pp_A$ is contained in $H$.

(3) (a) Since $(X,D_X)$ is dlt, this automatically holds.

(b)  Define \[\Sigma(\alpha):\Sigma(X,D_X)\to N_{\Rr},v\mapsto(m\mapsto v(\chi^m)).\] Since $v(\chi^{m_1+m_2})=v(\chi^{m_1})+v(\chi^{m_2})$, this map is well-defined. Since  $\di_X(\chi^m)$ is supported on $D_X^{=1}$, $\Sigma(\alpha)$ is linear on cones in $\Sigma(X,D_X)$.

(c) By proposition \ref{prop:vertical}(1), $\Sigma(g):\Sigma(Z,D_Z)\to\Sigma(\Pp_A,H)$ is a subdivision. In particular, it is bijective. Since $|\Sigma(\Pp_A,H)|=N_{\Rr}$ and $(X,D_X)$, $(Z,D_Z)$ are crepant birational, $\Sigma(\alpha):\Sigma(X,D_X)\to N_{\Rr}$ is bijective. 

(d) Let $x\in X\setminus X^0=D_X^{=1}$ be a point and let $D_i$ be components of $D_X^{=1}$ containing $x$. Let $S$ be the component of $\cap_iD_i$ containing $x$. Then $\sigma_S=\Cone(v_{D_1},\cdots,v_{D_r})$. Let $\tau_S=\Sigma(\alpha)(\sigma_S)$. Let $m\in \tau_S^{\vee}\cap M$. Then $v_{D_i}(\chi^m)\geq0$ for any $i$. If $Q$ is a prime divisor containing $x$ and intersecting $X^0$, then $v_Q(\chi^m)=0$. Hence the morphism \[f:\Spec \Oo_{X,x}\to\Spec k [\tau_S^{\vee}\cap M]\]
extends to codimension one points. Since $X$ is normal, $f$ extends near $x$.

(e) This follows from Proposition \ref{prop:vertical}(2).
\end{proof}

\begin{exa}\label{exa1}Consider the log smooth pair $(\Pp^2,H)$, where $H$ is a sum of three lines.
Let $\pi:X \to \Pp^2$ be the blowup at a point $P\not\in H$. Let $E$ be the exceptional curve and let $D_X=\pi^{-1}_*H+E$. Then $\kappa(K_X+D_X)=0$,  and the quasi-Albanese morphism $X\setminus D_X\cong (\Cc^{\times})^2\setminus \{P\} \to (\Cc^{\times})^2$ is proper in codimension one, but not proper.    
\end{exa}

\begin{exa}\label{exa2}
Consider the log smooth pair $(\Pp^2,L+Q)$, where $L=(l=0)$ is a line and $Q=(q=0)$ is a conic. Then the quasi-Albanese morphism $\alpha:\Pp^2\setminus L+Q\to \Cc^{\times}$ is $x\mapsto l^2/q$. The induced rational map $\Pp^2\dashrightarrow\Pp^1$ is given by the linear system $\la 2L, Q \ra$.

We blow up to resolve the base locus of $\la 2L, Q \ra$. To simplify notation, we will use the same letter to denote both a divisor and its strict transform. Let $X\to \Pp^2$ be the blowup at $L\cap Q$, and let $E_1$, $E_2$ be the exceptional curves. Let $Y\to X$ be the blowup at $E_1\cap Q$, $E_2\cap Q$, and let $E_3$, $E_4$ be the exceptional curves. Let $\pi:Y\to\Pp^2$ be the composed birational morphism and let $f:Y\dashrightarrow\Pp^1$ be the induced rational map. Then $\pi^*\la2L,Q\ra=E_1+E_2+2E_3+2E_4+\la 2L+E_1+E_2,Q \ra$ and $\la 2L+E_1+E_2,Q \ra$ is base point free, i.e., $f$ is morphism. $(Y,L+Q+\sum_iE_i)$ is Calabi-Yau, $E_3$, $E_4$ are sections of the fibration $f$, $f^*0=2L+E_1+E_2$, $f^*\infty=Q$, and the dual complex of $L+Q+\sum_iE_i$ is a circle $Q-E_3-E_1-L-E_2-E_4-Q$.
\end{exa}

\section{Toroidal pairs}

\subsection{Toroidal pairs}

\begin{defn}[Valuations]
Let $X$ be a normal variety. 

A valuation $v$ is a map $v:K(X)^{\times}\to\Rr$ satisfying $v(fg)=v(f)+v(g)$, $v(f+g)\geq\min\{v(f),v(g)\}$, and $v(k^{\times})=0$. We say $v$ has a center on $X$ if there is a point $x\in X$ such that there is a local injective morphism $\Oo_{X,x}\to \Oo_v:=\{f\in K(X)\mid v(f)\geq0\}$ between local rings. If $v$ has a center on $X$, we denote by $C_X(v)$ its center. Define $\Val_X$ as the set of valuations having center on $X$. 

If $P$ is a prime divisor over $X$, then the order of vanishing along $P$ defines a valuation $v_P$. A valuation $v\in\Val_X$ is called divisorial if $v=c\cdot v_P$ for some $c\in\Rr_{>0}$ and some prime divisor $P$ over $X$.

Let $D$ be a $\Qq$-Cartier divisor. If $mD$ is Cartier and $s\in K(X)^{\times}$ is a local defining function of $mD$, we define  $v(D):=\frac{1}{m}v(s)$.
\end{defn}

\begin{defn}[Toroidal pairs \cite{KKMSD,WeakSemistableReduction2000}]\label{defn:toroidal}
Let $X$ be a normal variety and let $D_X\geq0$ be a reduced divisor. 

We call $(X,D_X)$ toroidal at a (closed) point $x\in X$ if there is an affine toric variety $X_{\sigma}$ with distinguished point $x_{\sigma}$, and an isomorphism of complete local $k$-algebras $\widehat{\Oo}_{X,x}\cong \widehat{\Oo}_{X_{\sigma},x_{\sigma}}$, which identifies $D_X$ with the toric boundary $D_{\sigma}$.
We call $(X,D_X)$ toroidal if it is toroidal at any point of $X$. Let $D_i$, $i\in I$, be components of $D_X$. We say that $(X,D_X)$ has no self-intersection if the $D_i$ are normal. In this note, we only consider toroidal pairs without self-intersection.

The following fact is important. Let $(X_{\sigma},D_{\sigma})$ be a local model of a toroidal pair $(X,D_X)$ at $x$. If $(X,D_X)$ has no self-intersection, then there is a Zariski (!) open neighborhood $V$ of $x$, and an \'etale morphism $p:V\to X_{\sigma}$ such that $p^{-1}D_{\sigma}=D_X|_{V}$. The proof is outlined  as a footnote in \cite[Page 195]{KKMSD}, and a detailed proof is given by Qu \cite[Theorem 2.13]{qu2025saturated}. 

Assume from now on that $(X,D_X)$ is a toroidal pair without self-intersection.

The components of $\cap_{i\in J}D_i\setminus \cup_{i \in I\setminus J}D_i$ form a stratification of $X$. Each stratum corresponds to an open orbit in local models. 

From the local description, $X$ has rational singularities, $K_X+D_X$ is Cartier, and $(X,D_X)$ is lc. The lc centers of $(X,D_X)$ are normal and are precisely the closures of strata. 

Let $S^0$ be a stratum, let $S$ be its closure, and let $D_S=\Diff^*_SD_X$, then $\Diff^*_SD_X=S\setminus S^0$ and $(S,D_S)$ is toroidal without self-intersection. Let $\eta_S$ be the generic point of $S$. Define
\begin{enumerate}
    \item $M_S:=$ the lattice of Cartier divisors on $\Spec \Oo_{X,\eta_S}$ supported on $D_X$, 
    \item $N_S:=\Hom(M_S,\Zz)$,
    \item $M_S^{+}:=$ the monoid of effective elements in $M_S$, and
    \item $\sigma_S:=\Hom(M_S^{+},\Rr_{\geq0})\subset N_{S,\Rr}$ the dual of $M_S^{+}$.
\end{enumerate}
Then $\sigma_S$ is a full-dimensional strictly convex rational polyhedral cone in $N_{S,\Rr}$, and
the local model of $(X,D_X)$ at $\eta_S$ is the toric pair corresponding to $\sigma_S\subset N_{S,\Rr}$ over $k(\eta_S)$. In particular, \[\codim_XS=\dim \Oo_{X,\eta_S}=\rank M_S.\] 

Let $v\in \sigma_S$. Then $v$ determines a valuation described as follows. Let $D_i$, $1\leq i\leq r$, be components of $D_X$ containing $S$. Then 
\[M_S=\{m=(m_1,\cdots,m_r)\in\Zz^r\mid\sum_im_iD_i\text{ is Cartier at }\eta_S\}\] For $m\in M_S$, let $\chi^m\in K(X)^{\times}$ be the unique (up to multiplication by a unit in $\Oo_{X,\eta_S}$) function such that $\di_X(\chi^m)=\sum_im_iD_i$. Let $f\in \Oo_{X,\eta_S}$. Since $\widehat{\Oo}_{X,\eta_S}\cong k(\eta_S)[[M_S^{+}]]$, $f$ can be written as $\sum_{m\in M_S^{+}}c_m\chi^m$. Define \[v(f):=\min_{c_m\neq0}\la v,m\ra.\] Hence $v(\chi^m)=\la v, m\ra$. Let $u_i\in N_S$, $1\leq i\leq r$, be elements such that $\la u_i,m\ra=m_i$. Then $\sigma_S=\Cone(u_1,\cdots,u_r)$ and the valuation determined by $u_i$ is just the divisorial valuation $v_{D_i}$.

The cone $\sigma_S$ of all strata $S^0$ form a rational polyhedral cone complex $\Sigma(X,D_X)=\{(\sigma_S,N_{S})\}$.
\end{defn}

\begin{defn}
A rational polyhedral cone complex (cone complex for short) $\Sigma=\{(\sigma,N_{\sigma})\}$ consists of a finite
collection of lattices $N_{\sigma}$ and full-dimensional strictly convex rational polyhedral cones  $\sigma\subset N_{\sigma,\Rr}$. The cones $\sigma$ are glued together to form the space $|\Sigma|$ with the following compatible conditions. 
1. If $\sigma\in\Sigma$ is a cone and $\sigma'$ is a face of $\sigma$, then $\sigma'$ is also in $\Sigma$ and $N_{\sigma'}=N_{\sigma}|_{\Span(\sigma')}$. 2. The intersection of two cones is a union of faces of both of them.

A morphism $f:\Sigma=\{(\sigma,N_{\sigma})\}\to\Sigma'=\{(\tau,N_{\tau})\}$ of cone complexes is a collection of linear maps $f_{\sigma}:(\sigma,N_{\sigma})\to (\tau,N_{\tau})$
with the compatible condition that if $\sigma'$ is a face of $\sigma$ then $f_{\sigma'}$ is the restriction of $f_{\sigma}$.  
\end{defn}

\begin{rem}
If we only consider the real vector space $N_{\sigma,\Rr}$, then we will call $\Sigma$ the real cone complex. For example, consider $f:(\Aa^1,o)\to(\Aa^1,o), x\mapsto x^k$. Let $v_o$ be the divisorial valuation of the origin $o$. Then $\Sigma(f)(v_o)=k v_o$, $\Sigma(f)$ is not an isomorphism of cone complexes, but an isomorphism of real cone complexes.    
\end{rem}

\begin{exa}
Let $(X_{\Sigma},D_{{\Sigma}})$ be the toric pair corresponding to a fan $\Sigma\subset N_{\Rr}$. Then $(X_{\Sigma},D_{{\Sigma}})$ is toroidal without self-intersection. For each cone $\sigma \in\Sigma$, let $V(\sigma)$ be the orbit closure. Then $M_{V(\sigma)}=M/\sigma^{\perp}\cap M$, $N_{V(\sigma)}=N|_{\Span(\sigma)}$ and $\{\sigma,N_{V(\sigma)}\}$ for all cone $\sigma$ recovers the fan $\Sigma$. Let $v\in\Sigma$, then the valuation determined by $v$ is given by the inner product between $M$ and $N$, i.e., $v(\chi^m)=\la v, m\ra$ for $m\in M$. If $v$ is contained in the relative interior of a cone $\sigma$, then $\Oo_{X_{\Sigma},\eta_{V(\sigma)}}\subset \Oo_{v}$, i.e.,
the center of $v$ on $X_{\Sigma}$ is the generic point of $V(\sigma)$.
\end{exa}

The following series of lemmas are useful. Lemma \ref{lem:toroidal} (2) and Proposition \ref{prop:tdlt-basic} (5), (6) are not actually used in the proof of the main theorems, but we include them for the sake of completeness.

\begin{lem}\label{lem:toric}
Let $(X,D_X)=(G_{\Sigma},D_{\Sigma})$ be a toric partial compactification of a quasi-abelian variety $G$. Let $A$ and $T$ be the abelian and torus parts of $G$, respectively.
\begin{enumerate}
    \item Let $f:Y\to X$ be a finite cover which is \'etale over $X\setminus D_X$, then $(Y,D_Y:=f^{-1}D_X)$ is a toric partial compactification of a quasi-abelian variety.
    \item Let $f:Y\to X$ be a projective birational morphism. Let $D_Y=f^{-1}_*D_X+\Ex(f)$. Assume that $K_Y+D_Y=f^*(K_X+D_X)$. Let $X_{\Sigma}$ be the toric variety corresponding to the fan $\Sigma\subset N_{\Rr}$. Then there is a subdivision $\Sigma'$ of $\Sigma$ such that $f$ is the morphism induced by the toric morphism $h:X_{\Sigma'}\to X_{\Sigma}$, i.e., $f=G\times^{T} h$.
    \end{enumerate}
\end{lem}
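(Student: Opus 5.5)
For (1), I would first look at the part of $Y$ over $G=X\setminus D_X$. Since $f$ is finite and \'etale there, the open set $Y^0:=f^{-1}(G)$ is a finite \'etale cover of a quasi-abelian variety, and possibly disconnected. I would argue one component at a time, so assume $Y$ is connected. By the structure theory of finite \'etale covers of commutative connected algebraic groups (Serre--Lang), every connected finite \'etale cover of $G$ is itself a quasi-abelian variety $G'$, equipped with an isogeny $\phi:G'\to G$.

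Next I would record what such an isogeny looks like. It restricts to an isogeny $T'\to T$ of tori and induces an isogeny $A'\to A$ of abelian varieties. On lattices, $N'\hookrightarrow N$ is a finite-index sublattice, so $N'_\Rr=N_\Rr$. The fan $\Sigma$ is still a fan with respect to the lattice $N'$; call it $\Sigma'$. Then $X_{\Sigma'}\to X_\Sigma$ is the toric morphism induced by $N'\to N$, and it is finite.

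The main step is to compare $Y$ with $G'_{\Sigma'}=G'\times^{T'}X_{\Sigma'}$. By construction, $G'_{\Sigma'}$ is normal and comes with a natural finite morphism to $G_\Sigma=G\times^T X_\Sigma$ that restricts to $\phi$ over $G$. Since $Y$ is normal and finite over $X$, it is the normalization of $X$ in the function field $K(Y)=K(G')$. The same description applies to $G'_{\Sigma'}$. By uniqueness of normalization, $Y\cong G'_{\Sigma'}$ over $X$, and under this isomorphism $f^{-1}D_X$ corresponds to $D_{\Sigma'}$.

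To justify that $G'_{\Sigma'}\to G_\Sigma$ is finite with normal source, I would check it \'etale-locally over $A$. There it becomes $(T'\text{-torsor})\times^{T'}X_{\Sigma'}\to T\text{-torsor}\times^T X_\Sigma$, which is a product of a finite cover of the base with the finite toric map $X_{\Sigma'}\to X_\Sigma$. I expect this step to be the main technical point.

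For (2), I would first note that because $K_Y+D_Y=f^*(K_X+D_X)$, the divisors in $\Ex(f)$ are lc places of the toroidal pair $(X,D_X)$. Such places correspond to rays in $|\Sigma|$, i.e. to toric valuations, and they are $T$-invariant and $G$-equivariant in the appropriate sense.

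Next I would make the translation action of $G$ on $X$ act on $Y$. The group is connected, and its action preserves the finitely many lc places appearing in $D_Y$. Since $Y$ is the normalization of a blowup determined by these valuations (for example, via a relative $\mathrm{Proj}$ of an invariant ideal built from $D_Y$), the action lifts to $Y$. Then $Y$ is a $G$-equivariant, projective, birational modification of $G_\Sigma$. Restricting to a fiber over $A$ gives a $T$-equivariant birational modification of $X_\Sigma$, which is toric; hence it equals $X_{\Sigma'}$ for some subdivision $\Sigma'$ of $\Sigma$.

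It remains to identify $Y$ with $G\times^T X_{\Sigma'}$. Both are normal, both are proper birational over $X$, and they have the same set of exceptional divisorial valuations, namely the rays of $\Sigma'$. A normal variety proper and birational over $X$ whose divisors over $X$ are exactly these is unique once projectivity and crepancy are used, since each is the relative lc model over $X$. Therefore $f=G\times^T h$.
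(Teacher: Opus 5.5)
Your part (1) follows the paper's proof. You get an isogeny $G'\to G$ from the structure of finite \'etale covers and a finite-index sublattice $N'\subset N$. You check that $G'\times^{T'}X_{\Sigma,N'}\to G_\Sigma$ is finite locally over the abelian part (the paper factors it through $X\times_AA'$). You conclude by uniqueness of the normalization of $X$ in $K(G')$. That part is fine.

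Part (2) has a genuine gap. Lifting the $G$-action rests on the claim that $Y$ is ``the normalization of a blowup determined by these valuations''. The divisorial valuations do not determine $Y$. Take $A=0$, $X$ the affine toric conifold (the cone over a square), and $Y_1,Y_2$ its two small toric resolutions. Both are projective and crepant over $X$ with $\Ex(f_i)=0$, and both have exactly the four boundary rays as their divisors. They differ by a flop, so they are not isomorphic over $X$.

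What you need is an $f$-ample Cartier divisor $L$ with $\Supp L\subset D_Y$. Then $Y=\Proj_X\bigoplus_{d\ge0}f_*\Oo_Y(dL)$. Each $f_*\Oo_Y(dL)\subset K(X)$ is cut out by regularity on $G$ together with the conditions $v_Q(h)\ge -d\,\mathrm{mult}_QL$ for the components $Q$ of $D_Y$. All of these conditions are $G$-invariant, so the algebra is $G$-stable and the action lifts.

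The paper obtains such an $L$ in two steps that your proposal lacks.
First, $f$ is an isomorphism over $G$ and $D_Y=f^{-1}D_X$. This holds because every exceptional divisor is an lc place and so has center in $D_X$, $f^{-1}D_X$ has pure codimension one, and $G$ is smooth.
Second, the argument of Lemma \ref{lem:pic}(1) applies, since $Y\setminus D_Y\cong G$. So every divisor on $Y$ is linearly equivalent to a pullback from $A$ plus a divisor supported on $D_Y$, and subtracting the pullback from $A$ keeps $L$ $f$-ample.

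Your final paragraph should be dropped, because its uniqueness claim is false. The conifold example gives two non-isomorphic normal, projective, crepant models over $X$ with the same divisors. The relative lc model of $(Y,D_Y)$ over $X$ is $X$ itself, since $K_Y+D_Y\sim_{\Qq,X}0$, so it cannot single out $Y$.

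The step is also unnecessary. Once $G$ acts on $Y$ compatibly with $Y\to X\to A$, the map $G\times^TY_0\to Y$, $[g,y]\mapsto g\cdot y$, is an isomorphism. You can check this on a trivializing chart $U$ of $G\to A$, where $Y|_U\cong U\times Y_0$. This gives $f=G\times^Tf_0$ directly, which is how the paper concludes.
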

\begin{proof}

(1) Since $G':=Y\setminus D_Y\to G$ is finite \'etale, $G'$ is quasi-abelian and $q:=f|_{G'}:G'\to G$ is an isogeny \cite[Theorem 4.2]{fujino2024quasi}. The isogeny $q:G'\to G$ induces isogenies between the tori $q_T:T'\to T$ and the abelian varieties $q_A:A'\to A$. $q_T$ induces a finite-index inclusion between the cocharacter lattices $N'\subset N$.

Let $X_{\Sigma,N'}$ be the toric variety corresponding to the fan $\Sigma\subset N'_{\Rr}$. Let $\sigma$ be a cone in $\Sigma$. Since $M\subset M'$ is of finite index, there is a positive integer $l$, such that for any $m\in k[\sigma^{\vee}\cap M']$, $(\chi^m)^l\in k[\sigma^{\vee}\cap M]$. Hence the morphism $h:X_{\Sigma,N'}\to X_{\Sigma,N}$ is finite.

Consider $F:=q\times^{q_T}h:G'_{\Sigma,N'}=G'\times^{T'}X_{\Sigma,N'}\to X=G_{\Sigma,N}=G\times^{T}X_{\Sigma,N}$. $F$ factors as $G'_{\Sigma,N'}\to X\times_AA'\to X$. Since the restriction of the first morphism to a trivializing chart $U\subset A'$ of $G'\to A'$ is $\id_{U'}\times h$, the first morphism is finite. Since $q_A$ is finite, the second morphism is finite. Hence $F$ is finite. Then both $G'_{\Sigma,N'}$ and $Y$ are normalization of $X$ in $K(G')$. Hence $G'_{\Sigma,N'}=Y$.

(2) See also \cite[Proof of Lemma 2.3.2]{Brown2018GeomToric}. Let $E\subset D_Y$ be an exceptional divisor. Since $a(E,Y,D_Y)=-1$ and $X\setminus D_X$ is smooth, $f(E)\subset D_X$. Since $f^{-1}D_X$ is of pure codimension one, we have $D_Y=f^{-1}D_X$ and $f$ is isomorphic over $G$.

Let $L$ be an $f$-ample Cartier divisor. By Lemma \ref{lem:pic} (1), we may assume that $\Supp L\subset D_Y$. Then components of $L$ correspond to monomial valuations of $(X,D_X)$, i.e, for component $Q$ of $L$, we have $v_Q\in \Sigma(X,D_X)=\Sigma$. Since the $v_Q$ are $G$-invariant, we have $\bigoplus_{d\geq0}f_*\Oo_Y(dL)$ is a $G$-invariant graded $\Oo_X$-algebra and $f:Y=\Proj_X\bigoplus_{d\geq0}f_*\Oo_Y(dL)\to X$ is $G$-invariant. Let $Y_0$ and $X_0$ be the fibers of $Y$ and $X$ over $0\in A$ and let $f_0:=f|_{Y_0}$. Then $f_0$ is $T$-invariant, $Y=G\times^T Y_0$ and $f=G\times^T f_0$. 

Let $U\subset A$ be a trivializing chart of $G\to A$, then $Y|_{U}\cong Y_0\times U$. Hence $Y_0$ is integral and normal. Since $f$ is isomorphic over $G$, $f_0$ is isomorphic over $T\subset X_0$. Hence $Y_0$ and $f_0$ are toric. Since $f_0$ is birational, $Y_0=X_{\Sigma'}$ for some subdivision $\Sigma'$ of $\Sigma$. 
\end{proof}

\begin{lem}\label{lem:pic}\leavevmode
\begin{enumerate}
\item Let $(X,D_X)$ be a toric partial compactification of a quasi-abelian variety $G$. Let $D_i$, $i\in I$, be components of $D_X$. Then $\bigoplus_{i\in I}\Zz[D_i]\oplus \Cl(A)\to \Cl(X)$ is surjective.
\item Let $(X,D_X)$ be a toroidal pair. Let $x\in X$ be a closed point. Let $D_i$, $i\in I$, be components of $D_X$. Then $\bigoplus_{i\in I}\Zz[D_i]\to \Cl(\Oo_{X,x})$ is surjective.
\end{enumerate}
\end{lem}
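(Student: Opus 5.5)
For (1), I will reduce to the torus fiber and then to the toric case. Put $G_\Sigma = G\times^T X_\Sigma \xrightarrow{p} A$ and $U:=X\setminus D_X = G$. A Weil divisor class on $X$ restricts to a class on $G$. Since $D_X$ is the complement of $G$, the excision sequence
\[\bigoplus_{i\in I}\Zz[D_i]\to \Cl(X)\to \Cl(G)\to 0\]
is exact. It therefore suffices to show that $p^*:\Cl(A)\to\Cl(G)$ is surjective.

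The map $G\to A$ is a $T$-torsor, hence a Zariski-locally trivial $(\Gg_m)^d$-bundle, since tori are special groups (Hilbert 90). I will then apply the standard fact that for a $\Gg_m$-torsor $P\to B$ over a normal variety, $\Cl(B)\to\Cl(P)$ is surjective. Indeed, $P$ is the complement of the zero section in a line bundle $L\to B$, $\Cl(L)\cong\Cl(B)$ by homotopy invariance of $\Cl$ for vector bundles, and restricting from $L$ to the open subset $P$ is surjective. Factoring $G\to A$ as a tower of $\Gg_m$-torsors, by filtering $T\cong\Gg_m^d$ and using $G/\Gg_m^{j}$, gives surjectivity of $\Cl(A)\to\Cl(G)$. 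Combining this with the excision sequence yields (1). The main point needing care is the identification of the intermediate quotients as $\Gg_m$-torsors over normal bases, which follows from local triviality.

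For (2), I will take the Zariski neighborhood $V$ of $x$ with an étale morphism $\phi:V\to X_\sigma$ satisfying $\phi^{-1}D_\sigma=D_X|_V$, as recalled in Definition \ref{defn:toroidal}. The local ring $\Oo_{X,x}$ is then a localization of $V$, and $\Cl(\Oo_{X,x})$ is generated by prime divisors through $x$. I will use the injection $\Cl(\Oo_{X,x})\hookrightarrow \Cl(\widehat{\Oo}_{X,x})$, which holds by Mori's theorem since excellent normal local rings give an injection on class groups via faithfully flat completion. Combined with $\widehat{\Oo}_{X,x}\cong\widehat{\Oo}_{X_\sigma,x_\sigma}$ compatibly with the boundaries, this reduces the claim to the toric local ring.

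For the affine toric variety $X_\sigma$, $\Cl$ is generated by the torus-invariant divisors, because the torus has trivial class group. So $\Cl(\Oo_{X_\sigma,x_\sigma})$ is generated by the $D_{\sigma,j}$. Completion requires care: I will instead argue that $\Cl(\widehat{\Oo}_{X_\sigma,x_\sigma})$ is generated by the boundary components. Since $k[\sigma^\vee\cap M]$ is a graded normal ring with the distinguished point as the irrelevant ideal, the class group of its completion agrees with that of the graded ring (Danilov/Flenner for rational singularities; toric rings are rational). This is the main obstacle.

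To avoid it, an alternative argument works directly with $\Cl(\Oo_{X,x})$. Set $U'=V\setminus D_X$. Then $\phi|_{U'}:U'\to T$ is étale. An element of $\Cl(\Oo_{X,x})$ has a divisor $E$. Using the injectivity into the completion, it suffices to show that $E$ becomes a combination of the $D_i$ in $\Cl(\widehat{\Oo})\cong\Cl(\widehat{\Oo}_{X_\sigma})$. The latter is the torsion-type group $M$-dual data $\cong \Zz^{\sigma(1)}/M$, generated by the boundary divisors, which I will cite from the graded-ring comparison. Transporting the $D_{\sigma,j}$ back to the $D_i$ completes the proof.
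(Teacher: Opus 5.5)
Your proposal is correct and follows the paper's proof. For (1) you use the excision sequence plus surjectivity of $\Cl(A)\to\Cl(G)$ via a vector-bundle partial compactification of $G$; your tower of $\Gg_m$-torsors is just an iterated form of the paper's $\Aa^d$-bundle $Y\to A$. For (2) you use Mori's injectivity $\Cl(\Oo_{X,x})\hookrightarrow\Cl(\widehat{\Oo}_{X,x})$ together with the fact that the class group of the completed toric local ring is generated by boundary divisors; the paper takes that fact from Ishiro's Theorem 5.8, where you cite the Danilov--Flenner graded comparison.

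Two small remarks. First, the graded comparison needs the maximal ideal at $x_\sigma$ to be the irrelevant ideal, which holds only when $\sigma$ is full-dimensional. So first replace a local model $X_{\sigma'}\times\Gg_m^r$ by $X_{\sigma'}\times\Aa^r$: the completion is the same, and the extra coordinate hyperplanes are principal. Second, your final ``alternative'' paragraph still relies on the same citation, so it can be dropped.
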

\begin{proof}
(1) Let $L$ be a divisor on $X$. Let $Y\to A$ be an $\Aa^d$-bundle which is a partial compactification of $G$. Let $\overline{L|_{G}}$ be the closure of $L|_{G}$ in $Y$. Then there is a rational function $h$ and a divisor $L_A$ on $A$ such that $\overline{L|_{G}}=p^*L_A+\di_X(h)$ on $Y$. Hence $L\sim p^*L_A+$ (some divisor supported on $D_X$). 

(2) This follows from $\Cl(\Oo_{X,x})\to\Cl(\widehat{\Oo}_{X,x})$ is injective \cite[Theorem 6.5]{samuel1964lectures} and $\bigoplus_{i\in I}\Zz[\widehat{D}_i]\to \Cl(\widehat{\Oo}_{X,x})$ is surjective \cite[Theorem 5.8]{ishiro2025local}.
\end{proof}

\begin{lem}\label{lem:toroidal}
Let $(X,D_X)$ be a toroidal pair without self-intersection.     
\begin{enumerate}

\item Let $f:Y \to X$ be a finite cover which is \'etale over $X\setminus D_X$. Then $(Y,D_Y:=f^{-1}D_X)$ is toroidal without self-intersection and $f$ is a toroidal morphism. 

\item Let $f:Y\to X$ be a projective birational morphism. Let $D_Y=f^{-1}_*D_X+\Ex(f)$. Assume that $K_Y+D_Y=f^*(K_X+D_X)$. Then $(Y,D_Y)$ is toroidal without self-intersection and $f$ is a toroidal morphism.

\end{enumerate}
\end{lem}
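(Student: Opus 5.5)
Both parts are local on $X$, and we reduce them to Lemma \ref{lem:toric} by passing to local models. By the fact recalled in Definition \ref{defn:toroidal} (Qu's theorem), every closed point $x\in X$ has a Zariski open neighborhood $V$ with an étale morphism $p:V\to X_\sigma$ such that $p^{-1}D_\sigma=D_X|_V$. A toric variety $X_\sigma$ is a toric partial compactification of the quasi-abelian variety $T$ (with $A$ a point). So it suffices to transport the statements of Lemma \ref{lem:toric} along $p$, and to check that the no-self-intersection property survives.

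For (1), I would use Zariski–Nagata purity and Abhyankar's lemma in the étale-local (or formal) setting. Since $f$ is finite and étale over $X\setminus D_X$, its restriction over $\Spec\widehat{\Oo}_{X,x}\cong\Spec\widehat{\Oo}_{X_\sigma,x_\sigma}$ is a finite cover étale over the complement of the toric boundary. The local fundamental group of $T$ near $x_\sigma$ is abelian, generated by $N$ (as for toric singularities). Hence each connected component of the cover is the normalization of $X_\sigma$ in a Kummer extension, namely $X_{\sigma,N'}\to X_{\sigma,N}$ for a finite-index sublattice $N'\subset N$, as in the proof of Lemma \ref{lem:toric}(1). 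This shows that $(Y,D_Y)$ is toroidal at every point over $x$ and that $f$ is toroidal, since it is formally a toric morphism. I would justify the description of the local cover either via the étale chart $p$ (pull back a cover of $X_\sigma$ constructed from $\pi_1$ of the torus) or by citing the toroidal Abhyankar lemma \cite{KKMSD,WeakSemistableReduction2000}.

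For no self-intersection, let $D'$ be a component of $D_Y$ mapping onto a component $D_i$. Then $D'$ is an lc center of $(Y,D_Y)$, and since $K_Y+D_Y=f^*(K_X+D_X)$, the map $D'\to D_i$ is finite. Toroidal lc centers are normal when their local branches are unibranch. I would argue that $D'$ is locally analytically a torus-invariant divisor of $X_{\sigma'}$, hence locally irreducible. This gives unibranch, and normality of $D'$ then follows from $S_2$ plus regularity in codimension one of the toric local model.

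For (2), fix a point $x$ and the étale chart $p:V\to X_\sigma$. The divisorial valuations of the components of $\Ex(f)$ are lc places of $(X,D_X)$, because $a=-1$. Hence they lie in $\Sigma(X,D_X)$, and over $V$ they correspond to rays of $\sigma$. As in Lemma \ref{lem:toric}(2), take an $f$-ample $L$ supported on $D_Y$; this uses Lemma \ref{lem:pic}(2) locally, after shrinking $V$ so that the class group of $V$ is generated by the $D_i$. Then $Y|_V=\Proj_V\bigoplus_d f_*\Oo_Y(dL)$, and the algebra is described by the monomial valuations of the components of $L$. The same construction on $X_\sigma$ gives a toric blowup $X_{\Sigma'}\to X_\sigma$ from a subdivision $\Sigma'$ of $\sigma$. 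Since the algebra is defined by order conditions along monomial valuations, it commutes with the flat base change $p$. Therefore $Y|_V\cong V\times_{X_\sigma}X_{\Sigma'}$, which is toroidal, with $f$ toroidal and $D_Y$ the pullback of the boundary.

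No self-intersection in (2) follows because components of $D_Y$ over $V$ are étale over the normal toric divisors of $X_{\Sigma'}$. For global normality, note that normality is local and each chart shows that the component is normal. The \textbf{main obstacle} I expect is the base-change step in (2): showing that $\bigoplus_d f_*\Oo_Y(dL)$ is the pullback of the toric algebra, which in effect requires identifying valuation ideals of monomial valuations under étale maps. I would try to prove this first by completing at points and using the identification $\widehat{\Oo}_{X,x}\cong \widehat{\Oo}_{X_\sigma,x_\sigma}$ together with faithful flatness.
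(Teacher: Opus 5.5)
Your plan follows the paper's proof in both parts. In (1) the paper also identifies each local piece of the cover with the Kummer cover $X_{\sigma,N'}\to X_{\sigma,N}$ attached to a finite-index subgroup $N'$ of the local fundamental group. It works on a small analytic neighbourhood $V$, where $\pi_1(V\setminus D_X)\cong N$ is immediate. Your formal version of this is fine. Your other option, pulling back a cover of $X_\sigma$ along the Zariski chart $p$, is not: $f^{-1}V\to V$ need not be pulled back from $X_\sigma$.

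In (2) the paper proceeds as you do:
\begin{itemize}
\item it takes an $f$-ample $L$ supported on $D_Y$ over $x$;
\item it twists $L$ by some $\chi^m$ so that the graded pieces are ideals of $\Oo_{X,x}$;
\item it proves $\bigoplus_d (f_*\Oo_Y(dL))_x\cong k[S]\otimes_{k[M^+]}\Oo_{X,x}$ by completing and using faithful flatness, which is the step you anticipated as the main obstacle;
\item it spreads the resulting isomorphism out over a shrunken $V$.
\end{itemize}
Your no-self-intersection argument in (2) is fine, because the resulting chart of $f^{-1}V$ is étale and Zariski-local.

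The one step you assert without a reason is no self-intersection in (1). Toroidality only tells you that each analytic branch of $D'$ at $y$ is a toric divisor of $X_{\sigma,N'}$. Your lc-center and $S_2$/$R_1$ remarks do not rule out $D'$ having several branches at $y$; without the hypothesis on $D_X$ it could. The missing argument is as follows.
\begin{itemize}
\item The toric divisors of $X_{\sigma,N'}$ and $X_{\sigma,N}$ correspond bijectively, since the fan is the same. Hence branches of $D_Y$ at $y$ biject with branches of $D_X$ at $x$.
\item Each branch of $D'$ at $y$ maps onto a branch of $D_i=f(D')$ at $x$.
\item $D_i$ is normal, so it has only one branch at $x$.
\end{itemize}
So $D'$ has a single branch at $y$, and that branch is a normal toric divisor. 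This is exactly where the no-self-intersection hypothesis on $D_X$ enters. It is the content of the paper's one-line remark that on $U$ the components of $D_Y$ and $D_X$ correspond one-to-one.
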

\begin{proof}

(1) The proof is relatively easy to work with analytic spaces and then will be the same as that of Abhyankar's lemma (see e.g. \cite[Proof of Proposition 2.25]{enokizono2024semistable}). 

We assume that $(x\in X,D_X)$ is analytically isomorphic to $((x_{\sigma}\times 1)\in X_{\sigma}\times (\Cc^{\times})^{r},D_{\sigma}\times (\Cc^{\times})^{r})$, where $\sigma\subset N_{\Rr}$ is of full dimensional. Let $V\subset X$ be a sufficiently small analytically open neighborhood of $x$. Then $\pi_1(V\setminus D_X)$ is  canonically isomorphic to $N$. Let $U$ be a connected component of $f^{-1}V$. Since $U\setminus D_Y\to V\setminus D_X$ is \'etale, it is induced by a finite-index subgroup $N'\subset N$. 

Let $X_{\sigma,N'}$ be the toric variety corresponding to $\sigma\subset N'_{\Rr}$ and let $f_{\sigma}:X_{\sigma,N'}\to X_{\sigma}$ be the finite cover. Since the corresponding cover between tori $T_{N'}\to T_N$ is also induced by $N'\subset N$, $(f_{\sigma}\times\id)^{-1}(V\setminus D_X)\cong U\setminus D_Y$. Then both $U\to V$ and $(f_{\sigma}\times \id)^{-1}V\to V$ are finite covers extending the cover $U\setminus D_Y \to V\setminus D_X$. Hence $U\cong (f_{\sigma}\times \id)^{-1}V$, $f|_{U}\cong f_{\sigma}\times \id$ \cite[Lemma 2.24]{enokizono2024semistable}. Hence $(Y,D_Y)$ and $f$ are toroidal. Since on $U$, the components of $D_Y$ and the components of $D_X$ are in one-to-one correspondence, the components of $D_Y$ are normal. Hence, $(Y, D_Y)$ has no self-intersection.

(2) Let $x\in X$ and let $X_{\sigma}\times\Gg_m^r$, $X_{\sigma}=\Spec k[M^+]$, be a local model at $x$, where $\sigma\subset N_{\Rr}$ is of full dimensional. Then there is an open neighborhood $V\ni x$ and an \'etale morphism $p:V\to X_{\sigma}\times\Gg_m^r$. Such a neighborhood exists if $(X,D_X)$ has no self-intersection (see \cite[Page 195]{KKMSD} or \cite[Theorem 2.13]{qu2025saturated}).

As in the proof of Lemma \ref{lem:toric} (2), we have $D_Y=f^{-1}D_X$ and $f$ is isomorphic over $X\setminus D_X$. Let $L$ be an $f$-ample Cartier divisor. By Lemma \ref{lem:pic}(2), we may assume that $L$ is supported on $D_Y$ over $x$. By replacing $L$ by $L-\di_Y(\chi^m)$ for some sufficiently large $m$, we may assume that $L$ is negative over $x$. Then $(f_*\Oo_Y(dL))_{x}\subset \Oo_{X,x}$. Let $v_j$ be all valuations corresponding to components of $D_Y$ over $x$. Since $f$ is crepant, the $v_j$ are monomial valuations of $(X,D_X)$. Since $L$ is negative over $x$, $a_j:=v_j(L)\leq0$. Let $S_d=\{m\in M\mid \la v_j,m\ra+da_j\geq0 \ \forall j\}\subset M^+$ and let $S=\bigoplus_{d\geq 0}S_d$. Then $k[S]=\bigoplus_{d\geq 0}k[S_d]$ is a graded family of ideals in $k[M^+]$.

We show that $\bigoplus_{d\geq0}(f_*\Oo_Y(dL))_{x}\cong k[S]\otimes_{k[M^+]}\Oo_{X,x}$. Since $k[S_d]\Oo_{X,x}\subset (f_*\Oo_Y(dL))_{x}$ is clear, we show the converse. Let $h\in (f_*\Oo_Y(dL))_{x}$. Then $v_j(h)+da_j\geq0$ for all $j$. Since $\widehat{\Oo}_{X,x}\cong k[[M^{+}]][[z_1,\cdots,z_r]]$, we can write $h=\sum h_m\chi^m$, $h_m\in k[[z_1,\cdots,z_r]]$. Then $\la v_j,m\ra+da_j\geq0$ for all $j$ and all $m$ with $h_m\neq0$. Hence $h\in k[S_d]\widehat{\Oo}_{X,x}$. Since ${\Oo}_{X,x}\to \widehat{\Oo}_{X,x}$ is faithfully flat \cite[Lemma 10.97.3]{stacks-project}, $(f_*\Oo_Y(dL))_{x}= {\Oo}_{X,x}\cap k[S_d]\widehat{\Oo}_{X,x}=k[S_d]\Oo_{X,x}$.

Let $Y_{\sigma}=\Proj k[S]$ and let $f_{\sigma}: Y_{\sigma}\to X_{\sigma}$ be the projection. Then $Y_{\sigma} $ and $f_{\sigma}$ are toric \cite[Chapter 11]{CoxLittleSchenckToric}. Let $X_{x}:=\Spec \Oo_{X,x}$. Then
\[Y_{x}:=Y\times_XX_x\cong\Proj_{X_x}\bigoplus_{d\geq 0}(f_*\Oo_Y(dL))_x\cong \Proj_{X_x} k[S]\otimes_{k[M^+]}\Oo_{X,x}\cong Y_{\sigma}\times_{X_{\sigma}}X_x.\] After shrinking $V$, we have $Y_V\cong Y_{\sigma}\times _{X_{\sigma}}V$ and $f_V\cong f_{\sigma}\times _{X_{\sigma}}V$. Hence, the following diagram is a fiber product.
\begin{center}
\begin{tikzcd}
Y_V \arrow[d, "f_V"] \arrow[r] & Y_{\sigma}\times\Gg_m^r \arrow[d, "f_{\sigma}\times\id"] \\
V \arrow[r, "p"]               & X_{\sigma}\times\Gg_m^r 
\end{tikzcd}    
\end{center}
In particular, $f$ is toroidal, and since $p$ is \'etale, $(Y,D_Y)$ is toroidal without self-intersection.
\end{proof}

\subsection{Toroidal-dlt pairs}

\begin{defn}[Toroidal-dlt pairs]
Let $(X,D_X)$ be an lc pair. We call $(X,D_X)$ toroidal-dlt (tdlt for short) if there is an open subset $U\subset X$ such that 
\begin{enumerate}
    \item all lc centers intersect $U$, and
    \item $(U,D_X|_{U}=D_X^{=1}|_{U})$ is toroidal without self-intersection.
\end{enumerate}

Let $(X,D_X)$ be a tdlt pair and $U$ be its toroidal locus. Define $M_S:=M_{S\cap U}$ (and similar for $N_S$, $M_S^{+}$, $\sigma_S$) for an lc center $S$, and define $\Sigma(X,D_X):=\Sigma(U,D_{X}|_U)$. Dlt pairs (resp. qdlt pairs) are tdlt pairs, and are precisely those for which $\Sigma(X,D_X)$ consists of smooth cones (resp. simplicial cones). If $(X,D_X)$ is dlt, then $\Sigma(X,D_X)$ is the usual monomial valuation space $\text{QM}(X,D_X)$ \cite{jonsson2012valuations}, so is the cone of the dual complex $\Dd(D_X)$. 

Define $\LC(X,D_X):=\{v\in\Val_X\mid a(v,X,D_X)=-1\}$. Then $\LC(X,D_X)=|\Sigma(X,D_X)|$ as subsets of $\Val_X$. Indeed, let $\pi:(U',D_U')\to (U,D_U=D_X|_{U})$ be a toroidal log resolution. Under the identification $\Val_U=\Val_{U'}$, we have \[\LC(X,D_X)=\LC(U,D_U)=\LC(U',D_{U'})=|\Sigma(U',D_{U'})|=|\Sigma(U,D_{U})|,\]
where the first ``$=$'' due to any lc center intersects $U$, section ``$=$'' due to $\pi$ is crepant, third ``$=$'' is the usual LC=QM for log smooth pairs \cite[Proof of Lemma 2.3]{BlumLiuXu2022Open}, and the fourth ``$=$'' follows since $\pi$ is toroidal.
\end{defn}

The following theorem is a generalization of \cite[Proposition 34]{de2017dual}, and is also a special case of Moraga-Savaldi's criterion
\cite{moraga2025geometric}. We provide a proof using Koll\'ar's $\Pp^1$-linking theory \cite[Section 4.4]{Kollar2013sing} and Bergner's criterion \cite{bergner2021lipman}.

\begin{thm}[{\cite{moraga2025geometric}}]\label{thm:tdlt-criterion}
Let $(X,D_X)$ be an lc pair and let $S$ be an lc center. Let $M_S$ be the lattice of Cartier divisors on $\Spec\Oo_{X,\eta_S}$ supported on $D_X^{=1}$. Then \[\rank M_S\leq \codim_XS\] with equality holds iff at general points of $S$, $D_X=D_X^{=1}$ and $(X,D_X)$ is toroidal without self-intersection. 
\end{thm}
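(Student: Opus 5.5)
We localize at the generic point $\eta_S$ and reduce to a statement about a log canonical pair over the (non-closed) residue field $k(\eta_S)$, or equivalently at a general closed point of $S$ after cutting by general hyperplanes. Set $c=\codim_X S$. We first take a $\Qq$-factorial dlt modification $\pi:(Y,D_Y)\to (X,D_X)$, crepant, with $D_Y=\pi^{-1}_*D_X+\Ex(\pi)$. We then choose a minimal lc center $W\subset Y$ dominating $S$. Kollár's $\Pp^1$-linking theory \cite[Section 4.4]{Kollar2013sing} says that all such minimal centers over $\eta_S$ are $\Pp^1$-linked, hence birational to one another. Each has dimension $\dim S + (c - 1 - \dim \Dd)$, where $\Dd$ is the dual complex of the fiber over $\eta_S$. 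In particular, the dual complex $\Dd(D_Y^{=1}|_{\pi^{-1}\eta_S})$ has dimension at most $c-1$.

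For the inequality, map each element of $M_S$, a Cartier divisor $\sum m_iD_i$ near $\eta_S$, to $\pi^*(\sum m_i D_i)$. This pullback is supported on $D_Y^{=1}$ over $\eta_S$ and is $\pi$-numerically trivial. Restricting to a minimal stratum $W$ over $\eta_S$, with components $E_1,\dots,E_{d+1}$ through $W$ where $d+1\le c$, we obtain a map $M_S\to\Zz^{d+1}$ given by $m\mapsto (v_{E_j}(\pi^*\sum m_iD_i))_j$. We show this map is injective. If the image vanishes, then $\chi^m$ is a unit at the generic point of $W$. Since $\chi^m$ is a rational function whose divisor is $\pi$-trivial and supported on the lc locus, and $\pi^{-1}\eta_S$ is connected, $\chi^m$ is a unit along all of $\pi^{-1}\eta_S$. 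Hence $m=0$ on $\Spec\Oo_{X,\eta_S}$. This gives $\rank M_S\le d+1\le c$.

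For the equality case, suppose $\rank M_S=c$. Then $d+1=c$, so $W$ has the same dimension as $S$ and $W\to S$ is generically finite. The monomial functions $\chi^m$, $m\in M_S$, then give a morphism from $\Spec\Oo_{X,\eta_S}$ to the affine toric variety $\Spec k(\eta_S)[M_S^+]$, which has dimension $c$. We aim to show this morphism is étale at $\eta_S$ and identifies $D_X$ with the toric boundary. Two things are needed here. First, every component of $D_X$ through $S$ must have coefficient $1$. Any fractional component $B$ would contribute positively to the log discrepancy of the toric valuations; since $\sigma_S$ is full-dimensional, some monomial valuation would then have $a<-1$ or would fail to be lc, which is a contradiction. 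Second, the map must have finite degree equal to $1$. This is where Bergner's criterion \cite{bergner2021lipman} enters: a pair whose boundary carries $c$ independent Cartier combinations at a point of codimension $c$ is toroidal there, a Lipman–Zariski type statement for lc pairs. We plan to apply it over $k(\eta_S)$ and then spread it out to general closed points of $S$. Normality of the components (no self-intersection) at general points of $S$ follows because each $D_i$ is an lc center's neighbor and is normal in codimension one at $\eta_S$ by the dlt model. Alternatively, it follows from the local toric model, whose boundary components are normal. The converse direction is immediate: in a toric model at $\eta_S$, $M_S$ is the full character lattice modulo nothing, so its rank is $c$.

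The main obstacle is the equality case, specifically verifying the hypotheses of Bergner's criterion from the rank condition alone. This requires controlling the boundary coefficients and excluding a nontrivial finite cover of the toric model. For the latter, we would first try to use that the minimal dlt stratum over $\eta_S$ is unique up to birational $\Pp^1$-linking together with crepancy, so that $K_X+D_X$ is Cartier near $\eta_S$ with trivial index-one cover.
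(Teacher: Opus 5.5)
Your outline has the same architecture as the paper's proof: pull $M_S$ back injectively to the lattice at a minimal lc center $W$ of a dlt modification, then verify Bergner's criterion in the equality case. However, neither load-bearing step is actually established.

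\emph{Injectivity of $M_S\to M_W$.} Your justification is that the divisor of $\pi^*\chi^m$ is $\pi$-trivial (automatic, since it is principal), supported on the lc locus, and that $\pi^{-1}\eta_S$ is connected. This never uses minimality of $W$, and it fails for non-minimal centers. Take $X=\{xy=z^2\}$ and $D_X=L_1+L_2$, with $\di(x)=2L_1$, $\di(z)=L_1+L_2$, and let $\pi$ be the blowup of the vertex $S$, with exceptional curve $E$. Then $\phi=x/z$ has $\di_Y(\phi)=L_1'-L_2'$. It meets all your hypotheses and is a unit at $\eta_E$, but it is not a unit at $E\cap L_1'$, so $M_S\to\Zz$, $m\mapsto v_E(\chi^m)$, is not injective.

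The propagation that does work uses the $\Pp^1$-link structure among \emph{minimal} centers, which you invoke only for a dimension count. Suppose $W'$ is directly $\Pp^1$-linked to $W$ through an lc center $Z$ that is crepant birational to a standard $\Pp^1$-link with $W,W'$ as the two sections. If $\phi$ is a unit at $\eta_W$, it restricts to a rational function on $Z$ whose divisor on a general $\Pp^1$-fiber sits at the two sections. Hence $v_W(\phi|_Z)=0$ forces $v_{W'}(\phi|_Z)=0$, and so $\phi$ is a unit at every minimal lc center over $S$. One concludes because the strict transform of a component of $\di_X(\phi)$ through $S$ contains such a center.

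\emph{The equality case.} You paraphrase Bergner's criterion as ``$c$ independent Cartier combinations at a codimension-$c$ point imply toroidal'', which is essentially the statement being proved. Its actual hypothesis is that $\Omega^{[1]}_X(\log D_X)$ is locally free and generated by closed logarithmic forms, and you concede you cannot verify this. Your coefficient argument (``some monomial valuation would have $a<-1$'') presupposes the toric structure you are trying to produce. Knowing that $K_X+D_X$ is Cartier with trivial index-one cover would not by itself give freeness of the log cotangent sheaf either.

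The missing idea is a top-degree log form. Work at a closed point, after cutting $S$ down. Let $\phi_1,\dots,\phi_n$ be a basis of $M_S$ and set $\omega=d\log\phi_1\wedge\cdots\wedge d\log\phi_n$, so that $\di_X(\omega)+D_X^{=1}\geq0$. Since $M_S\hookrightarrow M_W$ has finite index and $(Y,D_Y)$ is snc with $D_Y=D_Y^{=1}$ near $W$, the form $\pi^*\omega$ generates $\Oo_Y(K_Y+D_Y)$ at $W$. By crepancy, $\pi^*(\di_X(\omega)+D_X)=\di_Y(\pi^*\omega)+D_Y$ vanishes at $W$. Since $\di_X(\omega)+D_X\geq\di_X(\omega)+D_X^{=1}\geq0$ is $\Qq$-Cartier, it vanishes at $S$.

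This gives two things at once: $D_X=D_X^{=1}$ at $S$, and $\omega$ generates $\Oo_X(K_X+D_X^{=1})$. Therefore the map $M_S\otimes\Oo_X\to\Omega^{[1]}_X(\log D_X)$, $\phi\mapsto d\log\phi$, has invertible determinant, and it is an isomorphism by reflexivity. Bergner's criterion then applies, and the nontrivial finite cover of the toric model that you worry about is excluded automatically.
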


\begin{thm}\cite[Proposition 5.8, Theorem 5.14]{bergner2021lipman}\label{thm:bergner}
Let $X$ be a normal variety and let $D_X\geq0$ be a reduced divisor. Assume that $\Omega^{[1]}_X(\log D_X)$ is locally free and locally generated by closed logarithmic one-forms. Then $(X, D_X)$ is toroidal without self-intersection.   
\end{thm}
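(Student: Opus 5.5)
The plan is to turn the closed logarithmic forms into commuting logarithmic vector fields. From those I would build a formal torus action at each point, read off a toric local model, and then use residues to rule out self-intersection.

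\textbf{Step 1: commuting vector fields.} Fix a closed point $x\in X$. Choose closed logarithmic one-forms $\omega_1,\dots,\omega_n$, $n=\dim X$, forming a local basis of $\Omega^{[1]}_X(\log D_X)$ near $x$. Let $\xi_1,\dots,\xi_n$ be the dual basis of the reflexive log tangent sheaf $T_X(-\log D_X)=\Omega^{[1]}_X(\log D_X)^{\vee}$, which is also locally free. Each $\xi_i$ is a vector field on $X$ tangent to every component of $D_X$. Since each $\omega_k(\xi_i)$ is a constant, the Cartan formula gives
\[0=d\omega_k(\xi_i,\xi_j)=\xi_i\,\omega_k(\xi_j)-\xi_j\,\omega_k(\xi_i)-\omega_k([\xi_i,\xi_j])=-\omega_k([\xi_i,\xi_j]).\]
Hence $[\xi_i,\xi_j]=0$, and the $\xi_i$ span an abelian Lie algebra $\mathfrak g\cong k^n$ of derivations of $\Oo_{X,x}$ and of $\widehat{\Oo}_{X,x}$. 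Off $D_X$ the $\xi_i$ are pointwise independent, since there the forms are regular and give a basis of $\Omega^1$. So the local action has an open orbit, namely $X\setminus D_X$ near $x$.

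\textbf{Step 2: a formal torus action and the toric model.} The subalgebra $\mathfrak g_x\subset\mathfrak g$ of fields vanishing at $x$ acts on each $\mathfrak m_x/\mathfrak m_x^{l}$. Its elements commute, so they can be simultaneously Jordan-decomposed. Taking semisimple parts and their compatible limit over $l$, as in the Lipman--Zariski-type arguments, gives a commuting family of semisimple derivations of $\widehat{\Oo}_{X,x}$. Its rational envelope (the algebraic hull of the eigenvalue group) is a torus $T_x$ acting on $\widehat{\Oo}_{X,x}$. The residues of the $\omega_k$ along the branches of $D_X$ through $x$ are constants, and they give the rational structure that makes this envelope a torus of dimension $\rank$ of the residue lattice.

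The directions of $\mathfrak g$ not vanishing at $x$ supply a smooth formal factor $k[[z_1,\dots,z_s]]$. The remaining factor carries the torus action with a dense orbit, and the complement of that orbit is the union of the branches of $\widehat D_X$. A normal complete local ring with such a torus action and open orbit is the completion of an affine toric variety at its fixed point. This yields
\[\widehat{\Oo}_{X,x}\cong\widehat{\Oo}_{X_\sigma,x_\sigma}[[z_1,\dots,z_s]],\]
identifying $D_X$ with the toric boundary. So $(X,D_X)$ is toroidal at $x$.

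I expect this step to be the main obstacle. The difficulty is showing that the semisimple parts integrate to an honest torus, with integral weights, rather than just a diagonalizable Lie algebra. My first attempt would compute the eigenvalues via the residues of the $\omega_k$ and use the fact that residues of closed log forms along each component are constant and span the character space.

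\textbf{Step 3: no self-intersection.} Suppose a component $D_i$ is not normal. Then near some point $x$ it has two distinct analytic branches $B,B'$, which correspond to two distinct rays of $\sigma$. Residues of closed logarithmic forms are locally constant along the normalization of $D_i$, which is connected. So $\operatorname{res}_B\omega_k=\operatorname{res}_{B'}\omega_k$ for all $k$. But the $\omega_k$ generate $\Omega^{[1]}_X(\log D_X)$ at $x$, so the residue vectors of distinct rays of $\sigma$ must differ, because the residue map realizes the primitive ray generators in $N_{\sigma}\otimes k$. This contradiction shows every $D_i$ is normal.
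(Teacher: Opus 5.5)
The paper gives no proof of this statement; it quotes it from Bergner (\cite[Proposition 5.8, Theorem 5.14]{bergner2021lipman}). So your sketch has to stand on its own, and it has a genuine gap at Step 2.

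\textbf{What works.} Steps 1 and 3 are sound. The Cartan computation does show that the dual fields commute. Conversely, a commuting basis of $T_X(-\log D_X)$ has closed dual forms, so Step 1 uses the hypothesis completely. Step 3 also works once a toric model with $\widehat D_X$ equal to the boundary is available. The residue of $\omega_k$ is one constant on the irreducible set $D_i\cap U$. Writing $d\log\chi^m=\sum_k f_k\omega_k$, taking residues along two branches through $x$ and evaluating at $x$ shows that equal residue vectors force equal rays.

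\textbf{Where the gap is.} Step 2 is the entire content of the theorem, and it is asserted rather than proved. Integrality is not the real issue. The algebraic hull of a commuting family of semisimple derivations is automatically a torus; its character group is the finitely generated torsion-free group of weights of eigenvector topological generators of $\mathfrak m$. What you need is that this torus has dimension exactly $d=\dim\mathfrak g_x=\dim R$, where $R$ is the factor left after splitting off $k[[z_1,\dots,z_s]]$. That is what gives an open orbit, and nothing you wrote controls this dimension. If a nonzero element of $\mathfrak g_x$ acted nilpotently on $\mathfrak m_x/\mathfrak m_x^2$, the semisimple parts would be linearly dependent and the hull could be too small. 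In the other direction, the span of $x\partial_x+\sqrt2\,y\partial_y$ has a two-dimensional hull.

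\textbf{The missing lemma.} You need: no nonzero element of $\mathfrak g_x$ is nilpotent on $\mathfrak m_x/\mathfrak m_x^2$. Together with the bound $\dim T\le d$ for a torus acting faithfully on a $d$-dimensional formal germ, this gives $\dim T_x=d$. You would then still have to show that the nilpotent parts vanish, so that the $\xi_i$ lie in $\operatorname{Lie}(T_x)$ and $\widehat D_X$ is the whole toric boundary. This follows because a weight-zero derivation of a completed semigroup ring with one-dimensional weight spaces is diagonal.

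\textbf{Why commutativity must enter.} The lemma is exactly where commutativity has to be used, because freeness alone allows nilpotent basis elements. For the cusp $(y^2=x^3)\subset\Aa^2$, the log tangent module is free with basis $E=2x\partial_x+3y\partial_y$ and $\theta=2y\partial_x+3x^2\partial_y$, and $\theta$ is nilpotent. What excludes this case is that $[E,\theta]=\theta$, so the fibre Lie algebra at the origin is non-abelian and no basis commutes. Your argument never uses commutativity in this way.

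\textbf{The residue heuristic does not help.} Residues of closed log forms are arbitrary constants (e.g.\ $\sqrt2\,dx/x+dy/y$), so there is no ``residue lattice''. Relating residues to eigenvalues at $x$ would also require local equations of the branches. These do not exist when components of $D_X$ are not $\Qq$-Cartier, which is precisely the toroidal situation, where the Cartier combinations (the characters) are what you must construct.

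\textbf{A related circularity in Step 1.} The claim that $X\setminus D_X$ is the open orbit presupposes that $X\setminus D_X$ is smooth. That is the case $D_X=0$ of the theorem, a Lipman--Zariski statement, and it would only follow from a completed Step 2.
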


\begin{proof}[Proof of Theorem \ref{thm:tdlt-criterion}]
The if part follows from the properties of toroidal pairs. We prove the only-if part. 

We first treat the case when $S$ is a closed point. Let $\pi:(Y,D_Y)\to(X,D_X)$ be a dlt modification. Let $T\subset Y$ be a minimal lc center dominating $S$. Let $\phi\in K(X)^{\times}$ be a function such that $\Supp\di_X(\phi)\subset D_X^{=1}$ near $S$. Then $\Supp\di_Y(\pi^*\phi) \subset D_Y^{=1}$ near the generic point of $T$. Hence there is a homomorphism \begin{align}\label{equ:pullback}
\pi^*:M_S\to M_T, \ \phi\mapsto \pi^*\phi.    
\end{align}
We claim that this homomorphism is injective. Assume the contrary, i.e., there is a component $Q\supset S$ of $\di_X(\phi)$, but $\di_Y(\phi)=0$ near the generic points of all components of $D_Y^{=1}$ contain $T$. Since $\di_Y(\phi)$ is supported on $D_Y^{=1}$ near $T$, we have $\phi\in\Oo^{\times}_{Y,\eta_T}$. Let $T'\subset Y$ be another minimal lc center dominating $S$ which is directly $\Pp^1$-linked to $T$. Then there is an lc center $Z\subset Y$ containing $T$, $T'$ such that $\pi(Z)=S$ and $(Z,D_Z:=\Diff^*_ZD_Y)$ is crepant birational to a standard $\Pp^1$-link with $T$, $T'$ mapping to two sections. Since $v_T(\phi|_{Z})=0$, we have $v_{T'}(\phi|_{Z})=0$. Hence $\phi\in\Oo^{\times}_{Y,\eta_{T'}}$. 
Since all minimal lc centers dominating $S$ are $\Pp^1$-linked, $\phi$ is a unit near the generic points of all minimal lc centers dominating $S$. Let $Q_Y$ be the strict transform of $Q$ on $Y$. Then $\phi$ is not a unit near the generic point of the minimal lc center contained in $Q_Y$, which is a contradiction.

From the injective homomorphism \ref{equ:pullback}, we have \[\rank M_S\leq \rank M_T=\codim T\leq\dim X.\] 

Assume that $\rank M_S=\dim X$. Then $\rank M_S=\rank M_T=\codim T=\dim X$. We show that \[\alpha^*:M_S\otimes_{\Zz}\Oo_{X}\to\Omega_{X}^{[1]}(\log D_X^{=1}),\ \phi \mapsto d\log \phi,\] is isomorphic near $S$. Let $\phi_i\in K(X)^{\times}$, $1\leq i\leq n$, be rational functions representing a basis of $M_S$. Let \[\omega=d\log\phi_1\wedge\cdots\wedge d\log\phi_n.\] Then $\di_X(\omega)+D_X^{=1}\geq0$ near $S$. Since $(Y,D_Y)$ is log smooth near $T$, $M_T\otimes_{\Zz}\Oo_{Y}\to\Omega_{Y}^{[1]}(\log D_Y^{=1})$ is isomorphic and $D_Y=D_Y^{=1}$ near $T$. Hence $\bigwedge^{n}M_T\otimes_{\Zz}\Oo_{Y}\to\Oo_Y(K_Y+D_Y^{=1})$ is isomorphic near $T$. Since $\pi^*:M_S\to M_T$ is injective with finite index, $\pi^*\omega$ is a basis of $\Oo_Y(K_Y+D_Y^{=1})$ near $T$. Hence \[\di_Y(\pi^*\omega)+D_Y=\di_Y(\pi^*\omega)+D_Y^{=1}=0\text{ at } T.\] By $\di_Y(\pi^*\omega)+D_Y=\pi^*(\di_X(\omega)+D_X)$ and $\di_X(\omega)+D_X\geq\di_X(\omega)+D_X^{=1}\geq0$, we have \[\di_X(\omega)+D_X^{=1}=0\text{ and }D_X=D_X^{=1}\text{ at } S.\] Then $\omega$ is a basis of $\Oo_{X}(K_X+D_X^{=1})$ at $S$. Hence $\det(\alpha^*)$ is isomorphic near $S$. Since $M_S\otimes_{\Zz}\Oo_{X}$ and $\Omega_X^{[1]}(\log D_X^{=1})$ are reflexive, $\alpha^*$ is isomorphic near $S$. By Theorem \ref{thm:bergner}, $(X,D_X^{=1})$ is toroidal without self-intersection at $S$.  
 
Now assume that $s:=\dim S>0$. Let $x\in S$ be a general point and let $H_i$, $1\leq i\leq s$, be general ample Cartier divisors such that $x$ is an isolated point of $S\cap\bigcap_i H_i$, $(X,D_X+\sum_iH_i)$ is lc, and $x$ is an lc center. Then $M_x=M_S\oplus\Zz^s$. By the closed point case, we have $\rk M_S\leq \codim_X S$, and if the equality holds, then at $x$, $D_X=D_X^{=1}$ and $(X,D_X^{=1})$ is toroidal without self-intersection.
\end{proof}

We list basic properties of tdlt pairs. Note that (5) and (6) do not hold for dlt pairs. 

\begin{prop}\label{prop:tdlt-basic}Let $(X,D_X)$ be a tdlt pair and let $U$ be its toroidal locus.
\begin{enumerate}
    \item $(X,D_X)$ is klt (resp. plt) iff $D_X^{=1}=0$ (resp. components of $D_X^{=1}$ are disjoint).
    
    \item $X$ has rational singularities.
    
    \item Lc centers are normal and are precisely the closures of strata of $(U,D_X|_U)$ in $X$.

    \item Let $f:Y \to X$ be a finite cover which is quasi-\'etale over $X\setminus D_X^{=1}$. Let $D_Y=f^{-1}D_X^{=1}+f^*D_X^{<1}$. Then $(Y,D_Y)$ is tdlt. If further $(X,D_X)$ is qdlt, then $(Y,D_Y)$ is qdlt. $f$ induces a morphism between cone complexes $\Sigma(f):\Sigma(Y,D_Y)\to\Sigma(X,D_X)$. $\Sigma(f)$ is surjective, bijective on cones, and maps cones onto cones. If further $f$ is Galois, then $\Gal(f)$ acts on $\Sigma(Y,D_Y)$ and $\Sigma(Y,D_Y)/G\cong \Sigma(X,D_X)$ as real cone complexes.
    
    \item Let $f:Y \to X$ be a projective birational morphism. Let $D_Y=f^{-1}_*D_X+\Ex(f)$ and assume that $K_Y+D_Y=f^*(K_X+D_X)$. Then $(Y,D_Y)$ is tdlt and $\Sigma(Y,D_Y)$ is a subdivision of $\Sigma(X,D_X)$.
    
    \item Let $\Sigma'$ be a subdivision of $\Sigma(X,D_X)$. Then there is a tdlt modification  $f:(Y,D_Y)\to (X,D_X)$ such that $\Sigma(Y,D_Y)=\Sigma'$.

\end{enumerate}    
\end{prop}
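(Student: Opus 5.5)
Each part is local on $X$ near a point of an lc center, or formal on cone complexes, so the strategy is to reduce everything to the toroidal locus $U$ and to the lemmas already proved for toroidal pairs without self-intersection. For points outside $U$ that still lie on lc centers, Theorem \ref{thm:tdlt-criterion} supplies the needed control.

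\textbf{Parts (1)--(3).} These follow from the local structure on $U$.
\begin{enumerate}
\item For (1), every lc center meets $U$. On $U$ the lc centers are exactly the closures of the strata of $D_X^{=1}|_U$. So there are no lc centers iff $D_X^{=1}=0$. Likewise, the only lc centers are divisors iff no two components of $D_X^{=1}$ meet, since an intersection of two components meets $U$ in a codimension-two stratum.
\item For (3), lc centers of $(U,D_X|_U)$ are the closures of strata. Normality of their closures in $X$ follows from the standard fact that lc centers whose generic points are "dlt-like" are normal. Concretely, I would take a dlt modification, for which lc centers are normal, and push forward. The union of lc centers is seminormal by Ambro/Fujino. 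Each minimal-over-$\eta_S$ center maps with connected fibers onto $S$, and the general fiber is rationally chain connected over $S$ (Kollár's source/spring theory). Hence $S$ is normal via \cite[Theorem 4.45]{Kollar2013sing}.
\item For (2), rational singularities: $X$ is lc, and the non-klt locus is exactly the union of the lc centers. By Theorem \ref{thm:tdlt-criterion}, near general points of every lc center $(X,D_X)$ is toroidal, hence rational. I would then use the criterion that an lc pair which is "dlt at generic points of all lc centers" has rational singularities, namely Kollár--Kovács / \cite[Theorem 5.22]{KM98}-type arguments. I expect this to be the main obstacle: one must argue that the non-rational locus is contained in the non-klt locus, which holds since klt implies rational. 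One must then show the non-rational locus contains no lc center. I would adapt \cite[Corollary 5.25]{KM98}, using a dlt modification $Y\to X$ that is an isomorphism over $U$ and computing $R\pi_*\Oo_Y(-\lfloor\cdot\rfloor)$ via Kawamata--Viehweg vanishing.
\end{enumerate}

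\textbf{Part (4).} Over $U$ the cover is étale over $U\setminus D_X$ after a codimension-two check, by purity of the branch locus on the toroidal (hence quasi-étale-pure) locus. So Lemma \ref{lem:toroidal}(1) shows $f^{-1}U$ is toroidal without self-intersection and $f$ is toroidal there. By Riemann--Hurwitz, $K_Y+D_Y=f^*(K_X+D_X)$, so $(Y,D_Y)$ is lc with lc centers mapping onto lc centers. Hence all lc centers of $Y$ meet $f^{-1}U$, and $(Y,D_Y)$ is tdlt.

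Locally the toroidal map is $X_{\sigma,N'}\to X_{\sigma,N}$. This gives a linear map $\sigma_{S'}\to\sigma_S$ that is an isomorphism of real cones, which yields the surjectivity and the cone-by-cone properties. For qdlt, simpliciality of cones is preserved. In the Galois case, the group acts transitively on the preimages of each stratum, so the quotient is identified as real cone complexes.

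\textbf{Parts (5) and (6).} For (5), over $U$ apply Lemma \ref{lem:toroidal}(2). Crepancy gives that every lc center of $Y$ maps onto an lc center of $X$ and hence meets $f^{-1}U$. The local toric description $Y_\sigma=\Proj k[S]$ shows the cones subdivide $\sigma_S$.

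For (6), on $U$ build the toroidal modification from $\Sigma'$ via \cite{KKMSD}. Then extend to $X$ by taking the model of $X$ whose divisors are the (finitely many) divisorial valuations on rays of $\Sigma'$. These have log discrepancy zero, so such an extraction exists by \cite{BCHM}-type results on extracting lc places of a $\Qq$-factorialized dlt modification. Its relative ample model over $X$ agrees with the toroidal construction over $U$.
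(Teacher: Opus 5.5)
Your arguments for (1), (4) and (5) are essentially the paper's. You reduce to $U$, use Hurwitz and \cite[Proposition 5.20]{KM98} (resp.\ crepancy) to see that lc centers of $Y$ map onto lc centers of $X$ and hence meet the preimage of $U$, and let Lemma \ref{lem:toroidal} do the rest. Your purity remark, upgrading quasi-\'etale to \'etale over $U\setminus D_X$, is a detail the paper leaves implicit. Your sketch of (6) is in the spirit of \cite[Lemma 36, Lemma 39]{de2017dual}, which the paper simply cites. The genuine gaps are in (2) and (3), and both sit at points of lc centers lying outside $U$. Theorem \ref{thm:tdlt-criterion} gives no control there, since it only describes general points of lc centers, which already lie in $U$.

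For (2), showing that no lc center is contained in the non-rational locus does not make that locus empty. A priori it could be a closed subset of $D_X^{=1}\setminus U$, say a point on a divisorial lc center, that is not itself an lc center. The missing input is the theorem of Alexeev--Hacon \cite{AlexHacon2012}: every non-rational center of an lc pair is an lc center. The generic point of every lc center lies in $U$, which has rational singularities, so there are no non-rational centers at all; this is the paper's proof. Your proposed substitute also fails as stated. A dlt modification that is an isomorphism over $U$ exists only when every cone of $\Sigma(X,D_X)$ is smooth; for instance, at a stratum with a non-simplicial cone $X$ is not even $\Qq$-factorial.

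For (3), the normality argument does not close. \cite[Theorem--Definition 4.45]{Kollar2013sing} only says that the spring of $S$ is finite and Galois over the normalization $S^n$. The toroidal structure at $\eta_S$ tells you at best that a minimal lc center $T$ of a dlt modification over $S$ maps birationally onto $S$, i.e.\ that the spring is $S^n$. That says nothing about whether $S^n\to S$ is an isomorphism, and seminormality alone does not decide it (a nodal curve is seminormal). You would need \emph{every} fiber of $T\to S$ to be connected. Given seminormality, that is equivalent to the normality you are trying to prove, and you give no reason for it at points of $S\setminus U$.

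The paper instead verifies Serre's criterion for a component $S$ of $D_X^{=1}$. A point $\xi\in S\setminus U$ is never the generic point of an lc center, because every lc center meets the open set $U$. If $\xi$ has codimension $\geq 2$ in $S$, then \cite[Theorem 7.20]{Kollar2013sing} gives $\depth_\xi\Oo_X(-S)\geq 3$. Since $X$ is Cohen--Macaulay by (2), the sequence $0\to\Oo_X(-S)\to\Oo_X\to\Oo_S\to 0$ gives $\depth_\xi\Oo_S\geq 2$. If $\xi$ has codimension one in $S$, it is a codimension-two point of $X$ that is not an lc center, so $(X,D_X)$ is plt at $\xi$ and $D_X^{=1}$ is normal there. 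Hence $S$ satisfies $S_2$ and $R_1$, so it is normal. Since $(S,\Diff^*_S D_X)$ is again tdlt, induction on dimension handles all lc centers. Note that this route needs (2) first, so the order of your argument has to change.
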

\begin{proof}
(1) This follows from (3). 

(2) This follows from the fact that $U$ has rational singularities and \cite{AlexHacon2012}. 

(3) Let $S$ be a component of $D_X^{=1}$. We show that $S$ is normal. Let $\xi\in S\setminus U$ be a point with $\codim_S \xi \geq2$. Since $\xi$ is not an lc center, $\depth_{\xi}\Oo_X(-S) \geq3$ \cite[Theorem 7.20]{Kollar2013sing}. Since $X$ has rational singularities, $X$ is Cohen-Macaulay \cite[Lemma 5.12]{KM98}. Hence $\depth_{\xi}\Oo_X\geq2$. By $0\to \Oo_X(-S)\to \Oo_X \to \Oo_S\to0$, we have $\depth_{\xi}\Oo_S\geq2$ \cite[Lemma 2.60]{Kollar2013sing}. Hence $S$ is $S_2$. Let $\xi\in S\setminus U$ be a point with $\codim_S \xi =1$. Let $E$ be an exceptional divisor over $X$ such that $\xi \in C_X(E)$. Since $\codim_X \xi =2$, $\xi=C_X(E)$. Since $\xi$ is not an lc center, $a(E,X,D_X)>-1$. Hence $(X,D_X)$ is plt near $\xi$. Hence $D_X^{=1}$ is normal at $\xi$. In particular, $S$, being a component of $D_X^{=1}$, is $R_1$. Hence $S$ is normal by Serre's criterion.

Consider the pair $(S,D_S=\Diff^*_SD_X)$. By the toroidal adjunction, we have $D_S|_{S\cap U}=D_S^{=1}|_{S\cap U}$ and $(U,D_S|_{S\cap U})$ is toroidal. On the other hand, since lc centers of $(S,D_S)$ are lc centers of $(X,D_X)$, lc centers of $(S,D_S)$ intersects $S\cap U$. Hence $(S,D_S)$ is tdlt. Then (3) follows from an induction on dimension to $(S,D_S)$.

(4) By the Hurwitz formula, $K_Y+D_Y=f^*(K_X+D_X)$. By \cite[Proposition 5.20]{KM98}, $(Y,D_Y)$ is lc. Let $U_Y=f^{-1}U$. By $D_Y^{<1}=f^*D_X^{<1}$, we have $D_Y|_{U_Y}=D_Y^{=1}|_{U_Y}$. Let $\xi$ be an lc center of $(Y,D_Y)$. By the calculation in \cite[Proposition 5.20]{KM98}, $f(\xi)$ is an lc center of $(X,D_X)$. Hence $\xi$ intersects $U_Y$. By Lemma \ref{lem:toroidal} (1), $(U_Y,D_Y|_{U_Y})$ is toroidal without self-intersection. Hence $(Y,D_Y)$ is tdlt. 

Since $f|_{U_Y}$ is toroidal, there is a morphism $\Sigma(f|_{U_Y}): \Sigma(U_Y,D_Y|_{U_Y})\to \Sigma(U,D_X|_{U})$ between cone complexes \cite[Lemma 1.8]{WeakSemistableReduction2000}. Define $\Sigma(f):=\Sigma(f|_{U_Y})$. Let $T$ be an lc center, and let $S=f(T)$. From the local description (see the proof of Lemma \ref{lem:toric} (1) and Lemma \ref{lem:toroidal} (1)), we have that $f_*:N_T\to N_{S}$ is injective with finite index and $f_*:\sigma_T\to \sigma_S$ is bijective. Hence $\Sigma(f)$ is surjective, bijective on cones, and maps cones onto cones.

If $(X,D_X)$ is qdlt, then cones in $\Sigma(X,D_X)$ are simplcial. Hence cones in $\Sigma(Y,D_Y)$ are simplicial. Hence $(Y,D_Y)$ is qdlt.

Assume that $f$ is Galois. Then $\Sigma(f)\Sigma(g)(v)=\Sigma(f\circ g)(v)=\Sigma(f)(v)$ for $v\in\Sigma(X,D_X)$ and $g\in \Gal$. Let $v_1,v_2\in\Sigma(Y,D_Y)$ be valuations such that $\Sigma(f)(v_1)=\Sigma(f)(v_2)$. Then $T_1:=C_Y(v_1)$ and $T_2:=C_Y(v_2)$ are two lc centers of $(Y,D_Y)$ mapping to an lc center $S$ of $(X,D_X)$. Hence there is a $g\in \Gal$, such that $g(T_1)=T_2$. Hence $\Sigma(g)(v_1)\in\sigma_{T_2}$.
By $\Sigma(f)\Sigma(g)(v_1)=\Sigma(f)(v_1)=\Sigma(f)(v_2)$ and $\Sigma(f):\sigma_{T_2}\to\sigma_S$ is bijective, we have $\Sigma(g)(v_1)=v_2$. Hence $|\Sigma(Y,D_Y)|/G=|\Sigma(X,D_X)|$. Assume $T$ is an lc center and $g\in \Gal$ such that $g(T)=T$. Then by $\Sigma(f):\sigma_T\to\sigma_S$ is bijective, $\Sigma(g)|_{\sigma_S}$ is identity. Hence $\Gal(f)$ just permutes the members of the collection $\{(\sigma_T,N_{T})\}$.

(5) Let $U_Y=f^{-1}U$. By $D_Y^{<1}=f^{-1}_*D_X^{<1}$, we have $D_Y|_{U_Y}=D_Y^{=1}|_{U}$. Let $\xi$ be an lc center of $(Y,D_Y)$. Since $f(\xi)$ intersects $U$, $\xi$ intersects $U_Y$. By Lemma \ref{lem:toroidal} (2), $(U_Y,D_Y|_{U_Y})$ is toroidal without self-intersection. Hence $(Y,D_Y)$ is tdlt. Since $f|_{U_Y}$ is toroidal, $\Sigma(f):=\Sigma(f|_{U_Y})$ is a subdivision.

(6) The proof is identical to that of \cite[Lemma 36, Lemma 39]{de2017dual}.
\end{proof}

\begin{prop}\label{prop:tdlt-morphism}
Let $(X,D_X)$ be a projective tdlt Calabi-Yau pair, let $(Y,D_Y)$ be a toric compactification of a quasi-abelian variety, and let $f:X\to Y$ be a fibration. Assume that  $f^{-1}D_Y\subset D_X^{=1}$. Then the following hold.
\begin{enumerate}
\item The canonical bundle formula of $(X,D_X)\to Y$ is $K_X+D_X\sim_{\Qq}f^*(K_{Y}+D_Y)$. 
\item $f:(X,D_X)\to Y$ is locally stable over $Y\setminus D_Y$. In particular, $f$ is flat over $Y\setminus D_Y$.
\item $D_X^v$ is reduced and $D_X^v=f^{-1}D_Y$.
\item The $f$-image of any vertical lc center of $(X,D_X)$ is an lc center of $(Y,D_Y)$.
\item $f$ induces a morphism of cone complexes $\Sigma(f):\Sigma(X,D_X)\to\Sigma(Y,D_Y)$.
\end{enumerate}
\end{prop}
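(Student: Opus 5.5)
We argue in the order (1), (3), (2), (4), (5), since the canonical bundle formula drives everything else.

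\textbf{Step (1).} Write $K_X+D_X\sim_{\Qq}f^*(K_Y+B_Y+M_Y)$ with discriminant $B_Y$ and moduli part $M_Y$. The key computation is the lc threshold of $f^*P$ for each prime divisor $P$ on $Y$.
\begin{itemize}
\item If $P\subset D_Y$, then $f^{-1}P\subset D_X^{=1}$. Every component over the generic point of $P$ is then an lc place, so the threshold is $0$ and $\operatorname{coeff}_PB_Y=1$.
\item If $P\not\subset D_Y$, then $P$ meets $G:=Y\setminus D_Y$, and $\operatorname{coeff}_PB_Y\ge0$.
\end{itemize}
Hence $B_Y\ge D_Y$. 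Since $K_Y+D_Y\sim0$ and $K_X+D_X\sim_{\Qq}0$, we get $B_Y-D_Y+M_Y\sim_{\Qq}0$.

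The moduli part $M_Y$ is b-nef, hence pseudo-effective, and $B_Y-D_Y\ge0$. I would argue that an effective divisor plus a pseudo-effective class numerically trivial on the projective $Y$ forces both to vanish numerically. Here $Y$ may be taken projective after passing to a model, or one uses that $Y$ carries ample classes. This gives $B_Y=D_Y$ and $M_Y\equiv0$.

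To upgrade $M_Y\equiv0$ to $M_Y\sim_{\Qq}0$, I would invoke Ambro's result that a numerically trivial moduli part of an lc-trivial fibration is $\Qq$-linearly trivial (b-semiample with torsion). This gives the formula in (1). I expect this to be the main obstacle: pseudo-effectivity of $M_Y$ on a possibly singular complete $Y$ must be handled carefully, and one has to check that $f$ is an lc-trivial fibration in Ambro's sense.

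\textbf{Step (3).} The equality $B_Y=D_Y$ means that over each $P\not\subset D_Y$ the pair $(X,D_X+tf^*P)$ stays lc for some $t>0$. So no component of $D_X$ dominating such a divisor $P$ lies in $D_X^{=1}$, and more importantly $D_X^v$ has no components over $G$ at all.

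To see the latter, suppose a vertical component $Q$ had $f(Q)\cap G\ne\emptyset$. Then $Q$ would map to a divisor $P$ meeting $G$, or be $f$-exceptional over $G$.
\begin{itemize}
\item In the first case, $\operatorname{coeff}_PB_Y>0$, contradicting $B_Y=D_Y$.
\item In the second case, I would use that $K_X+D_X\sim_{\Qq}f^*(\cdot)$ together with the negativity lemma over $G$. Over a codimension-one point $\eta$ of $G$, crepant-triviality forces the fibre to be irreducible, and the exceptional effective part $Q$ cannot be numerically trivial over $Y$.
\end{itemize}

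Thus over $D_Y$, every component of $f^*P$ lies in $D_X^{=1}$ and has coefficient $1$. The total coefficient computation with $\operatorname{coeff}_PB_Y=1$ then forces $f^*P$ to be reduced there, i.e.\ $f^{-1}D_Y=D_X^v$ is reduced.

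\textbf{Step (2).} Over $G$, step (3) gives $D_X^v|_{f^{-1}G}=0$ and $B_G=0$. Hence for every Cartier divisor $H$ through a point of $G$ with $(G,H)$ lc, the pair $(X,D_X+f^*H)$ is lc, by inversion of adjunction for lc-trivial fibrations, i.e.\ the canonical bundle formula applied to $(X,D_X+f^*H)$. Since $G$ is smooth, this is the standard criterion for local stability. Flatness then follows because a locally stable morphism to a smooth (in particular reduced) base is flat.

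\textbf{Steps (4) and (5).} For (4), let $Z$ be a vertical lc center. Then $f(Z)\subset D_Y$ by (2). The lc places of $(X,D_X)$ over $Z$ give lc places of the base pair $(Y,B_Y+M_Y)=(Y,D_Y)$ via the canonical bundle formula, so the generic point of $f(Z)$ is an lc center of the toric pair $(Y,D_Y)$, i.e.\ a torus orbit closure.

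For (5), on the toroidal locus $U$ the map $v\mapsto(m\mapsto v(f^*\chi^m))$ is defined exactly as in the proof of Theorem~\ref{thm:zhu3}(3)(b). Each $f^*\chi^m$ has divisor supported on $f^{-1}D_Y\subset D_X^{=1}$, so the map is linear on each cone. By (4), $\sigma_S$ lands in the cone of the orbit $f(S)$. Integrality holds because $f^*$ sends $M_{f(S)}$ into $M_S$.
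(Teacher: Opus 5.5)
Your overall plan matches the paper's: the canonical bundle formula with $B_Y\ge D_Y$ and $M_Y$ pseudo-effective for (1), inversion of adjunction for (2) and (4), and pull-back of boundary-supported Cartier divisors for (5). However, step (3), which you prove before (2), has a genuine gap. In your second case, a component $Q$ of $D_X^v$ with $f(Q)$ of codimension $\ge 2$ meeting $G$, nothing you say rules $Q$ out.

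\begin{itemize}
\item The discriminant $B_Y$ only sees codimension-one points of $Y$, so $B_Y=D_Y$ alone gives no information about such a $Q$.
\item The negativity lemma has nothing to act on. $K_X+D_X$ is $f$-trivial, but no divisor supported on $Q$ is, and $Q$ cannot be isolated from $K_X+D_X$.
\item The remark about fibres over codimension-one points of $G$ is irrelevant, since $Q$ lies over points of codimension $\ge 2$. It is also false: blow up $\Pp^1\times\Pp^1$, with its toric boundary, at a point of a horizontal boundary curve lying over $1\in\Gg_m$. This is crepant and gives a reducible fibre over $1$, where $B_Y=0$.
\end{itemize}

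What is needed is control over higher models of $Y$, and that is exactly what inversion of adjunction supplies.

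The repair is to reverse the order, as the paper does. Your argument for (2) never actually uses (3): inversion of adjunction over $G$ needs only $B_Y|_G=0$ and $M_Y\sim_{\Qq}0$, which (1) gives. Once $f$ is locally stable over $G$, let $Q$ be a component of $D_X^v$, so $a(Q,X,D_X)<0$. Suppose $f(Q)$ met $G$. Take an snc divisor $\Sigma$, defined near a general point of $f(Q)\cap G$, containing $f(Q)$. Then $\operatorname{coeff}_Q(D_X+f^*\Sigma)>1$, contradicting that $(X,D_X+f^*\Sigma)$ is lc (this is \cite[Corollary 4.56]{Kollar-Families}). This treats both of your cases at once, and $D_X^v\subset f^{-1}D_Y\subset D_X^{=1}$ then gives (3).

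Note that ``reduced'' in (3) means that $D_X^v$ has all coefficients one. Your claim that $f^*P$ is reduced for $P\subset D_Y$ is false: in Example~\ref{exa2} all hypotheses hold, yet $f^*0=2L+E_1+E_2$. The equality $\operatorname{coeff}_PB_Y=1$ holds only because the lc threshold is $0$, whatever the multiplicities.

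Finally, in (1) you do not need Ambro's torsion result. Once $B_Y=D_Y$, your own relation $B_Y-D_Y+M_Y\sim_{\Qq}0$ already gives $M_Y\sim_{\Qq}0$.
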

\begin{proof}

(1) Let $K_X+D_X\sim_{\Qq}f^*(K_{Y}+B+M)$ be the canonical bundle formula. By $f^{-1}D_Y\subset D_X^{=1}$, $B\geq D_Y$. Since $K_Y+D_Y\sim0$ and $M$ is pseudoeffective, we have $B=D_Y$ and $M\sim_{\Qq}0$.

(2) Let $\Sigma$ be a reduced snc divisor on $Y\setminus D_Y$. Then the canonical bundle formula for $f|_{f^{-1}(Y\setminus D_Y)}:(X,D_X+f^*\Sigma)|_{f^{-1}(Y\setminus D_Y)}\to Y\setminus D_Y$ is $K_X+D_X+f^*{\Sigma}|_{f^{-1}(Y\setminus D_Y)}\sim_{\Qq}f^*(K_{Y\setminus D_Y}+\Sigma)$. Since $(Y\setminus D_Y,\Sigma)$ is lc, by inversion of adjunction \cite[Theorem 3.1]{Ambro2004Shokurov} or \cite[Proposition 4.16]{filipazzi2018generalized}, $(X,D_X+f^*\Sigma)|_{f^{-1}(Y\setminus D_Y)}$ is lc. By \cite[Corollary 4.55]{Kollar-Families}, $f:(X,D_X)\to Y$ is locally stable over $Y\setminus D_Y$. 

(3) Let $Q$ be a component of $D_X^v$. Then $a(Q,X,D_X)<0$. By (2) and \cite[Corollary 4.56]{Kollar-Families}, $f(Q)\subset D_Y$. Hence $D_X^v\subset f^{-1}D_Y \subset D_X^{=1,v}$. Hence $D_X^v$ is reduced and $D_X^v=f^{-1}D_Y$.

(4) This follows from the inversion of adjunction.

(5) Let $S\subset X$ be an lc center. By $f^{-1}D_Y\subset D_X^{=1}$, $f$, via the pullback, induces linear maps $f^*:M_{f(S)}\to M_S$, $f^*:M^+_{f(S)}\to M^+_S$ and the dual $f_*:N_S\to N_{f(S)}$, $f_*:\sigma_S\to \sigma_{f(S)}$. The morphisms $f_*:\sigma_S\to \sigma_{f(S)}$, for all $S$, form a morphism between cone complexes $\Sigma(f):\Sigma(X,D_X)\to\Sigma(Y,D_Y)$.
\end{proof}

\begin{rem}
The definition of $\Sigma(f)$ is compatible with the slogan: Weil divisors (valuations) are meant to be
pushed-forward, whereas Cartier divisors (functions) are meant to be pulled-back \cite[Page 8]{boucksom2006differentiability}.
\end{rem}

\begin{rem}
In the setting of Proposition \ref{prop:tdlt-morphism}, one may say that $f$ is `toroidal at generic points of lc centers':
\begin{center}
\begin{tikzcd}
{\widehat{\Oo}_{X,\eta_S}} \arrow[r, "\cong"]                           & {k(\eta_S)[[M_S^{+}]]}                           \\
{\widehat{\Oo}_{Y,\eta_{f(S)}}} \arrow[r, "\cong"] \arrow[u, "\hat{f}^{\#}"] & {k(\eta_{f(S)})[[M_{f(S)}^{+}]]} \arrow[u, "\hat{g}^{\#}"]
\end{tikzcd}    
\end{center}
and the morphism $\Sigma(f):\Sigma(X,D_X)\to\Sigma(Y,D_Y)$ is the same as the construction for toroidal morphisms \cite[Lemma 1.8]{WeakSemistableReduction2000}.
\end{rem}

\begin{prop}\label{prop:vertical}
In the setting of Proposition \ref{prop:tdlt-morphism}, assume that $D_X^{=1}$ is vertical. Then the following holds. 
\begin{enumerate}
    \item $\Sigma(f)$ makes $\Sigma(X,D_X)$ a subdivision of $\Sigma(Y,D_Y)$ as real cone complexes.
    \item Assume that $\Sigma(f)$ maps cones onto cones. Then there is a Galois finite cover $q:(Z,D_Z)\to(Y,D_Y)$ which is \'etale over $Y^0:=Y\setminus D_Y$ such that $(X,D_X)\times_YZ\cong (F,D_F)\times (Z,D_Z)$ over $Z$. If $T$ is an lc center of $(Y,D_Y)$, then $f^{-1}T$ is irreducible and is an lc center of $(X,D_X)$.
\end{enumerate}    
\end{prop}
\begin{proof}
Let $X^0=f^{-1}Y^0$ and $D_{X^0}=D_X|_{X^0}$. Since $D_X^{=1}$ is vertical, $D_X^{=1}=f^{-1}D_Y$. Hence $(X^0,D_{X^0})$ is klt (Proposition \ref{prop:tdlt-basic} (1)). Consider the fibration $f|_{X^0}:(X^0,D_{X^0})\to Y^0$. By \cite[Proposition 4.4, Theorem 4.7]{Ambro2005moduli}, there is a Galois finite \'etale cover $Z^0\to Y^0$ such that there is an isomorphism \[(X^0,D_{X^0})\times_{Y^0}Z^0\cong (F,D_F)\times Z^0\] over $Z^0$, where $(F,D_F)$ is a fiber of $f|_{X^0}:(X^0,D_{X^0})\to Y^0$. Let $Z$ be the normalization of $Y$ in $K(Z^0)$, let $q:Z\to Y$ be the induced Galois finite cover, and let $D_Z=q^{-1}D_Y$. Let $X_Z$ be the normalization of $X\times_YZ$, let $p:X_Z\to X$ be the induced Galois finite cover, let $f_Z:X_Z\to Z$ be the induced fibration, and let $D_{X_Z}=p^{-1}D_X^{=1}+p^*D_X^{<1}$. Since $q$ is \'etale over $Y^0$, $p$ is \'etale over $X^0$. By Proposition \ref{prop:tdlt-basic}(4), $(X_Z,D_{X_Z})$ is tdlt and by Lemma \ref{lem:toric}(1), $(Z,D_Z)$ is a toric compactification of a quasi-abelian variety. Note that $(X_Z,D_{X_Z})$ and $(F,D_F)\times (Z,D_Z)$ are two compactifications of $(F,D_F)\times Z^0$. 

\begin{center}
\begin{tikzcd}
{(X,D_X)} \arrow[d, "f"] & {(X_Z,D_{X_Z})} \arrow[d, "f_Z"] \arrow[l, "p"'] & {(F,D_F)\times Z^0} \arrow[l, hook'] \arrow[r, hook] \arrow[d, "r^0"] & {(F,D_F)\times (Z,D_Z)} \arrow[d, "r"] \\
{(Y,D_Y)}                & {(Z,D_Z)} \arrow[l, "q"']                        & Z^0 \arrow[l, hook'] \arrow[r, hook]                                  & {(Z,D_Z)}                             
\end{tikzcd}   
\end{center}
Let $X_Z\xleftarrow{a}W\xrightarrow{b}F\times Z$ be a common log resolution and write \[a^*(K_{X_Z}+D_{X_{Z}})=b^*(K_{F\times Z}+D_F\times Z+F\times D_Z)+E_W.\] Then $E_W\sim_{\Qq}0$, $a_*E_W$ is supported on $D_{X_Z}^{=1}$ and $b_*E_W$ is supported on $F\times D_Z$. Let $P$ be a component of $D_{X_Z}^{=1}$. Then \[v_P(E_W)=-a(P,X_Z,D_{X_Z})+a(P,F\times Z,D_F\times Z+F\times D_Z)=1+a(P,F\times Z,D_F\times Z+F\times D_Z)\geq0\] Hence $a_*E_W\geq0$. By the negativity Lemma \cite[3.39]{KM98}, $E_W\geq0$. Applying the same argument to components of $F\times D_Z$, we deduce that $E_W\leq0$. Hence $(X_Z,D_{X_Z})$ and $(F,D_F)\times (Z,D_Z)$ are crepant birational over $Z$.

(1) Since $(F,D_F)$ is klt, under $\Sigma(r)$, we identity $\Sigma((F,D_F)\times(Z,D_Z))$ with $\Sigma(Z,D_Z)$. Since $(X_Z,D_{X_Z})$ and $(F,D_F)\times (Z,D_Z)$ are crepant birational, $|\Sigma(X_Z,D_{X_Z})|=|\Sigma((F,D_F)\times(Z,D_Z))|$ as subsets in $\Val_{X_Z}$. Hence $\Sigma(f_Z):\Sigma(X_Z,D_{X_Z})\to\Sigma(Z,D_Z)$ is bijective. By Proposition \ref{prop:tdlt-basic}(4), 
 $\Sigma(f)$ is bijective. Since $\Sigma(f)$ maps cones into cones, it is a subdivision of real cone complexes.

(2) Assume that $\Sigma(f)$ maps cones onto cones. Then $\Sigma(f_Z)$ maps cones onto cones. Hence the rational map $X_Z\dashrightarrow F\times Z$ contracts no divisors. Let $Z''\to Z$ be a birational morphism from a projective normal variety $Z''$. Then $\phi:X_Z\dashrightarrow F\times Z''$ contracts no divisors. Since $(X_Z,D_{X_Z})$ is tdlt, by Proposition \ref{prop:tdlt-basic}(2), $X_Z$ has rational singularities. By \cite[Proposition 18]{kollar2010quotients}, there are projective normal varieties $F'$, $Z'$, birational maps $a:F'\dashrightarrow F$, $b:Z'\dashrightarrow Z''$ and an isomorphism $X_Z\cong F'\times Z'$ such that $\phi=(a,b)$ as rational maps. Since $\phi$ is isomorphic along general fiber $f_Z^{-1}(z)$, $a$ is an isomorphic. Hence we may identify $F'$ with $F$. Let $\pi:X_Z \to Z'$ be the projection. Consider the birational map $s:Z'\dashrightarrow Z$. Since $\phi$ is over $Z$, the following diagram commutes.

\begin{center}
\begin{tikzcd}
X_Z\cong F\times Z' \arrow[d, "\pi"] \arrow[rd, "f_Z"] &   \\
Z' \arrow[r, "s", dashed]                               & Z
\end{tikzcd}    
\end{center}
Fix a point $x_0\in F$. For general $z'\in Z'$, $s(z')=f_Z(x_0,z')$, so $s$ coincides with the morphism $z'\mapsto f_Z(x_0,z')$. 

Let $D_{Z'}=s^{-1}(D_Z)$. Then the $f_Z$-vertical part of $D_{X_Z}$ is $f_Z^{-1}(D_Z)=\pi^{-1}D_{Z'}$. Since the $f_Z$-horizontal part of $D_{X_Z}$ is determined by its restriction to $Z^0$, $D_{X_Z}^h=D_F\times Z'$. Hence $D_{X_Z}=D_F\times Z'+D_{Z'}\times F$. Hence $K_{X_Z}+D_{X_Z}=\pi^*(K_{Z'}+D_{Z'})$. Hence $K_{Z'}+D_{Z'}=s^*(K_Z+D_Z)$. By Lemma \ref{lem:toric}(2), $(Z',D_{Z'})$ is a toric compactification of a quasi-abelian variety and $s$ is induced by a subdivision of $\Sigma(Z,D_Z)$. Since $\Sigma(f_Z)$ maps cones onto cones, $\Sigma(s)$ maps cones onto cones. Hence $s$ is in fact an isomorphism. Hence there is an isomorphism $(X_Z,D_{X_Z})\cong (F,D_F)\times (Z,D_Z)$ over $Z$.

Let $T$ be an lc center of $(Y,D_Y)$. By the proof of Lemma \ref{lem:toric}(1), we may identify $\Sigma(Z,D_Z)$ and $\Sigma(Y,D_Y)$ as real cone complexes, $q^{-1}T$ is irreducible and is an lc center of $(Z,D_Z)$. Since $X_Z\cong F\times Z$, $f^{-1}T=p(f_Z^{-1}(q^{-1}T))=p(q^{-1}T\times F)$ is irreducible and is an lc center of $(X,D_X)$. 
\end{proof}

\begin{proof}[Proof of Theorem \ref{thm:zhu3} for lc pairs]
Let $\pi:(Y,D_Y)\to (X,D_X)$ be a dlt modification with $\pi^{-1}\nklt(X,D_X)=D_Y^{=1}$ \cite[Lemma 29]{de2017dual}. Then (1) and (2) follow from the dlt case of Theorem \ref{thm:zhu3}. We focus on (3). 

\textit{Step 1. Let $A$ (resp. $T$) be the abelian (resp. torus) part of $G$. The composite morphism $Y\to A$ factors though $X$.}

By the dlt case of this theorem \ref{thm:zhu3}, there is a toric compactification $(Z,D_Z)$ of $G$, and a morphism $f:Y\to Z$ extending $\alpha\circ\pi|_{Y^0}$ such that via $f$, there is a one-to-one correspondence between lc centers of $(Y,D_Y)$ and $(Z,D_Z)$. Further, by the dlt case of \ref{thm:zhu3} (3-e), $f$ is equidimensional. Since $(Y,D_Y)$ is dlt, cones in $\Sigma(Z,D_Z)$ are simplicial.

\begin{center}
\begin{tikzcd}
 & Y^0 \arrow[r, hook] \arrow[d, "\alpha\circ\pi|_{Y^0}"] \arrow[ld, "\pi|_{Y^0}"'] & Y \arrow[d, "f"] \\
X^0 \arrow[r, "\alpha"] & G \arrow[r, hook]                                                                & Z               
\end{tikzcd}    
\end{center}

Let $D'\geq0$ be a nef divisor on $Z$ with $\Supp D'=D_Z$ (Lemma \ref{lem:nef}). Then $(Y,D_Y-\epsilon f^*D')$ is klt, $-(K_Y+D_Y-\epsilon f^*D')\sim_{\Qq}\epsilon f^*D'$ is nef over $X$ and $-K_Y$ is big over $X$. By \cite[Corollary 1.4]{hacon2007shokurov}, the fibers of $\pi$ are rationally chain connected. Hence $Y\to A$ factors through a morphism $X\to A$. 

Let $S\subset X$ be an lc center, and let $T$ be a minimal lc center of $(Y,D_Y)$ dominating $S$

\textit{Step 2. $\dim S=\dim T$.}
 
Let $\nu\circ\tilde{\pi}:T\to \tilde{S}\to S$ be the Stein factorization. Let $T^0:=T\setminus$ all lc centers of $(Y,D_Y)$ not containing $T$. Let $V^0=f(T^0)$. Then $V^0$ is a stratum of $(Z,D_Z)$, $f:T^0\to V^0$ is proper, and $\dim Y-\dim Z=\dim T-\dim V$. Note that $V^0$ is a quasi-abelian variety with abelian part $A$. Since $T$ is minimal among all lc centers dominating $S$, $\pi:T^0\to S^0:=\pi(T^0)$ is proper over the generic point of $S^0$. Hence $\tilde{\pi}:T^0\to \tilde{S}^0:=\nu^{-1}S^0$ is proper over the generic point of $\tilde{S}^0$. Consider the following commutative diagram.

 \begin{center}
\begin{tikzcd}
T^0 \arrow[d, "\tilde{\pi}"'] \arrow[r, "f"]           & V^0 \arrow[d, "q"] \\
\tilde{S}^0 \arrow[ru, "\beta",dashed] \arrow[r, "a"] & A                 
\end{tikzcd}
 \end{center}
where $q$ is the natural projection and $\tilde{S}^0\to A$ is the composite $\tilde{S}^0\to S^0\to X\to A$.

Let $s\in\tilde{S}^0$ be a general point. Since $\tilde{\pi}$ is proper over a neighborhood of $s$, $f$ is proper and the fiber $q^{-1}(a(s))$ is affine, by $f(\tilde{\pi}^{-1}(s))\subset q^{-1}(a(s))$, $f(\tilde{\pi}^{-1}(s))$ must be a point. By the rigidity Lemma \cite[5.3 Proposition]{KollarRationalCurves}, there is a rational map $\beta:\tilde{S}^0\dashrightarrow V^0$ such that $\beta\circ\tilde{\pi}=f$.

For general $z\in Z$, since $f$ extends $\alpha\circ \pi:Y^0\to G$, $\pi(Y_z)$ is a general fiber of $\alpha:X\to G$. Hence 
\begin{align}\label{equ:dim1}
\dim \pi(Y_z)=\dim X-\dim Z=\dim Y-\dim Z.    
\end{align} By the factorization $f=\beta\circ \tilde{\pi}$, after a possibly shrinking of $\tilde{S}^0$, for general $v\in V$, $\tilde{\pi}(Y_v)$ is a general fiber of $\beta:\tilde{S}^0\to V^0$. Hence 
\begin{align}\label{eqa:dim2}
\dim(\pi(Y_v))=\dim \tilde{\pi}(Y_v)=\dim S-\dim V.    
\end{align} 
Since the function $ Z\ni z\mapsto \dim\pi(Y_z)$ is upper semicontinuous, by \ref{equ:dim1} and \ref{eqa:dim2}, we have
\[\dim S-\dim V=\dim\pi(Y_v)\geq \dim\pi(Y_z)=\dim Y-\dim Z=\dim T-\dim V.\]
Hence $\dim S=\dim T$.

\textit{Step 3. $(X,D_X)$ is toroidal-dlt.}

Let $M_S$ (resp. $M_T$) be the lattice of Cartier divisors on $\Spec\Oo_{X,\eta_S}$ (resp. $\Spec\Oo_{Y,\eta_T}$) supported on $D_{X}^{=1}$ (resp. $D_Y^{=1}$). Let $M$ be the character lattice of the torus part of $G$. By step 1, $Y\to A$ factors through $X\to A$. Hence there is a factorization
\[M\to M_{S}\to M_T.\]

By the dlt case of this theorem,
$M_{\Rr}\to M_{T,\Rr}$ is surjective. Hence $M_{S,\Rr}\to M_{T,\Rr}$ is surjective. Hence by Step 2, we have \[\rank M_S\geq \rank M_T=\codim T=\codim S\] By Theorem \ref{thm:tdlt-criterion}, at general points of $S$, we have that $D_X=D_X^{=1}$ and $(X,D_X)$ is toroidal without self-intersection.
 
\textit{Step 4}. Since $(X,D_X)$ is toroidal-dlt, there is a corresponding cone complex $\Sigma(X,D_X)$. The remaining argument is the same as for dlt pairs.
\end{proof}

\begin{lem}\label{lem:nef}
Let $(G_{\Sigma},D_{\Sigma})$ be a projective toric compactification of a quasi-abelian variety $G$. Then there is an effective nef divisor $D'$ with $\Supp D'=D_{\Sigma}$.

\end{lem}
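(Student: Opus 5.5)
The plan is to reduce to the toric fiber and then spread out using the quasi-abelian structure. Write $X_\Sigma$ for the toric variety of the complete fan $\Sigma\subset N_{\Rr}$, so that $G_\Sigma=G\times^T X_\Sigma\to A$ is an $X_\Sigma$-bundle. Since $G_\Sigma$ is projective, there is an ample Cartier divisor $L$ on $G_\Sigma$. By Lemma \ref{lem:pic}(1) we can write
\[L\sim p^*L_A+\sum_i a_iD_i\]
for some divisor $L_A$ on $A$ and integers $a_i$. The goal is then to produce a $T$-invariant ample (or at least nef) divisor on the toric fiber, and to pass to a $\Qq$-divisor $\sum_i b_iD_i$ with all $b_i>0$ that is nef on $G_\Sigma$.

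The first step is to restrict to a fiber. The restriction $L|_{X_\Sigma}\sim\sum_i a_iD_i|_{X_\Sigma}$ is ample on the fiber $X_\Sigma$. Hence $X_\Sigma$ is projective and $\Sigma$ is the normal fan of a full-dimensional lattice polytope $P$. Translating by a character $m\in M$ changes $\sum_i a_iD_i$ by $\di(\chi^m)$, which on $G_\Sigma$ is the divisor of a rational section of $p^*L_m$ with $L_m\in\Pic^0(A)$. After such a translation we may assume $0$ lies in the interior of $P$, that is, all $a_i>0$. Then $D':=\sum_i a_iD_i$ is effective with support exactly $D_\Sigma$, and
\[D'\sim_{\text{num}} L-p^*L_A+(\text{pullback of an element of }\Pic^0(A)).\]

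The second step is to prove nefness, and this is where the main obstacle lies: $D'$ differs from the ample $L$ by $p^*L_A$, and $L_A$ may be negative. I would instead argue directly. The divisor $D'$ is $G$-invariant, or at least $T$-invariant, and its restriction to each torus-invariant section and fiber is controlled by the toric data. More cleanly, I would use the support function $\psi$ of $P$ (with $\psi(u_i)=-a_i$). This is a convex piecewise linear function, and $D'$ corresponds to the $G$-equivariant line bundle $G\times^T\Oo_{X_\Sigma}(D')$. A curve $C\subset G_\Sigma$ is either contained in a fiber, in which case $D'\cdot C\geq0$ because $D'|_{X_\Sigma}$ is ample, or it maps finitely to $A$. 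In the second case, degenerate $C$ by a one-parameter subgroup of $T$ to a curve inside a closed $G$-orbit $G\times^T\{x_\sigma\}$, where $\sigma$ is a maximal cone. On this orbit, $\Oo(D')$ restricts to $L_{m_\sigma}$ with $m_\sigma$ the vertex of $P$ for $\sigma$. This line bundle lies in $\Pic^0(A)$, so it is numerically trivial on the orbit, and $T$-translation preserves intersection numbers. The limit cycle consists of the orbit curve plus fiber curves, and both contributions are nonnegative, so $D'\cdot C\geq 0$.

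Alternatively, to avoid the degeneration argument, one can realize $D'$ as a sum of pullbacks. For each vertex $m_\sigma$, the section $\chi^{-m_\sigma}$ twisted by $L_{m_\sigma}$ shows $\Oo(D')\otimes p^*L_{m_\sigma}^{-1}$ is globally generated locally over $A$. Combining this over $\Pic^0$ numerical triviality gives nefness, since a line bundle that is relatively globally generated, twisted by a numerically trivial bundle, is nef. I would try this relative global generation of $\Oo(D')$ over $A$ first, since the $\Pic^0$ twist is numerically trivial, and fall back on the degeneration argument if it does not go through.
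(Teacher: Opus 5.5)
Your proposal is correct: you build the same divisor as the paper, but you prove nefness by a different argument.

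\textbf{Construction of $D'$.} The paper simply takes a torus-invariant very ample divisor on $X_\Sigma$ whose support is the whole toric boundary and spreads it out to $D'$ on $G_\Sigma$. Your detour through Lemma~\ref{lem:pic}(1) and an ample $L$ on $G_\Sigma$ only serves to show that $X_\Sigma$ is projective. That is automatic, since $X_\Sigma=p^{-1}(0)$ is closed in the projective variety $G_\Sigma$.

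\textbf{Nefness: the paper's route.} The paper uses that $D'$ is very ample over $A$ with $p_*\Oo(D')=\bigoplus_{m\in P\cap M}L_m$. So $D'$ is the pullback of $\Oo(1)$ under an embedding into $\Pp_A\bigl(\bigoplus_m L_m\bigr)$. This $\Oo(1)$ is nef because every $L_m$ lies in $\Pic^0(A)$, so no curve-by-curve analysis is needed.

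\textbf{Nefness: your degeneration route.} This also works once you choose the one-parameter subgroup $\lambda$ generic. Then its fixed locus in $G_\Sigma$ is exactly the union of the closed $G$-orbits $A_\sigma=G\times^T\{x_\sigma\}$.
\begin{itemize}
\item The flat limit of $\lambda(s)\cdot C$ as $s\to 0$ is $\lambda$-invariant.
\item Each horizontal component of the limit is finite over $A$, so its points are $\lambda$-fixed. Hence it lies in a single $A_\sigma$, where $\Oo(D')$ restricts to an element of $\Pic^0(A)$ and so has degree $0$.
\item The vertical components contribute nonnegatively, because $D'$ is ample on every fiber; this holds on all fibers, not just $p^{-1}(0)$, by $G$-invariance of $D'$.
\end{itemize}
This is more hands-on and shows where $D'$ fails to be ample, namely on curves in closed orbits. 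The cost is flat-limit bookkeeping that the paper's argument avoids.

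\textbf{Correction to the variant you would try first.} The principle ``a line bundle that is relatively globally generated over $A$, twisted by a numerically trivial bundle, is nef'' is false as stated. For example, $p^*M$ with $M$ anti-ample on $A$ is relatively globally generated but not nef. What actually makes the variant work is the following. Because the vertex $m_\sigma$ lies in $P$, the character section attached to $m_\sigma$ is a \emph{global} section, on all of $G_\Sigma$, of $\Oo(D')$ twisted by $p^*$ of an element of $\Pic^0(A)$. This section is nowhere vanishing on $G\times^T U_\sigma$, and these open sets cover $G_\Sigma$. Hence $\Oo(D')$ is a quotient of a direct sum of numerically trivial line bundles pulled back from $A$, and so it is nef. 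Phrased this way, your variant is exactly the paper's proof.
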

\begin{proof}
Let $D$ be a torus-invariant effective very ample divisor on the toric variety $X_{\Sigma}$ with $\Supp D$ equal to the toric boundary. Let $D'$ be the induced divisor on $G_{\Sigma}$. Then $\Supp D'=D_{\Sigma}$. We prove that $D'$ is nef. Let $q:G_{\Sigma}\to A$ be the projection. Since $D$ is very ample, $D'$ is very ample over $A$. Hence there is an embedding $\iota:G_{\Sigma}\hookrightarrow
\Pp_A(q_*\Oo(D'))$ with $D'=\iota^* \Oo(1)$. Since $H^0(X_{\Sigma},D)=\bigoplus_{m\in P_D\cap M}k\cdot \chi^m$ \cite[Proposition 4.3.3]{CoxLittleSchenckToric}, $q_*\Oo(D')=\bigoplus_{m\in P_D\cap M}L_m$. Since the $L_m$ are numerically trivial, $\Oo(1)$ is nef \cite[Lemma 2.3.2]{Positivity1}. Hence $D'=\iota^*\Oo(1)$ is nef.
\end{proof}

\section{Dual complexes}

\begin{defn}[{Qdlt pairs \cite[Proposition 34, Definition 35]{de2017dual}}]\label{def:qdlt}
Let $(X,D_X)$ be an lc pair. It is called quotient-dlt (qdlt for short) if for any lc center $S\subset X$, there are $\codim S$ components $D_i$ of $D_X^{=1}$, which are $\Qq$-Cartier near the generic point of $S$, such that $S\subset D_i$ for all $i$.
\end{defn}

Theorem \ref{thm:tdlt-criterion} and the following fact help to understand Definition \ref{def:qdlt}. Let $(X,D_X)$ be an lc pair. Then $(X,D_X)$ is dlt iff for any lc center $S\subset X$, there are $\codim S$ components $D_i$ of $D_X^{=1}$, which are Cartier near the generic point of $S$, such that $S\subset D_i$ for all $i$ \cite[18.23]{FlipsAbundanceThreefolds}.

\begin{lem}\cite[Lemma 2.5]{xu2025stable}\label{lem:qdltcenter}
Let $(X,D_X)$ be a qdlt pair. Then the codimension-$c$ lc centers of $(X,D_X)$ are normal, and are precisely the irreducible components of intersections of $c$ components of $D_X^{=1}$.
\end{lem}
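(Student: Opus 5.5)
The statement to prove is Lemma \ref{lem:qdltcenter}: for a qdlt pair $(X,D_X)$, the codimension-$c$ lc centers are normal, and they are exactly the irreducible components of intersections of $c$ components of $D_X^{=1}$. I would reduce it to the toroidal-dlt results of Section 3. The bridge is Theorem \ref{thm:tdlt-criterion}.

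\textbf{Step 1: qdlt implies tdlt.} Let $S$ be an lc center of codimension $c$. By Definition \ref{def:qdlt} there are components $D_1,\dots,D_c$ of $D_X^{=1}$, each $\Qq$-Cartier at $\eta_S$ and each containing $S$.

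I claim the classes of suitable multiples $m_iD_i$ in $M_S$ are linearly independent, so that $\rank M_S\geq c$. Suppose $\sum a_i m_i D_i$ were principal at $\eta_S$ with not all $a_i=0$. Then it would be a nonzero principal divisor supported on distinct prime divisors. But a nonzero divisor supported on distinct primes is a nonzero Weil divisor, whereas a principal divisor at $\eta_S$ must be trivial in the local group. More precisely, $M_S$ is by definition a sublattice of the free group on the components through $S$, so distinct components give independent elements. Hence $\rank M_S\geq c=\codim_X S$.

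Theorem \ref{thm:tdlt-criterion} gives the reverse inequality, so $\rank M_S=\codim_X S$. The equality case then says that at general points of $S$ we have $D_X=D_X^{=1}$ and $(X,D_X)$ is toroidal without self-intersection. Take $U$ to be the union of these toroidal neighborhoods over all lc centers. Then $(X,D_X)$ is tdlt.

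\textbf{Step 2: normality.} Normality of lc centers is Proposition \ref{prop:tdlt-basic}(3).

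\textbf{Step 3: identify the centers.} By Proposition \ref{prop:tdlt-basic}(3), the lc centers are the closures of strata of $(U,D_X|_U)$. The cone $\sigma_S$ is full-dimensional in $N_{S,\Rr}$, which has dimension $\codim S$. Its rays are the $u_i$ corresponding to the components through $S$. Since $D_1,\dots,D_c$ are $\Qq$-Cartier, there are at least $c$ independent rays. I want to show there are exactly $c$ components through $S$, so that $\sigma_S$ is simplicial. Suppose a further component $D_{c+1}$ passed through $S$. In the toroidal local model at a general point of $S$, the divisors $D_1,\dots,D_c$ are $\Qq$-Cartier torus-invariant divisors whose rays span $N_{S,\Rr}$. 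I expect one can then show the cone is simplicial with exactly those rays; this is the step I am least sure of. If it fails, I would fall back on citing \cite[Proposition 34]{de2017dual}.

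Granting simpliciality, each face of $\sigma_S$ corresponds to a subset of $\{D_1,\dots,D_c\}$.

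\textbf{Step 4: the converse direction.} Take any irreducible component $W$ of $D_{i_1}\cap\dots\cap D_{i_c}$, where the $D_{i_j}$ are components of $D_X^{=1}$. Then $W$ has codimension at most $c$, because the divisors are $\Qq$-Cartier near the lc centers. Also $W$ is an lc center: near the generic point of $W$ the pair is lc with $c$ boundary components through it, so $W$ contains an lc center.

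I expect this converse to be the main obstacle. My plan is to pass to an index-one cover, as in Proposition \ref{prop:tdlt-basic}(4), where the $D_{i_j}$ become Cartier and the dlt case applies \cite[18.23]{FlipsAbundanceThreefolds}. Then I would descend, using that $\Sigma(f)$ is bijective on cones.
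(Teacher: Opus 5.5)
The paper gives no proof of this lemma; it only cites \cite[Lemma 2.5]{xu2025stable}, so deriving it from the toroidal-dlt machinery of Section 3 is your own route. Steps 1 and 2 are fine and agree with the paper's remark that qdlt pairs are tdlt with simplicial cones. In Step 1 only your ``more precisely'' sentence is needed for independence in $M_S$; the remark about principal divisors is confused. The point you flag in Step 3 is easy to close. Since $m_1D_1,\dots,m_cD_c\in M_S$ and $\rank M_S=c$, every element of $M_S$ is a $\Qq$-combination of $D_1,\dots,D_c$ alone. But for $m$ in the interior of $\sigma_S^{\vee}$, the Cartier divisor $\operatorname{div}(\chi^m)=\sum_i\langle u_i,m\rangle D_i$ has positive coefficient on \emph{every} component through $S$. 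Hence exactly $c$ components pass through $S$ and $\sigma_S$ is simplicial. In the local model $D_1\cap\dots\cap D_c$ is then the closed orbit, so $S$ is a component of that intersection.

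The genuine gap is the converse, Step 4. For an irreducible component $W$ of $D_{i_1}\cap\dots\cap D_{i_c}$, you have no local information at $\eta_W$ until you know that $W$ is an lc center. The qdlt condition only makes components $\Qq$-Cartier on neighbourhoods of generic points of lc centers, and $\eta_W$ is a specialization of such points, not a generization. So ``$\codim W\le c$ because the divisors are $\Qq$-Cartier near the lc centers'' is circular, since non-$\Qq$-Cartier Weil divisors can meet in codimension $\geq 3$. Likewise, ``the pair is lc with $c$ boundary components through $W$'' does not make $W$ an lc center; lying inside $D_{i_1}$ only shows that $W$ is contained in one.

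The index-one-cover plan does not repair this. A cover making the $D_{i_j}$ Cartier near $\eta_W$ presupposes that they are already $\Qq$-Cartier there. Moreover, Proposition~\ref{prop:tdlt-basic}(4) concerns global covers quasi-\'etale off $D_X^{=1}$, and no such cover with this property need exist.

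What is missing is exactly that $W$ is an lc center, i.e.\ that $W$ meets the toroidal locus $U$. This is supplied by the standard fact that, in an lc pair, every irreducible component of an intersection of lc centers is again an lc center (Ambro, Fujino, Koll\'ar; see \cite{Kollar2013sing}). With that, the converse follows from your Step 3 applied at $\eta_W$. The cone $\sigma_W$ is simplicial, and the rays of $D_{i_1},\dots,D_{i_c}$ span a $c$-dimensional face $\tau$. Near $\eta_W$, the intersection $D_{i_1}\cap\dots\cap D_{i_c}$ is the irreducible codimension-$c$ orbit closure of $\tau$, which contains $W$. Since $W$ is a component of the intersection, it equals this orbit closure, so $\codim W=c$.
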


\begin{defn}\cite[Definition 8]{de2017dual}
Let $\Dd$ be a $\Delta$-complex. $\Dd$ is called regular if any simplex has distinct vertices, and is called simplicial if it is regular and distinct simplices have distinct vertex sets. Let $\sigma\in\Dd$ be a simplex. Define the star $\Star(\sigma,\Dd):=\cup_{\sigma\leq\eta}\eta^0$ and the closed star $\overline{\Star}(\sigma,\Dd):=\cup_{\sigma\leq\eta}\eta$. If $\Dd$ is simplicial, define the link $\Link(\sigma,\Dd):=\{\eta\in \Dd\mid \eta\cap \sigma=\emptyset, \sigma\cup\eta\in \Dd\}$. We have the following standard facts: $\overline{\Star}(\sigma,\Dd)=\sigma*\Link(\sigma,\Dd)$ and $\overline{\Star}(\sigma,\Dd)\setminus \Star(\sigma,\Dd)=\partial\sigma *\Link(\sigma,\Dd)$ \cite[Lemma 62.6]{munkres1984elements}.

\textbf{Condition}\label{condition} Let $D_X=\sum_{i\in I}D_i\geq0$ be a reduced divisor on variety $X$, where the $D_i$ are prime. Assume that 

\begin{enumerate}
     \item the $D_i$ are normal and
     \item for every $J\subset I$, if $\cap_{i\in J}D_i\neq\emptyset$, then connected components of $\cap_{i\in J}D_i$ (called strata) are irreducible and have codimension $|J|$ in $X$.
\end{enumerate}
Then the dual complex $\Dd(D_X)$ is defined as a $\Delta$-complex, whose dimension-$c$ simplices correspond to codimension-$(c+1)$ strata, and the face relations correspond to reverse inclusion of strata. 

Let $(X,D_X)$ be a qdlt pair. By Lemma \ref{lem:qdltcenter}, $\Dd(D_X^{=1})$ is well-defined. Further, if $D_X^{=1}\geq\Delta_X\geq0$ is a reduced divisor, then $\Dd(\Delta)$ is also well-defined. If $(X,D_X)$ is just lc , then $\DMR(X,D_X)$ is defined as the dual complex of one of its dlt modifications. By Proposition \ref{prop:dFKX}(1), the PL-homeomorphism class of $\DMR(X,D_X)$ is independent of the choice of dlt modification.
\end{defn}

Let $(X,D_X)$ be a dlt pair. Let $S$ be an lc center and let $\sigma_S$ be the corresponding simplex in $\Dd(D_X^{=1})$. Let $D_S:=\Diff_S^*D_X$. If $\Dd(D_X^{=1}) $ is simplicial, then $\Link(\sigma_S,\Dd(D_X^{=1}))=\Dd(D_S^{=1})$. Indeed, if $D_X^{=1}=\sum_{i\in I}D_i$ and $S=\bigcap_{j\in J}D_j$, then the lc centers of $(S,D_S)$ are precisely the intersections $S\cap \bigcap_{k\in K}D_k$ where $K\subset I\setminus J$ and the intersection is nonempty.

\begin{prop}\cite[Proposition 11, Corollary 22, Proposition 40]{de2017dual}\label{prop:dFKX}
\begin{enumerate}
    \item Let $(X,D_X)$ and $(Y,D_Y)$ be qdlt pairs. If they are crepant birational, then their dual complexes are PL-homeomorphic.
    \item Let $(X,D_X)$ be a qdlt pair, let $f:X\to Z$ be a morphism, and let $p:X\dashrightarrow Y$ be a $(X,D_X)$-MMP over $Z$. Let $D_Y=p_*D_X$. Then $(Y,D_Y)$ is qdlt and $\Dd(D_Y)$ is a subcomplex of $\Dd(D_X)$. If there is a numerically $f$-trivial effective divisor whose support is $D_X^{=1}$, then $\Dd(D_X)$ collapses to $\Dd(D_Y)$.
    \item Let $(X,D_X)$ be a $\Qq$-factorial qdlt pair and let $f:X \to Y$ be a Mori fibration. If $D_X^{=1}$ is vertical, then $D_Y:=f(D_X^{=1})$ is a divisor, $(Y,D_Y)$ is $\Qq$-factorial qdlt, and there is a natural identification $\Dd(D_X)=\Dd(D_Y)$.
\end{enumerate}    
\end{prop}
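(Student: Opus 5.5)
The statement is cited from de Fernex–Kollár–Xu, so the plan is to recover each part from the standard dlt arguments, adapted to the qdlt setting. The adaptation relies on Lemma \ref{lem:qdltcenter} and on the fact that qdlt pairs are tdlt with simplicial cones. Concretely, I will use that $\Sigma(X,D_X)$ is a simplicial cone complex whose projectivization is $\Dd(D_X^{=1})$, and work with that cone complex throughout.

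For (1), let $(X,D_X)$ and $(Y,D_Y)$ be crepant birational qdlt pairs. Then $\LC(X,D_X)=\LC(Y,D_Y)$ as subsets of the common valuation space, and both equal the supports of the respective cone complexes. So $|\Sigma(X,D_X)|=|\Sigma(Y,D_Y)|$ as sets of valuations. The remaining task is to show that the two PL structures agree. I would pass to a common toroidal log resolution $W$. By Proposition \ref{prop:tdlt-basic}(5), $\Sigma(W,D_W)$ is a subdivision of each of $\Sigma(X,D_X)$ and $\Sigma(Y,D_Y)$, where $D_W=\pi^{-1}_*D_X+\Ex(\pi)$. This requires first replacing $W$ by a crepant toroidal modification of both pairs. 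I would obtain it by a toroidal resolution of the toroidal loci of $X$ and $Y$, followed by the tdlt extraction of Proposition \ref{prop:tdlt-basic}(6). Since the lc valuations of the two pairs coincide, the integral affine structures agree as well. Projectivizing then gives a PL-homeomorphism of the dual complexes. The hardest step is making the two subdivisions compatible without assuming a common dlt model. If a direct construction fails, I would instead cite the weak factorization argument from the original source.

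For (2), I would work one MMP step at a time. Each divisorial contraction or flip is $(K+D)$-negative, so discrepancies only increase. Hence every lc center of $(Y,D_Y)$ is the image of an lc center of $(X,D_X)$ along which the map is an isomorphism at the generic point. The qdlt condition, namely $\codim S$ many $\Qq$-Cartier components through $S$, is preserved at those generic points. This shows that $(Y,D_Y)$ is qdlt and that $\Dd(D_Y)$ is the subcomplex spanned by the strata that survive.

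For the collapse in (2), let $\Theta\ge0$ be numerically $f$-trivial with $\Supp\Theta=D_X^{=1}$. Since the MMP runs over $Z$, the step contracts a ray $R$ with $(K+D)\cdot R<0$ and $\Theta\cdot R=0$. So either $R$ is disjoint from $D^{=1}$ or it meets both positive and negative components. I would then show that the strata destroyed in the step form a free face of $\Dd(D_X)$, in the sense of de Fernex–Kollár–Xu: the lost strata are exactly those containing the minimal lc center inside the exceptional locus. Removing them is an elementary collapse.

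For (3), $D_X^{=1}$ is vertical and $\rho(X/Y)=1$, so each component of $D_X^{=1}$ is the pullback of its image. The images $D_Y$ are therefore divisors. Strata of $D_X^{=1}$ are preimages of strata of $D_Y$ with the same codimension, and connectedness of fibers preserves irreducibility. Together these give the identification of dual complexes. $\Qq$-factoriality descends along Mori fibrations, and the qdlt property descends by the canonical bundle formula combined with Theorem \ref{thm:tdlt-criterion}.
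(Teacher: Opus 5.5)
The paper does not prove this proposition; it imports all three parts from de Fernex--Koll\'ar--Xu. Measured against what those results need, your sketch has a genuine gap in the collapse statement of (2), which is what Proposition \ref{prop:inductivezero} relies on.

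The first half of your (2) is fine. By the negativity lemma, every lc center of $(Y,D_Y)$ is a non-contracted lc center of $(X,D_X)$, so $(Y,D_Y)$ is qdlt and $\Dd(D_Y)$ is a subcomplex. The collapse, however, is only asserted, and the mechanism you describe is wrong. The strata lost in a step are the lc centers contained in the non-isomorphism locus, a family closed under passing to smaller strata. It is not ``the strata containing a minimal lc center'': that family includes the components of $D_X^{=1}$ through the center, which survive a flip. In general it is also not the open star of a single free face, so one elementary collapse does not suffice.

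What is needed is a free-face matching of all lost simplices. If some stratum is lost, one needs a component $D_0$ of $\operatorname{Supp}\Theta$ with $D_0\cdot R>0$; this is where $\Theta\cdot R=0$ is used. Such a $D_0$ survives, and one matches each lost stratum $W\not\subset D_0$ with $W\cap D_0$. For this matching to give elementary collapses (removing pairs in order of decreasing dimension), you must show two things:
\begin{enumerate}
\item $W\cap D_0$ is nonempty and irreducible, hence a single stratum;
\item every lost stratum inside $D_0$ arises this way.
\end{enumerate}
This requires connectedness of non-klt loci for the $-(K_X+D_X)$-ample contraction restricted to lc centers, together with the negativity lemma. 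None of it is in your proposal.

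There are further problems in (1) and (3).

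In (1), Proposition \ref{prop:tdlt-basic}(5) requires every exceptional divisor of $W\to X$ to be an lc place. A model dominating both $X$ and $Y$ with this property for both maps need not exist, for instance when $X\dashrightarrow Y$ flops a curve in the klt locus of a $\Qq$-factorial $X$. Building a common refinement via Proposition \ref{prop:tdlt-basic}(6) presupposes exactly the compatibility of the two piecewise-linear structures on $\LC(X,D_X)$. You assert this (``the integral affine structures agree'') without proof. A direct fix that avoids weak factorization: take a common log resolution $p\colon W\to X$, $q\colon W\to Y$ with the crepant sub-boundary $\Delta_W$ defined by $K_W+\Delta_W=p^*(K_X+D_X)=q^*(K_Y+D_Y)$, allowing negative coefficients. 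The lc places of both pairs are exactly the quasi-monomial valuations on strata of $\Delta_W^{=1}$. Pushforward is linear on these cones, so $\Sigma(W,\Delta_W^{=1})$ is a common subdivision of $\Sigma(X,D_X)$ and $\Sigma(Y,D_Y)$.

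In (3), the canonical bundle formula does not apply as you use it, because for a Mori fibration $-(K_X+D_X)$ is $f$-ample rather than $f$-trivial. You would first have to perturb by a general relatively ample divisor. You would then still need to show that $(Y,D_Y)$ is lc, and that every lc center of $(Y,D_Y)$ is the image of a stratum of $D_X^{=1}$ of the same codimension. Since $Y$ is $\Qq$-factorial, that is what qdlt amounts to.
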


\begin{rem}
In the following, $\Sigma(f):\Sigma(X,D_X)\to\Sigma(Y,D_Y)$ is defined using the pushforward of valuations as in Proposition \ref{prop:tdlt-morphism} (5). 

Let $(X,D_X)$ be a qdlt pair, and let $f:(Y,D_Y)\to (X,D_X)$ be a qdlt modification. Since $f$ is crepant, $\Sigma(f)$ is bijective. Hence $\Sigma(f)$ makes $\Sigma(Y,D_Y)$ a subdivision of $\Sigma(X,D_X)$. Assume that there is an effective $\Qq$-Cartier divisor $\Delta_X$ such that $D_X^{=1}=\Supp \Delta_X$ and $D_Y^{=1}=\Supp f^*\Delta_X$. Then $\Sigma(f)$ induces a PL-map \[\Dd(D_Y^{=1})=\{v\in\Sigma(Y,D_Y)\mid v(f^*\Delta_X)=1\}\to \Dd(D_X^{=1})=\{v\in\Sigma(X,D_X)\mid v(\Delta_X)=1\}.\] 
Hence $\Dd(D_X^{=1})$ and $\Dd(D_Y^{=1})$
not merely PL-homeomorphism as in Proposition \ref{prop:dFKX} (1), but $\Dd(D_Y^{=1})$ is a subdivision of $\Dd(D_X^{=1})$.

In the setting of Proposition \ref{prop:dFKX} (2).  Let $\sigma_T$ be a cone in $\Sigma(Y,D_Y)$, where $T$ is an lc center of $(Y,D_Y)$. Then $T$ is the pushforward of a non-contracted lc center $S$ of $(X,D_X)$. Hence, the MMP $p$ is isomorphic at the generic point of $S$, and we may identify $\sigma_S$ with $\sigma_T$. Under this identification, $\Sigma(Y,D_Y)$ is naturally a sub-cone complex of $\Sigma(X,D_X)$. This induces $\Dd(D_Y^{=1})$ a subcomplex of $\Dd(D_X^{=1})$.

In the setting of Proposition \ref{prop:dFKX} (3). By the proof of \cite[Proposition 40]{de2017dual}, which goes back to \cite[Chapter 5]{hogadi2009degenerations}, we have $\Sigma(f):\Sigma(X,D_X)\to \Sigma(Y,D_Y)$ is isomorphic as real cone complexes. This isomorphism induces the identification $\Dd(D_X^{=1})=\Dd(D_Y^{=1})$.
\end{rem}

\begin{lem}\label{lem:simplicial}
Let $(X,D)$ be a quasi-projective qdlt pair. Then there is a $\Qq$-factorial dlt modification $\pi:(Y,D_Y)\to(X,D_X)$ such that $\Dd(D_Y)$ is a subdivision of $\Dd(D_X)$ and $\Dd(D_Y)$ is simplicial.  
\end{lem}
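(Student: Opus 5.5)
The plan is to realize the barycentric subdivision of $\Sigma(X,D_X)$ by a crepant modification, and then pass to a $\Qq$-factorial dlt model without changing the complex. The barycentric subdivision of any $\Delta$-complex (more generally, of a cone complex) is simplicial, and its cones are smooth after a further regular refinement. Since the dual complex of a dlt pair is the link of $\Sigma$, subdividing $\Sigma$ subdivides $\Dd$.

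\textbf{Step 1: a subdivision of the cone complex.} Since $(X,D_X)$ is qdlt, $\Sigma(X,D_X)$ consists of simplicial cones glued along faces, and its link is $\Dd(D_X^{=1})$. Take the barycentric subdivision $\Sigma_1$ of $\Sigma(X,D_X)$, where the rays are chosen rationally in each cone. Then refine $\Sigma_1$ further, without adding new rays to the link structure beyond a second barycentric-type star subdivision, to a subdivision $\Sigma'$ whose cones are all smooth with respect to the lattices $N_S$. This is the standard toroidal resolution (KKMS), carried out compatibly on faces so that it is a subdivision of the whole complex. The resulting link $\Dd'$ is a subdivision of $\Dd(D_X^{=1})$ that is again simplicial, because a subdivision of a simplicial complex (here, the barycentric one) by stellar subdivisions stays simplicial.

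\textbf{Step 2: realizing it geometrically.} Apply Proposition \ref{prop:tdlt-basic}(6), noting that qdlt pairs are tdlt. This gives a tdlt modification $f:(Y,D_Y)\to(X,D_X)$ with $\Sigma(Y,D_Y)=\Sigma'$. Since all cones are smooth, $(Y,D_Y)$ is dlt over the toroidal locus, and all lc centers meet that locus. The modification is projective because the subdivision is constructed by stellar subdivisions, i.e.\ weighted blowups, and $X$ is quasi-projective. By the Remark after Proposition \ref{prop:dFKX}, $\Dd(D_Y^{=1})$ is then the subdivision $\Dd'$ of $\Dd(D_X^{=1})$.

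\textbf{Step 3: $\Qq$-factoriality, the main obstacle.} $Y$ need not be $\Qq$-factorial away from the toroidal locus. To handle this, take a small $\Qq$-factorialization $g:Y'\to Y$, which exists for dlt pairs by BCHM, and set $D_{Y'}=g^{-1}_*D_Y$. Then $(Y',D_{Y'})$ is $\Qq$-factorial dlt. I expect the difficulty to be checking that $g$ does not alter the dual complex. Near the generic points of lc centers, $Y$ is toroidal with smooth cones, hence smooth, so $g$ is an isomorphism there. Because $g$ is small and crepant, the lc centers correspond bijectively, so $\Dd(D_{Y'})=\Dd(D_Y^{=1})$. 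Compose $\pi=f\circ g$ to conclude.
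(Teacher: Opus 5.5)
Your proof is correct. It uses the same ingredients as the paper: a regular toroidal refinement, a barycentric subdivision, the extension lemma of de Fernex--Koll\'ar--Xu, and a small $\Qq$-factorial modification. The difference is that you apply them in the opposite order and package them differently.

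\textbf{Comparison with the paper.} The paper first takes a toroidal dlt modification $(Y',D_{Y'})$ as in \cite[Proposition 37]{de2017dual}, so the pair is log smooth over the toroidal locus. It then produces the barycentric subdivision geometrically, by successively blowing up the strata of this log smooth pair \cite[Remark 10]{de2017dual}. It extends that modification to all of $Y'$ via \cite[Lemma 36]{de2017dual}, and finally $\Qq$-factorializes.

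You instead do all the combinatorics first: barycentric subdivision of the simplicial cone complex, then a regular refinement by stellar subdivisions. You realize the result in a single step through Proposition \ref{prop:tdlt-basic}(6).

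\textbf{What each order buys.} In the paper's order, the simpliciality step happens on a log smooth pair. There, blowing up strata stays snc and its effect on the dual complex is a standard computation, so no statement about refinements of simplicial complexes is needed. Your order needs one extra fact, which is true: any further linear subdivision of a simplicial complex is still simplicial. In exchange, you need only one realization step. Note that Proposition \ref{prop:tdlt-basic}(6) is proved in the paper only by reference to \cite[Lemmas 36, 39]{de2017dual}. So your argument ultimately rests on the same extension lemma that the paper invokes directly.

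\textbf{Three small corrections.}
\begin{enumerate}
\item The barycentric subdivision of an arbitrary $\Delta$-complex need not be simplicial; a loop with one vertex and one edge is a counterexample. It is simplicial for regular complexes. That suffices here, because each codimension-$c$ stratum of a qdlt pair lies in $c$ distinct components of $D_X^{=1}$ (Lemma \ref{lem:qdltcenter}).
\item Weighted blowups exist only over the toroidal locus. Projectivity of the global modification therefore comes from the MMP construction behind Proposition \ref{prop:tdlt-basic}(6), not from the stellar subdivisions alone.
\item Your claim that the small $\Qq$-factorialization $g$ is an isomorphism near the generic points of lc centers needs a line of justification. A small projective birational morphism is an isomorphism over the $\Qq$-factorial locus. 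Indeed, a $g$-ample divisor $A$ satisfies $A=g^*g_*A$ there, since the difference is $g$-exceptional, hence zero. So $A$ is $g$-trivial and $g$ is finite there.
\end{enumerate}
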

\begin{proof}
Let $\pi':(Y',D_{Y'})\to (X,D_X)$ be a toroidal dlt modification as in \cite[Proposition 37]{de2017dual}. Let $X^0$ be the toroidal locus of $(X,D_X)$, let $Y'^0=\pi'^{-1}X^0$ and $D_{Y'}|_{Y'^0}=D_{Y'^0}$, then $(Y'^0,D_{Y'^0})$ is log smooth and $\Dd(D_{Y'})$ is a subdivision of $\Dd(D_X)$. Let $\pi'^0:(Y''^0,D_{Y''^0})\to(Y'^0,D_{Y'}|_{Y'^0})$ be the successive blowup of lc centers as in \cite[Remark 10]{de2017dual}. Then $\Dd(D_{Y''^0})$ is the barycentric subdivision of $\Dd(D_{Y'^0})=\Dd(D_{Y'})$. In particular, $\Dd(D_{Y''^0})$ is simplicial. By \cite[Lemma 36]{de2017dual}, there is a dlt modification $\pi'':(Y'',D_Y'')\to(Y',D_{Y'})$ extending $\mu$. Let $\pi''':(Y,D_Y)\to(Y'',D_{Y''})$ be a small $\Qq$-factorial modification \cite[Corollary 1.37]{Kollar2013sing}. Then $\Dd(D_Y)$ is identified with $\Dd(D_{Y''})$. Hence $\pi:=\pi'\circ\pi''\circ\pi''':(Y,D_Y)\to(X,D_X)$ is our desired.
\end{proof}

\begin{set}\label{setting}
Let $(X,D_X)$ be a projective dlt Calabi-Yau pair, let $(Y,D_Y)$ be a toric compactification of a quasi-abelian variety, and let $f:X\to Y$ be a fibration. Assume that $f^{-1}D_Y\subset D_X^{=1}$ and cones in $\Sigma(Y,D_Y)$ are simplicial. In particular, $Y$ is $\Qq$-factorial.
\end{set}

\begin{lem}\label{lem:slc}
In Setting \ref{setting}, the following hold.
\begin{enumerate}
\item There is a PL-map $\Dd(f):\Dd(D_X^v)\to\Dd(D_Y)$ induced by $f$.

\item Horizontal lc centers are precisely the strata of $(D_X^{h})^{=1}$.
   
\item The map $\Dd(f)$ is compatible with adjunction. More precisely, assume further that $\Dd(D_X)$ is simplicial. Let $S$ be a horizontal lc center such that $f|_{S}$ has connected fibers. Let $D_S:=\Diff^*_SD_X$. Then $\Dd(D^v_S)$ is sub-complex of $\Dd(D_X^v)$ and there is a commutative diagram 
\begin{center}
\begin{tikzcd}
\Dd(D_S^v) \arrow[r, hook] \arrow[d, "\Dd(f|_S)"] & \Dd(D_X^v) \arrow[d, "\Dd(f)"] \\
\Dd(D_Y) \arrow[r,  Rightarrow, no head]   & \Dd(D_Y).                      
\end{tikzcd}
\end{center}

\end{enumerate}   
\end{lem}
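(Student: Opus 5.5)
For (1), I will build $\Dd(f)$ from $\Sigma(f)$. By Proposition \ref{prop:tdlt-morphism}(5) (a dlt pair is tdlt), $f$ induces a morphism of cone complexes $\Sigma(f):\Sigma(X,D_X)\to\Sigma(Y,D_Y)$; on a cone $\sigma_S$ it is the dual of the pullback $f^*:M_{f(S)}\to M_S$. By Proposition \ref{prop:tdlt-morphism}(3), $D_X^v=f^{-1}D_Y$ is reduced, so $(X,D_X^v)$ is dlt and $\Sigma(X,D_X^v)$ is the subcomplex of cones $\sigma_S$ with $S$ a stratum of $D_X^v$. Since $Y$ is $\Qq$-factorial, choose an effective $\Qq$-Cartier $\Delta_Y$ with $\Supp\Delta_Y=D_Y$, e.g.\ $\Delta_Y=D_Y$. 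Then $\Supp f^*\Delta_Y=D_X^v$. Realize
\[\Dd(D_X^v)=\{v\in\Sigma(X,D_X^v)\mid v(f^*\Delta_Y)=1\},\qquad \Dd(D_Y)=\{w\in\Sigma(Y,D_Y)\mid w(\Delta_Y)=1\}.\]
Since $\Sigma(f)(v)(\Delta_Y)=v(f^*\Delta_Y)$, the map $\Sigma(f)$ carries the first slice into the second. It is linear on each cone, so the restriction is a PL-map $\Dd(f)$. It remains to check that no nonzero $v\in\Sigma(X,D_X^v)$ maps to $0$. This holds because $v(f^*\Delta_Y)>0$ whenever $v$ lies in a cone spanned by vertical divisors.

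For (2), let $S$ be a horizontal lc center. Its generic point maps into $G=Y\setminus D_Y$, and $f^{-1}D_Y\subset D_X^{=1}$ is exactly the vertical part. Hence the components of $D_X^{=1}$ containing $S$ are horizontal, and $S$ is a stratum of $(D_X^h)^{=1}$. Conversely, I need that every stratum of $(D_X^h)^{=1}$ is horizontal. I would argue by adjunction. Let $S$ be such a stratum and let $V$ be a vertical lc center; by Proposition \ref{prop:tdlt-morphism}(4), $f(V)$ is an lc center of $(Y,D_Y)$. If $S$ were vertical, $f(S)$ would lie in some component of $D_Y$, so $S\subset f^{-1}D_Y=D_X^v$. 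By the dlt condition (Lemma \ref{lem:qdltcenter}), $S$ would then be contained in a component of $D_X^v$ in addition to the horizontal components cutting it out. This contradicts $\codim S$ being the number of components containing it. The same counting gives the stratification statement.

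For (3), $(S,D_S)$ is dlt Calabi-Yau, and $f|_S:S\to Y$ is a fibration because $S$ is horizontal with connected fibers. Moreover $(f|_S)^{-1}D_Y=(f^{-1}D_Y)|_S\subset D_S^{=1}$, so Setting \ref{setting} applies to $f|_S$ and (1) gives $\Dd(f|_S)$. Since $\Dd(D_X)$ is simplicial, the discussion after the definition of dual complexes gives $\Dd(D_S^{=1})=\Link(\sigma_S,\Dd(D_X^{=1}))$. Let $S=\bigcap_{j\in J}D_j$ with all $D_j$ horizontal. The vertical strata of $D_S$ are $S\cap\bigcap_{k\in K}D_k$ with the $D_k$ vertical. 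Sending such a stratum to the stratum $\bigcap_{k\in K}D_k$ of $D_X^v$ embeds $\Dd(D_S^v)$ as a subcomplex of $\Dd(D_X^v)$. For commutativity, at the generic point of $S\cap\bigcap_K D_k$ the restriction $M_{\bigcap_K D_k}\to M_{S\cap\bigcap_K D_k}$ (restricting Cartier divisors supported on vertical components) is an isomorphism, and the pullback from $Y$ commutes with restriction to $S$. Dually, $\Sigma(f|_S)$ agrees with $\Sigma(f)$ on the corresponding cones. The normalizations by $f^*\Delta_Y$ and $(f|_S)^*\Delta_Y$ also agree, which gives the commutative square.

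I expect the main obstacle to be (2) and the identification of cones in (3). One has to check that restricting vertical Cartier divisors to $S$ is an isomorphism of lattices, not just an injection of finite index. My approach is to use that $(X,D_X)$ is snc near the generic point of $S\cap\bigcap_K D_k$, so the $D_k$ restrict to the distinct, transversal components of $D_S$.
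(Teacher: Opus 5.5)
Your proposal is correct and follows essentially the same route as the paper. In (1) you restrict $\Sigma(f)$ to the slices $v(f^*D_Y)=1$ and $w(D_Y)=1$. In (2) you use Proposition \ref{prop:tdlt-morphism}(4) together with the dlt count of the components passing through a stratum. In (3) you identify $\sigma_T$ with $\sigma_{T^v}$ and check $v(L|_S)=\tilde v(L)$; this is exactly your snc lattice-restriction argument.

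One harmless slip: your aside that $(X,D_X^v)$ is dlt would need $K_X+D_X^v$ to be $\Qq$-Cartier, which is not automatic without $\Qq$-factoriality. You never use it, though, since $\Sigma(X,D_X^v)$ is just the sub-cone complex of $\Sigma(X,D_X)$ on strata of $D_X^v$.
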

\begin{proof}

(1) By Proposition \ref{prop:tdlt-morphism}, we have that $f:(X,D_X)\to Y$ is locally stable over $Y\setminus D_Y$, $D_X^v$ is reduced, $D_X^v=\Supp f^*D_Y$ and there is a morphism of cone complexes $\Sigma(f):\Sigma(X,D_X)\to\Sigma(Y,D_Y)$. Let $\Sigma(X,D_X^v)$ be the sub-cone complex of $\Sigma(X,D_X)$ corresponding to $\Dd(D_X^v)$. Then $\Sigma(f)$ induces a PL-map
\[\Dd(D_X^v)=\{v\in \Sigma(X,D_X^v)\mid v(f^*D_Y)=1\}\to \Dd(D_Y)=\{v\in \Sigma(Y,D_Y)\mid v(D_Y)=1\}.\] 

(2) Let $S\subset X$ be a component of an intersection of components of $(D_X^h)^{=1}$. Then $S$ is contained in $\codim S$ components of $(D_X^h)^{=1}$. Hence $S$ is not contained in any component of $D_X^v$. Hence $S$ is horizontal by Proposition \ref{prop:tdlt-morphism} (4). The converse is clear.

(3) 
It is clear that $K_S+D_S\sim_{\Qq}(f|_{S})^*(K_Y+D_Y)$. Since $f^{-1}D_Y\subset D_X^{=1}$, $(f|_{S})^{-1}D_Y\subset D_S^{=1}$. Hence $f|_{S}:(S,D_S)\to Y$ satisfies the condition of the Setting, and the components of $D_S^v$ are precisely the intersections of $S$ with components of $D_X^v$. Hence for any stratum $T$ of $(S,D_S^v)$, $T=S\cap T^v$ for an unique stratum $T^v$ of $(X,D_X^v)$. Write $D_X^v=\sum_{i\in I} D_i$. The map $\sigma_T\to\sigma_{T^v}$, $v=\sum_i\lambda_iv_{D_i|_ S}\mapsto \tilde{v}=\sum_i\lambda_iv_{D_i}$ identify $\sigma_T$ with $\sigma_{T^v}$ and makes $\Sigma(S,D_S^v)$ a sub-cone complex of $\Sigma(X,D_X^v)$. Let $L$ be a $\Qq$-Cartier divisor supported on $D_X^v$. Then $v(L|_{S})= \tilde{v}(L)$. Hence, the following diagram commutes.

\begin{center}
\begin{tikzcd}
v \arrow[d, maps to] \arrow[rrr, maps to, bend left] & {\Sigma(S,D_S^v)} \arrow[r, hook] \arrow[d, "\Sigma(f|_{S})"] & {\Sigma(X,D_X^v)} \arrow[d, "\Sigma(f)"] & \tilde{v} \arrow[d, maps to]        \\
(\Delta\mapsto v((f|_{S})^*\Delta)                   & {\Sigma(Y,D_Y)} \arrow[r, Rightarrow, no head]                & {\Sigma(Y,D_Y)}                          & (\Delta\mapsto\tilde{v}(f^*\Delta))
\end{tikzcd}    
\end{center}
Then $\Dd(D_S^v)=\{v\in\Sigma(S,D_S^v)\mid v((f|_{S})^*D_Y)=1\}$ is naturally a subcomplex of $\Dd(D_X^v)=\{v\in\Sigma(X,D_S^v)\mid v(f^*D_Y)=1\}$ and the diagram in (3) commutes.
\end{proof}

\begin{prop}\label{prop:inductivezero}
In Setting \ref{setting}, the map $\Dd(f):\Dd(D_X^v)\to\Dd(D_Y)$ is a homotopy equivalence.  
\end{prop}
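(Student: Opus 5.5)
We argue by induction on $\dim X$, following the outline in the introduction and the strategy of \cite[Theorem 4]{de2017dual}. There are two regimes.

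\textbf{Vertical case.} Suppose $D_X^{=1}$ is vertical. Then Proposition \ref{prop:vertical}(1) says that $\Sigma(f)$ makes $\Sigma(X,D_X)$ a subdivision of $\Sigma(Y,D_Y)$ as real cone complexes. Since $D_X^v=\Supp f^*D_Y$, the slice $\{v(f^*D_Y)=1\}$ is carried onto the slice $\{v(D_Y)=1\}$. Hence $\Dd(f)$ is a PL-homeomorphism, in particular a homotopy equivalence. This is the base of the induction; it covers for instance the case where $X$ has no horizontal lc centers.

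\textbf{General case.} Now suppose $(D_X^h)^{=1}\neq 0$. Using Lemma \ref{lem:simplicial}, we may pass to a $\Qq$-factorial dlt model with simplicial dual complex. This only subdivides $\Dd(D_X^v)$ compatibly with $\Sigma(f)$, so it does not change the homotopy type of $\Dd(f)$.

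Next run a $(K_X+D_X^v+(D_X^h)^{<1})$-MMP over $Y$, i.e.\ a $(X,D_X-(D_X^h)^{=1})$-MMP. Since $K_X+D_X\sim_{\Qq,Y}0$, this is a $-(D_X^h)^{=1}$-MMP over $Y$. The components of $(D_X^h)^{=1}$ are horizontal, so they are not numerically trivial over the generic point of $Y$. Hence the MMP terminates with a Mori fibre space $X'\to Y'$ over $Y$ on which the horizontal lc part is relatively ample.

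Each step preserves the vertical part $D_X^v=f^*D_Y$, which is numerically $f$-trivial with support $D^{v,=1}$. By Proposition \ref{prop:dFKX}(2), applied to the vertical boundary, each step induces a collapse of $\Dd(D_X^v)$ onto $\Dd(D_{X'}^v)$. This collapse commutes with $\Dd(f)$, because $\Dd(D_{X'}^v)$ is a subcomplex on which the map is the restriction. So it suffices to prove the statement for $X'$.

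At the Mori fibration $X'\to Y'$, the divisor $(D_{X'}^{v})^{=1}$ is vertical for $X'\to Y'$. Proposition \ref{prop:dFKX}(3), together with the remark following it, identifies $\Dd(D_{X'}^v)$ with the dual complex of the induced vertical boundary on $Y'$, compatibly with the maps to $\Dd(D_Y)$. The base $Y'$ carries an induced dlt Calabi--Yau structure by the canonical bundle formula (Proposition \ref{prop:tdlt-morphism}(1)), and it maps to $Y$ in Setting \ref{setting}. Since $\dim Y'<\dim X'$, induction applies, possibly after a further dlt modification of $Y'$ and Lemma \ref{lem:simplicial}.

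\textbf{Expected obstacle.} The issue is the case where the Mori fibration is birational over $Y$ along horizontal components, so that $\dim Y'$ may equal $\dim X'$ only if $Y'\to Y$ is birational. In that situation the horizontal boundary has been entirely contracted or pushed down to $Y'$. There we would instead use adjunction to a horizontal lc center, via Lemma \ref{lem:slc}(3), to apply induction to $(S,D_S)$ with $S$ a component of $(D_X^h)^{=1}$. We would then glue: write $\Dd(D_X^v)$ as the union of the star of the vertical complex over $S$ and its complement, and use the Mayer--Vietoris / gluing lemma for homotopy equivalences.

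I expect this bookkeeping to be the hardest step: verifying that the collapses and identifications are compatible with $\Dd(f)$, and that after the MMP one lands in Setting \ref{setting} again. This requires the simplicial hypothesis on $\Sigma(Y,D_Y)$ and $\Qq$-factoriality of the new base. The first thing to try is to arrange that the MMP ends with $(D^h)^{=1}$ becoming empty, which reduces directly to the vertical case.
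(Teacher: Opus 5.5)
Your architecture matches the paper's proof: the vertical case via Proposition \ref{prop:vertical}, then $\Qq$-factorialize, run an MMP over $Y$ that discards the horizontal boundary until you reach a Mori fibre space $Z\to W$, collapse $\Dd(D_X^v)$ onto $\Dd(D_Z^v)$ by Proposition \ref{prop:dFKX}(2), identify $\Dd(D_Z^v)$ with a complex on $W$ by Proposition \ref{prop:dFKX}(3), and induct on dimension. Keeping $(D_X^h)^{<1}$ in the boundary, instead of the paper's $(X,D_X^v)$-MMP, is harmless. The gap is in the inductive step on the base, and there are two missing pieces.

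\textbf{First: no Calabi--Yau pair on the base.} Proposition \ref{prop:tdlt-morphism}(1) does not apply to $W$. It assumes the base is a toric compactification of a quasi-abelian variety, and that is exactly what forces $B=D_Y$ and $M\sim_{\Qq}0$. For $\pi\colon(Z,D_Z)\to W$ with arbitrary $W$, the canonical bundle formula only gives a generalized pair $(W,B_W+M_W)$. Setting \ref{setting} and your induction hypothesis concern ordinary pairs. The paper needs the b-semiampleness result \cite[Theorem 1.5]{bakker2025baily} to find an effective $B_W'\sim_{\Qq}M_W$ with $(W,D_W:=B_W+B_W')$ lc and $D_W^{=1}=B_W^{=1}$. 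It then checks $g^{-1}D_Y=\pi(D_Z^v)\subset B_W^{=1}$, hence $D_W^v=g^{-1}D_Y$. Only then is $\Dd(D_W^v)$ the complex that Proposition \ref{prop:dFKX}(3) identifies with $\Dd(D_Z^v)$. The resulting pair is only lc, with $(W,D_W^v)$ qdlt; it is not dlt as you assert.

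\textbf{Second: the dlt modification of the base is not crepant for the vertical boundary.} Saying ``induction applies after a further dlt modification'' skips the main technical point. Take a $\Qq$-factorial dlt modification $\pi\colon(W',D_{W'})\to(W,D_W)$. Induction gives that $\Dd(g')\colon\Dd(D_{W'}^v)\to\Dd(D_Y)$ is a homotopy equivalence, but you need this for $\Dd(g)$ on $\Dd(D_W^v)$.
\begin{itemize}
\item $\pi$ is crepant for $D_W$ but not for $D_W^v$. An exceptional divisor over an lc centre contained in both $\Supp D_W^h$ and $D_W^v$ is vertical, so it lies in $D_{W'}^v$. Yet its discrepancy with respect to $(W,D_W^v)$ is $>-1$.
\item Consequently there is no subdivision map $\Dd(D_{W'}^v)\to\Dd(D_W^v)$ to compare the two complexes.
\item The paper fixes this with a further MMP over $W$ for $(W',D_{W'}^v+(1-\epsilon)\Ex'(\pi))$, where $\Ex'(\pi)$ is the part of $\Ex(\pi)$ not in $D_{W'}^v$. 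This produces $W''$ with $(W'',D_{W''}^v)\to(W,D_W^v)$ crepant.
\item Then $\Dd(D_{W'}^v)$ collapses onto $\Dd(D_{W''}^v)$ by Proposition \ref{prop:dFKX}(2), and $\Dd(D_{W''}^v)\to\Dd(D_W^v)$ is a subdivision. The resulting commutative triangle transfers the homotopy equivalence from $\Dd(g')$ to $\Dd(g)$.
\end{itemize}
The same phenomenon makes your opening claim inaccurate: Lemma \ref{lem:simplicial} does not ``only subdivide $\Dd(D_X^v)$'', because blowing up mixed strata creates new vertical divisors. A small $\Qq$-factorial modification, as in the paper, avoids this.

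\textbf{Your ``expected obstacle'' does not arise.} A Mori fibre space always has $\dim W<\dim Z$, so induction on dimension applies to the base in every case, and no adjunction or Mayer--Vietoris detour is needed. Nor can $(D^h)^{=1}$ become empty: it is relatively ample on the output of your MMP.
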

\begin{proof}

\textit{Step 1}. If $D_X^v=D_X$, then by Proposition \ref{prop:vertical}, $\Sigma(f):\Sigma(X,D_X^v)\to\Sigma(Y,D_Y)$ is a subdivision. In particular, $\Dd(f):\Dd(D_X^v)\to\Dd(D_Y)$ is a homotopy equivalence. Hence we may assume that $D_X^v\neq D_X$. Let $\pi:(X',D_{X'})\to (X,D_X)$ be a small $\Qq$-factorial modification \cite[Corollary 1.37]{Kollar2013sing}. Since $\pi$ is isomorphic over the log smooth locus of $(X,D_X)$. We may identify $\Sigma(X',D^v_{X'})$ with $\Sigma(X,D^v_X)$. Hence we may assume from the beginning that $X$ is $\Qq$-factorial. Then $K_X+D_X^v\sim_{\Qq}-D_X^h$ is not pseudoeffective over $Y$. 
 
\textit{Step 2}. By running a $(X,D_X^v)$-MMP over $Y$, we have a Mori fibration $\pi:Z\to W$ over $Y$. Let $g:W\to Y$ be the induced fibration.

\begin{center}
\begin{tikzcd}
X \arrow[dd, "f"] \arrow[r, "\text{MMP}", dotted] & Z \arrow[d, "\pi"] \\
 & W \arrow[ld, "g"]  \\
Y   &    
\end{tikzcd}    
\end{center}

Let $D_Z$ be the pushforward of $D_X$ on $Z$. Then $(Z,D_Z)$ is $\Qq$-factorial lc Calabi-Yau, $D_Z^v=(g\circ \pi)^{-1}D_Y$ is the pushforward of $D_X^v$, and $(Z,D_Z^v)$ is dlt. Let $K_Z+D_Z\sim_{\Qq}\pi^*(K_W+B_W+M_W)$ be the canonical bundle formula of $\pi:(Z,D_Z)\to W$. By \cite[Theorem 1.5]{bakker2025baily}, there is an effective $\Qq$-divisor $B_W'\sim_{\Qq}M_W$ such that $(W,D_W:=B_W+B_W')$ is lc and $D_W^{=1}=B_W^{=1}$. By the construction of the discriminant divisor $B_W$, we have $g^{-1}D_Y=\pi(D_Z^v)\subset B_W^{=1}=D_W^{=1}$. Hence by (the same argument of) Proposition \ref{prop:tdlt-morphism} (3), $D_W^v=g^{-1}D_Y=\pi(D_Z^v)$. By Proposition \ref{prop:dFKX}(3), $W$ is $\Qq$-factorial, $(W,D_W^v)$ is qdlt and $\Dd(D_W^v)$ is naturally identified to $\Dd(D_Z^v)$ under $\Dd(\pi)$. 
 
Since $D_X^v=f^{-1}D_Y$ fully supports the numerically $f$-trivial effective divisor $f^*D_Y$, by Proposition \ref{prop:dFKX}(2), $\Dd(D_X^v)$ collapses to $\Dd(D_Z^v)$. Hence the inclusion $\Dd(D_Z^v)\to\Dd(D_X^v)$ is a homotopy equivalence. From our definition of $\Dd(f)$, $\Dd(g\circ \pi)$, and $\Dd(g)$, the following diagram commutes.

\begin{center}
\begin{tikzcd}
\Dd(D_X^v) \arrow[dd, "\Dd(f)"] & \Dd(D_Z^v) \arrow[l, hook'] \arrow[d, Rightarrow,no head] \\
& \Dd(D_W^v) \arrow[ld, "\Dd(g)"]\\
\Dd(D_Y)                        &           
\end{tikzcd}    
\end{center}
Hence, to show that $\Dd(f):\Dd(D_X^v)\to\Dd(D_Y)$ is a homotopy equivalence, it suffices to show that $\Dd(g):\Dd(D_W^v)\to \Dd(D_Y)$ is a homotopy equivalence.

\textit{Step 3}. 
We want to apply induction on dimension to $(W,D_W)$. The problem is that $(W,D_W)$ may not be dlt. 

Let $\pi:(W',D_{W'})\to (W,D_W)$ be a $\Qq$-factorial dlt modification and let $g'=g\circ \pi$. Then $g':(W',D_{W'})\to Y$ satisfies the Setting \ref{setting}. By induction on dimension, $\Dd(g'):\Dd(D_{W'}^v)\to\Dd(D_Y)$ is a homotopy equivalence. We show that $\Dd(g)$ is also a homotopy equivalence. Note that since $\pi:(W',D_{W'}^v)\to (W,D_W^v)$ may not be crepant, there may not exist a map $\Dd(\pi):\Dd(D_{W'}^v)\to\Dd(D_W^v)$. Hence we can not say that since $\Dd(\pi)$ is a homotopy equivalence, then $\Dd(g)$ is a homotopy equivalence.

Let $\Ex'(\pi)=\sum_{E\subset \Ex(\pi)\setminus D_{W'}^v}E$ be the sum of components of $\Ex(\pi)$ which are not contained in $D_{W'}^v=\pi^{-1}D_W^v$. Since $(W,D_W^v)$ is qdlt, its non-klt locus is $D_W^v$. Hence if $E$ is a component of $\Ex'(\pi)$, then $a(E,W,D_W^v)>-1$. Let $0<\epsilon\ll1$, then $E_Y$ in the following formula is effective, exceptional and $\Ex'(\pi)\subset \Supp E_Y$. \[K_{W'}+D_{W'}^v+(1-\epsilon)\Ex'(\pi)=\pi^*(K_W+D_W^v)+E_Y\]
Let $(W'',D^v_{W''})$ be a good minimal model of $(W',D_{W'}^v+(1-\epsilon)\Ex'(\pi))$ over $W$ and let $\pi':W''\to W$ be the induced birational morphism. Then $\pi':(W'',D^v_{W''})\to (W,D_W^v)$ is crepant, $D^v_{W''}$ is reduced, and $D^v_{W''}=(g\circ \pi')^{-1}D_Y$. Since $D_{W'}^v$ fully supports the $\pi$-numerically trivial effective divisor $(g\circ\pi)^*D_Y$, by Proposition \ref{prop:dFKX}(2), $\Dd(D_{W'}^v)$ collapses to $\Dd(D_{W''}^v)$. Since $\pi':(W'',D_{W''}^v)\to (W,D_W^v)$ is crepant, $g^{-1}D_Y=D_W^v$, and $(g\circ\pi')^{-1}D_Y=D_{W''}^v$, $\Dd(\pi'):\Dd(D_{W''}^v)\to\Dd(D_W^v)$ is a subdivision. Since the following diagram commutes, we conclude that $\Dd(g):\Dd(D_W^v)\to\Dd(D_Y)$ is a homotopy equivalence.

\begin{center}
\begin{tikzcd}
\Dd(D_{W'}^v) \arrow[dd, "\Dd(g')"] & \Dd(D_{W''}^v) \arrow[l, hook'] \arrow[d, "\Dd(\pi')"] \\
  & \Dd(D_W^v) \arrow[ld, "\Dd(g)"]\\
\Dd(D_Y) &                                              
\end{tikzcd}    
\end{center} 
\end{proof}

\begin{prop}\label{prop:connected}
In Setting \ref{setting}. 
Assume further that $f|_{S}:S\to Y$ has connected fibers for any horizontal lc center $S$. Then there is a homotopy equivalence $\Dd(D_X^{=1})\cong \Dd(D_X^{h,=1})*\Dd(D_Y)$.
\end{prop}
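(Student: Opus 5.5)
First, reduce to the case where $\Dd(D_X)$ is simplicial. Apply Lemma \ref{lem:simplicial} to get a $\Qq$-factorial dlt modification $(X',D_{X'})\to(X,D_X)$ whose dual complex is a simplicial subdivision of $\Dd(D_X^{=1})$. The composite $X'\to Y$ still satisfies Setting \ref{setting}. Its horizontal lc centers are birational to, or dominate, those of $X$ compatibly with restriction to $Y$, so the connectedness hypothesis survives. The horizontal complex $\Dd(D_{X'}^{h,=1})$ is likewise a subdivision of $\Dd(D_X^{h,=1})$, since the modification is an isomorphism over the generic point of $Y$. Hence we may assume $\Dd:=\Dd(D_X^{=1})$ is simplicial.

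By Lemma \ref{lem:slc}(2), the vertices of $\Dd$ split into horizontal ones $H$, the components of $(D_X^h)^{=1}$, and vertical ones $V$, the components of $D_X^v$. The full subcomplex on $H$ is $\Dd_h:=\Dd(D_X^{h,=1})$ and the full subcomplex on $V$ is $\Dd_v:=\Dd(D_X^v)$. Every simplex of $\Dd$ is a join $\tau*\rho$ with $\tau\in\Dd_h\cup\{\emptyset\}$ and $\rho\in\Dd_v\cup\{\emptyset\}$, so $\Dd$ is a subcomplex of $\Dd_h*\Dd_v$.

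Next, describe $\Dd$ fibrewise over $\Dd_h$. For a simplex $\tau\in\Dd_h$ with stratum $S_\tau$, which is a horizontal lc center, the set of $\rho$ with $\tau*\rho\in\Dd$ is exactly $\Link(\tau,\Dd)\cap\Dd_v=\Dd(D_{S_\tau}^v)$, where $D_{S_\tau}=\Diff^*_{S_\tau}D_X$. Since $f|_{S_\tau}$ has connected fibres, Lemma \ref{lem:slc}(3) and Proposition \ref{prop:inductivezero} applied to $f|_{S_\tau}$ show that $\Dd(D_{S_\tau}^v)\hookrightarrow\Dd_v\xrightarrow{\Dd(f)}\Dd(D_Y)$ is a homotopy equivalence. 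By Proposition \ref{prop:inductivezero} for $f$ itself, $\Dd(f)$ is also a homotopy equivalence, so every inclusion $\Dd(D_{S_\tau}^v)\subset\Dd_v$ is a homotopy equivalence. These subcomplexes are decreasing in $\tau$. Write $L_\tau:=\Dd(D_{S_\tau}^v)$, and set $L_\emptyset:=\Dd_v$.

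The main step, and the main obstacle, is a purely combinatorial statement. Let $\Dd\subset\Dd_h*\Dd_v$ be the union $\bigcup_{\tau}\tau*L_\tau$ over $\tau\in\Dd_h\cup\{\emptyset\}$, with $L_\tau$ decreasing in $\tau$ and every inclusion $L_\tau\subset L_\emptyset$ a homotopy equivalence. Then the inclusion $\Dd\hookrightarrow\Dd_h*\Dd_v$ is a homotopy equivalence, and $\Dd_h*\Dd_v\simeq\Dd_h*\Dd(D_Y)$.

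I would prove this by induction on the simplices of $\Dd_h$, adding them in order of decreasing dimension. At each step, pass from $\tau*L_\tau$ to $\tau*L_\emptyset$, glued along $\partial\tau*L_\tau$ to what has already been built. Alternatively, view both complexes as homotopy colimits over the face poset of $\Dd_h$ (with cone point $\emptyset$) of the diagrams $\tau\mapsto L_\tau$ and $\tau\mapsto L_\emptyset$. A natural transformation that is an objectwise homotopy equivalence induces a homotopy equivalence of homotopy colimits. Care is needed to present $\Dd$ as such a homotopy colimit; the standard tool is the decomposition $\overline{\Star}(\tau,\Dd)=\tau*\Link(\tau,\Dd)$ from Munkres. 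Finally, the join is homotopy invariant, so $\Dd_h*\Dd_v\simeq\Dd_h*\Dd(D_Y)$ via $\Dd(f)$.
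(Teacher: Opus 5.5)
This is essentially the paper's proof. You use the same reduction to a simplicial $\Dd(D_X^{=1})$ via Lemma \ref{lem:simplicial}, the same identification $\Link(\tau,\Dd)\cap\Dd_v=\Dd(D^v_{S_\tau})$, and the same cell-by-cell gluing over the simplices of $\Dd(D_X^{h,=1})$, with Proposition \ref{prop:inductivezero} applied to each $f|_{S_\tau}$ through Lemma \ref{lem:slc}(3). The only repackaging is that you compare $\Dd$ with $\Dd_h*\Dd_v$ by inclusion, so the glued maps are automatically compatible, and apply $\id*\Dd(f)$ only at the end; the paper instead glues the maps $\id_{\sigma_i}*\Dd(f_i)$ directly into $\Dd_h*\Dd(D_Y)$.

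Two small corrections are needed.
\begin{itemize}
\item The induction must add the simplices of $\Dd_h$ faces first (increasing dimension), as the paper does. Only then is $\partial\tau*L_\tau$ already built, and the gluing lemma applies to $\partial\tau*L_\tau\hookrightarrow\partial\tau*L_\emptyset$ and $\tau*L_\tau\hookrightarrow\tau*L_\emptyset$.
\item The modification of Lemma \ref{lem:simplicial} is not an isomorphism over the generic point of $Y$, since its barycentric blowups can blow up horizontal strata. The horizontal complex is therefore only subdivided, which still suffices. The paper gets this by identifying that complex with the dual complex of the geometric generic fibre and invoking Proposition \ref{prop:dFKX}(1).
\end{itemize}
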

\begin{proof}

The connected-fiber assumption essentially says that the spring of $f:(X,D_X)\to Y$ is $Y$ \cite[Theorem-Definition 4.45]{Kollar2013sing}. By this assumption and Lemma \ref{lem:slc} (2), we may identify $\Dd(D_X^{h,=1})$ with $\Dd(D_{X_{\overline{K(Y)}}}^{=1})$, where $X_{\overline{K(Y)}}$ is the geometric generic fiber of $f$. Let $\pi:(Z,D_Z)\to (X,D_X)$ be a dlt modification as in Lemma \ref{lem:simplicial}. Then $\Dd(D_Z)$ is simplicial. By \cite[Theorem-Definition 4.45 (7)]{Kollar2013sing}, the spring of $f\circ\pi:(Z,D_Z)\to Y$ is also $Y$. Since $(Z_{\overline{K(Y)}},D_{Z_{\overline{K(Y)}}})$ and $(X_{\overline{K(Y)}},D_{X_{\overline{K(Y)}}})$ are crepant birational, they are PL-homeomorphism by Proposition \ref{prop:dFKX} (1). Hence we may assume from the beginning that $\Dd(D_X^{=1})$ is simplicial.

For a simplex $\sigma$ in $\Dd(D_X^{=1})$, we denote by $S(\sigma)$ the corresponding stratum. Let $S\subset X$ be an stratum of $(X,D_X)$. If $S$ is not a stratum of $(X,D_X^v)$, then there is an unique stratum $S^h$ of $(X,D_X^h)$ such that either $S^h=S$ or $S$ is a stratum of $(S^h,D_{S^h}^v)$. We call $S^h$ the horizontal part of $S$. Note that if $S\subset T$, then $S^h\subset T^h$. We also use $\sigma^h$ to denote the horizontal part of a simplex $\sigma$ in $\Dd(D_X^{=1})$.

Order the simplices of $\Dd(D_X^{h,=1})$, $\sigma_1,\sigma_2,\cdots,\sigma_N$, such that all proper faces of $\sigma_i$ appears before $\sigma_i$. Let $A_0=D(D_X^v)$. Let $A_i$, $1\leq i\leq N$, be the subcomplex of $\Dd(D_X^{=1})$ whose simplices correspond to lc centers whose horizontal part is either empty or one of $\{ S(\sigma_1),S(\sigma_2),\cdots,S(\sigma_i)\}$. Then $A_N=\Dd(D_X^{=1})$. Let $B_0=\Dd(D_Y)$ and let $B_{i}=B_{i-1}\cup_{\partial\sigma_i*\Dd(D_Y)}\sigma_i*\Dd(D_Y)$, $0\leq i\leq N$. Then  $B_N=\Dd(D_X^{h,=1})*\Dd(D_Y)$. We claim the following.

\begin{enumerate}
\item $A_i\setminus\Star(\sigma_i,A_i)=A_{i-1}$.

\item $\Link(\sigma_i,A_i)=\Dd(D^v_{S(\sigma_i)})$,  

\item $A_i= A_{i-1}\cup_{\partial\sigma_i*\Link(\sigma_i,A_i)}\sigma_i*\Link(\sigma_i,A_i)$, and

\item for all $0\leq i\leq N$, there is a homotopy equivalence $\phi_i:A_{i}\to B_{i}$ such that if $j\leq i$, then the following diagram is commutative. 
\begin{center}\label{diag:commu}
\begin{tikzcd}
A_i \arrow[d, "\phi_i"] & \sigma_j*\Dd(D^v_{S(\sigma_j)}) \arrow[l, hook'] \arrow[d, "\id_{\sigma_j}*\Dd(f_j)"] \\
B_i                     & \sigma_j*\Dd(D_Y) \arrow[l, hook'] 
\end{tikzcd}    
\end{center}
\end{enumerate}
In particular, if (4) holds, then there is a homotopy equivalence $\Dd(D_X^{=1})\cong\Dd(D_X^{h,=1})*\Dd(D_Y)$. Now we prove the claim.

(1) This follows from $A_i\setminus \Star(\sigma_i,A_i)=\{\eta\in A_{i}\mid \sigma_i\not\leq\eta\}=A_{i-1}$.

(2) Strata corresponding to simplices in $\Link(S(\sigma_i),A_i)$ are nonempty intersections $\bigcap_{D_j\subset D_X^v} D_j$ with $S(\sigma_i)\cap \bigcap_{D_j\subset D_X^v} D_j\neq\emptyset$. Hence (2) follows.

(3) This follows from (1), (2) and the standard simplicial complex facts: $\overline{\Star}(\sigma_i,A_i)\setminus \Star(\sigma_i,A_i)=\partial \sigma_i*\Link(\sigma_i,A_i)$ and $A_i= A_i\setminus \Star(\sigma_i,A_i)\cup_{\overline{\Star}(\sigma_i,A_i)\setminus \Star(\sigma_i,A_i)}\overline{\Star}(\sigma_i,A_i)$.

(4) We construct the homotopy equivalence $\phi_i$ inductively. Let $f_i:=f
|_{S(\sigma_i)}$, $1\leq i\leq N$. Let $\phi_0=\Dd(f):\Dd(D_X^v)\to\Dd(D_Y)$. By Proposition \ref{prop:inductivezero}, $\phi_0$ is a homotopy equivalence. Assume that we have already constructed the homotopy equivalence $\phi_{i-1}$. Let $\sigma_j\leq\sigma_i$ be a proper face. Then $S(\sigma_i)$ is a horizontal lc center of $f_j:(S(\sigma_j),D_{S(\sigma_j)})\to Y$. Consider the following diagram.

\begin{center}
\begin{tikzcd}
A_{i-1} \arrow[d, "\phi_{i-1}"] & \sigma_j*\Dd(D^v_{S(\sigma_j)}) \arrow[l, hook'] \arrow[d, "\id_{\sigma_j}*\Dd(f_j)"] & \sigma_j*\Dd(D^v_{S(\sigma_i)}) \arrow[d, "\id_{\sigma_j}*\Dd(f_i)"] \arrow[l, hook'] \\
B_{i-1}                         & \sigma_j*\Dd(D_Y) \arrow[r, Rightarrow,no head] \arrow[l, hook']                     & \sigma_j*\Dd(D_Y)           
\end{tikzcd}    
\end{center}
By the inductive hypothesis, the first square commutes. By the compatibility (Lemma \ref{lem:slc}(3)), the second square commutes. Hence the first square in the following diagram commutes.
\begin{center}
\begin{tikzcd}
A_{i-1} \arrow[d, "\phi_{i-1}"] & \partial\sigma_i*\Dd(D_{S(\sigma_i)}^v) \arrow[d, "\id_{\partial\sigma_i}*\Dd(f_i)"] \arrow[r, hook] \arrow[l, hook'] & \sigma_i*\Dd(D_{S(\sigma_i)}^v) \arrow[d, "\id_{\sigma_i}*\Dd(f_i)"] \\
B_{i-1}                         & \partial\sigma_i*\Dd(D_Y) \arrow[r,hook] \arrow[l, hook']                                    & \sigma_i*\Dd(D_Y)                                 
\end{tikzcd}    
\end{center}
Since the second square is automatically commutative, we can define $\phi_i$ to be the pushout \[\phi_{i-1}\cup_{\id_{\partial\sigma_i}*\Dd(f_i)}\id_{\sigma_i}*\Dd(f_i):A_{i-1}\cup_{\partial\sigma_i*\Dd(D^v_{S(\sigma_i)})}\sigma_i*\Dd(D^v_{S(\sigma_i)}) \to B_{i-1}\cup_{\partial\sigma_i*\Dd(D_Y)}\sigma_i*\Dd(D_Y).\]
Since $\phi_{i-1}$ is a homotopy equivalence (by the inductive hypothesis) and $\Dd(f_i)$ is a homotopy equivalence,  $\phi_i:A_i\to B_i$ is a homotopy equivalence. By the definition of pushout, the commutativity of the diagram in claim (4) is clear.
 \end{proof}

\begin{prop}\label{prop:basechange}
In Setting \ref{setting}. Let $S\subset X$ be a minimal horizontal lc center and let $q\circ f_S:S\to Z \to Y$ be the Stein factorization of $f|_{S}$. Then $q$ is Galois and is \'etale over $Y\setminus D_Y$. Let $X_Z$ be the normalization of $X\times_{Y}Z$, let $p:X_Z\to X$ be the induced finite cover, let $f_Z:X_Z\to Z$ be the induced fibration, and let $D_{X_Z}$ be the divisor on $X_Z$ such that $K_{X_Z}+D_{X_Z}=p^*(K_X+D_X)$ holds. Then $(X_Z,D_{X_Z})$ is qdlt, $f_Z^{-1}D_Z\subset D_{X_Z}^{=1}$, and $f_Z|_T$ has connected fibers for any horizontal lc centers $T$ of $(X_Z,D_{X_Z})$.
\end{prop}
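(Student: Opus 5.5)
The plan has four steps: the Galois/étale property of $q$, the qdlt property of the cover, the containment $f_Z^{-1}D_Z\subset D_{X_Z}^{=1}$, and connectedness of fibers on horizontal lc centers. The main tool is Koll\'ar's source-spring theory \cite[Section 4.5]{Kollar2013sing} applied to the fibration $f:(X,D_X)\to Y$. Over $Y^0:=Y\setminus D_Y$, the pair is locally stable (Proposition \ref{prop:tdlt-morphism}(2)). A minimal horizontal lc center $S$ is a \emph{source}, and the Stein factorization $Z\to Y$ of $f|_S$ is the \emph{spring}. By \cite[Theorem-Definition 4.45]{Kollar2013sing}, the spring does not depend on the choice of minimal horizontal lc center. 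Moreover, $S\to Z$ is a fibration whose generic fiber, with its different, is klt Calabi-Yau, and $Z\to Y$ is Galois over the generic point of $Y$, since all minimal lc centers of the geometric generic fiber are $\Pp^1$-linked and hence conjugate under the monodromy.

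\textbf{Step 1: $q$ is Galois and \'etale over $Y^0$.} I would apply \cite[4.45]{Kollar2013sing} over $Y^0$, where $f$ is locally stable. There the spring of a locally stable morphism is \'etale over the base (the generic-fiber spring extends because minimal lc centers of fibers deform in families), and the Galois property follows from the generic point. Since $Z$ is normal and finite over $Y$, it is the normalization of $Y$ in $K(Z)$. Hence $q$ is Galois, \'etale over $Y^0$, and by Lemma \ref{lem:toric}(1) $(Z,D_Z:=q^{-1}D_Y)$ is again a toric compactification of a quasi-abelian variety. Its cones are simplicial, because $\Sigma(q)$ is bijective on cones (Proposition \ref{prop:tdlt-basic}(4)).

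\textbf{Step 2: the cover $(X_Z,D_{X_Z})$.} Since $q$ is \'etale over $Y^0$, the map $p$ is \'etale over $f^{-1}Y^0\supset X\setminus D_X^{=1}$ (using $f^{-1}D_Y\subset D_X^{=1}$). By Hurwitz, $D_{X_Z}=p^{-1}D_X^{=1}+p^*D_X^{<1}$. Proposition \ref{prop:tdlt-basic}(4), applied to the dlt and hence qdlt pair $(X,D_X)$, then shows that $(X_Z,D_{X_Z})$ is qdlt. The containment follows directly: $f_Z^{-1}D_Z=p^{-1}f^{-1}D_Y\subset p^{-1}D_X^{=1}=D_{X_Z}^{=1}$.

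\textbf{Step 3: connected fibers on horizontal lc centers.} This is the main obstacle.
\begin{enumerate}
\item First take a minimal horizontal lc center $T$ of $(X_Z,D_{X_Z})$. Then $p(T)$ is a minimal horizontal lc center of $(X,D_X)$ (lc centers map to lc centers under crepant finite maps, as in Proposition \ref{prop:tdlt-basic}(4)), so $p(T)$ has spring $Z$. The map $T\to p(T)$ factors through the normalized fiber product $p(T)\times_Y Z$. Since $p(T)\to Y$ already factors through $Z$, this fiber product splits into $\deg q$ copies of $p(T)$, indexed by $\Gal(q)$. So $T$ is birational to $p(T)$ over $Z$, and $f_Z|_T$ is identified with the Stein factorization map $p(T)\to Z$, which has connected fibers.
\item For an arbitrary horizontal lc center $T'$, choose a minimal horizontal lc center $T\subset T'$. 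Then $f_Z|_{T'}$ is surjective onto $Z$, and its Stein factorization $Z'\to Z$ is finite. The subvariety $T\to Z$ factors through $Z'$ and has connected fibers onto $Z$, which forces $Z'\to Z$ to be birational, hence an isomorphism since $Z$ is normal.
\end{enumerate}
The delicate points are justifying the splitting of the normalized fiber product and the \'etaleness claim in Step 1. For the latter I would fall back on the description of the spring via the Galois action on the set of minimal lc centers of fibers over $Y^0$, using local stability there.
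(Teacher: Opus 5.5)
Steps 2 and 3 of your proposal are sound, but Step 1 has a genuine gap: you assert that $q$ is \'etale over $Y\setminus D_Y$ and do not prove it. ``The spring of a locally stable morphism is \'etale over the base'' is exactly the nontrivial content of this step. Neither of your justifications excludes the relevant degeneration: that minimal lc centers of fibers ``deform in families'', or the fallback via the Galois action on minimal lc centers of fibers over $Y\setminus D_Y$. For general $y$, the fiber $S\cap f^{-1}(y)$ has $\deg q$ connected components, namely the fibers of $f_S$ over the points of $q^{-1}(y)$. Ramification of $q$ over $y$ means precisely that some of these components merge. A deformation or monodromy argument cannot forbid this without an input that uses the singularities of the pair along the special fiber.

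The missing input, which is the paper's argument, runs as follows.
\begin{itemize}
\item Let $P\subset Y$ be a prime divisor with $P\not\subset D_Y$. By local stability over $Y\setminus D_Y$ (Proposition \ref{prop:tdlt-morphism}(2)), $(X,D_X+f^*P)$ is lc over the generic point of $P$.
\item Since $S$ is horizontal, it is still an lc center of this pair. Adjunction then shows that $(S,D_S+(f|_S)^*P)$, with $D_S=\Diff^*_SD_X$, is lc over the generic point of $P$. In particular $(f|_S)^*P$ is reduced there.
\item Suppose $q$ had ramification index $e\geq 2$ along a prime $P'\subset Z$ lying over $P$. Then $(f|_S)^*P=f_S^*q^*P\geq e\,f_S^*P'$, and $f_S^*P'\neq 0$ because $f_S$ is surjective. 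This contradicts reducedness, so $q$ is \'etale over every codimension-one point of $Y\setminus D_Y\cong G$.
\item Since $G$ is smooth, Zariski--Nagata purity of the branch locus gives \'etaleness over all of $Y\setminus D_Y$.
\end{itemize}

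With this in place, the rest of your proposal goes through. Step 2 is the paper's use of Proposition \ref{prop:tdlt-basic}(4) and Lemma \ref{lem:toric}(1), and you also check the containment $f_Z^{-1}D_Z\subset D_{X_Z}^{=1}$, which the paper leaves implicit. In Step 3 your explicit arguments replace the paper's ``by construction'' and its citation of \cite[Theorem-Definition 4.45 (2)]{Kollar2013sing}. These arguments are the generic splitting of $p(T)\times_YZ$ into $\deg q$ copies of $p(T)$, and the observation that a connected-fibered $T\subset T'$ forces the Stein factorization of $T'\to Z$ to be trivial. In part (1) you still need to justify that $p(T)$ is minimal among horizontal lc centers. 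If $S''\subsetneq p(T)$ were a horizontal lc center, then a component of $p^{-1}(S'')\cap T$ dominating $S''$ would be a horizontal lc center of $(X_Z,D_{X_Z})$ strictly contained in $T$, contradicting the minimality of $T$.
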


\begin{proof}

By \cite[Theorem–Definition 4.45 (5)]{Kollar2013sing}, $q$ is Galois. We show that $q$ is \'etale over $Y\setminus D_Y$. Let $P\subset Y$ be a prime divisor with $P\not\subset D_Y$. Then $(X,D_X+f^*P)$ is lc over the generic point of $P$. Let $D_S=\Diff^*_SD_X$. By adjunction, $(S,D_S+f^*P|_{S})$ is lc over the generic point of $P$. In particular, $f^*P|_S=(f|_S)^*P$ is reduced. Hence $q:Z\to Y$ is \'etale over the generic point of $P$. By the purity of the branch locus, $q$ is \'etale over $Y\setminus D_Y$. 

Since $q$ is \'etale over $Y\setminus D_Y$, $p$ is \'etale over $X\setminus D_X^{=1}$. Hence by Proposition \ref{prop:tdlt-basic} (4), $(X_Z,D_{X_Z})$ is qdlt and by Lemma \ref{lem:toric} (1), $(Z,D_Z)$ is a toric compactification of a quasi-abelian variety.

By construction, $f_Z|_{S'}$ has connected fibers for a minimal horizontal lc center $S'$ of $(X_Z,D_{X_Z})$. Hence, by \cite[Theorem–Definition 4.45 (2)]{Kollar2013sing}, $f_Z|_{T}$ has connected fibers for any horizontal lc center $T$.
\end{proof}

\begin{rem}[On coregularity and complexity]\label{rem}
Let $(X,D_X)$ be a projective lc Calabi-Yau pair, let $X^0$ be its klt locus, and let $\alpha:X^0\to G$ be the quasi-Albanese morphism. Let $A$ be the abelian part of $G$. We define the coregularity \[\coreg(X,D_X):=\dim X-\dim\DMR(X,D_X)-1,\] and the complexity \[c(X,D_X)=\dim X-|D_X^{=1}|+\rank\la D_X^{=1}\ra,\]
 where $|D_X^{=1}|$ is the number of components of $D_X^{=1}$, and $\rank\la D_X^{=1}\ra$ is the rank of the subgroup generated by the components of $D_X^{=1}$ in the group of Weil divisors modulo algebraic equivalence. Note that we have adopted a definition slightly different from that in \cite{Brown2018GeomToric}. If $(X,D_X)$ is dlt, then $\coreg(X,D_X)$ is just the dimension of its minimal lc centers. By \cite[Lemma 1.5]{zhu2025quasi}, $c(X,D_X)=\dim X-\dim G+\dim A$. 
 
We claim that \[ c(X,D_X)\geq\coreg(X,D_X),\] and the equality holds iff $\alpha$ is proper. Indeed, let $g:(Z,D_Z)\to\Pp_A$ be the fibration given by Theorem \ref{thm:zhu3}. If $D_Z^{=1}$ is vertical, then by Proposition \ref{prop:vertical}, $c(Z,D_Z)=\coreg(Z,D_Z)$. If $D_Z^{=1}$ is not vertical, then an induction on dimension to a horizontal lc center $S$ shows that \[\coreg(Z,D_Z)=\coreg(S,D_S)\leq\dim S-\dim G+\dim A<\dim X-\dim G+\dim A=c(X,D_X).\]  Since $\coreg(X,D_X)=\coreg(Z,D_Z)$, $c(X,D_X)=c(Z,D_Z)$ (\cite[Lemma 3.8]{zhu2025quasi}) and $\alpha$ is proper iff $D_Z^{=1}$ is vertical, the claim follows. See \cite[Theorem 1.4]{mauri2025birational} for a similar result. Hence, Theorem \ref{thm1} describes Calabi-Yau pairs with $\coreg(X,D_X)=c(X,D_X)$.

There is another inequality. Let $X\dashrightarrow Z$ be the MRC fibration \cite[IV.5.2]{KollarRationalCurves}. Then any lc centers of $(X,D_X)$ dominate $Z$ \cite[Proposition 19]{KollarXu2016dual}. Hence, \[c(X,D_X)\geq\coreg(X,D_X)\geq \dim Z.\] See \cite[Theorem 1.6]{Brown2018GeomToric} for a similar result.
\end{rem}

\section{Proof of the main theorems}

\begin{proof}[Proof of theorem \ref{thm1}]
This follows from Theorem \ref{thm:zhu1} and Theorem \ref{thm:zhu3}.  
\end{proof}

\begin{proof}[Proof of Corollary \ref{cor}]
By \cite[Theorem 1.2 and Theorem 1.3 (2)]{zhu2025quasi}, $\alpha:X^0\to G$ is flat with irreducible general fibers. Since $\dim X^0=\dim G$, $\alpha$ is quasi-finite birational. Since $G$ is normal, Zariski's main theorem implies that $\alpha$ is an open embedding. By \cite[Theorem 1.3 (1)]{zhu2025quasi}, $\codim G\setminus\alpha(X^0)\geq 2$. Hence $\alpha$ is proper in codimension one. Hence, by Theorem \ref{thm1}, $\alpha$ is in fact an isomorphism and $(X,D_X)$ is a toric compactification of $G$.
\end{proof}

\begin{rem}
In the setting of Corollary \ref{cor}, when $D_X$ is reduced, to show that $\alpha$ is proper in codimension one, one can also use \cite{mendes2023footnote}. 
\end{rem}

\begin{proof}[Proof of Theorem \ref{thm2}]
This follows from Theorem \ref{thm:zhu1}, Theorem \ref{thm:zhu3}, Proposition \ref{prop:connected} and Proposition \ref{prop:basechange}.        
\end{proof}



\bibliographystyle{alpha}

\bibliography{bibfile}

\end{document}